\documentclass[11pt]{amsart}

\usepackage[T1]{fontenc}
\usepackage{amsmath,amsthm,amssymb,mathtools}
\usepackage{palatino}
\usepackage{hyperref}
\usepackage{bm}
\usepackage{xcolor}

\usepackage{parskip}
\usepackage{cite}
\usepackage{comment}
\usepackage{setspace}
\usepackage{lipsum}
\usepackage{enumitem} 

\DeclareMathAlphabet{\mathcal}{OMS}{cmsy}{m}{n}

\hypersetup{
  colorlinks=true,
  linktoc=all,
  linkcolor=blue,
  citecolor=red, 
  urlcolor=blue,
  pdftitle={Pinned configuration sets and slicing theorems via Fourier integral operators},
  pdfsubject={Pinned configurations, slicing, Fourier integral operators, restricted projections}
}

\makeatletter
\renewcommand\subsection{\@startsection{subsection}{2}{0pt}%
  {1ex}{0.5ex}{\normalfont\normalsize\bfseries}}
\makeatother
\numberwithin{equation}{section}
\allowdisplaybreaks
\hypersetup{unicode, psdextra}
\newcommand{\noopsort}[1]{}

\theoremstyle{plain}
\newtheorem{theorem}{Theorem}[section]
\newtheorem{proposition}[theorem]{Proposition}
\newtheorem{lemma}[theorem]{Lemma}
\newtheorem{corollary}[theorem]{Corollary}
\newtheorem*{conjecture*}{Conjecture}

\theoremstyle{definition}
\newtheorem{definition}[theorem]{Definition}
\newtheorem{example}[theorem]{Example}
\theoremstyle{remark}
\newtheorem{remark}[theorem]{Remark}
\newtheorem*{remark*}{Remark}

\newcommand{\R}{\mathbb{R}}

\newcommand{\e}{\varepsilon}
\newcommand{\bt}{\mathbf{t}}

\newcommand{\calR}{\mathcal{R}}
\newcommand{\calL}{\mathcal{L}}

\newcommand{\calS}{\mathcal{S}}

\newcommand{\bzero}{\mathbf{0}}
\DeclareMathOperator{\Int}{Int}
\DeclareMathOperator{\supp}{supp}
\DeclareMathOperator{\rank}{rank}
\DeclareMathOperator{\Ker}{Ker}
\DeclareMathOperator{\corank}{corank}
\newcommand{\Bphi}{\mathbf B_{\Phi}}
\newcommand{\Cphi}{\mathbf C_{\Phi}}
\newcommand{\Tphi}{\mathbf T_{\Phi}}
\newcommand{\calG}{\mathcal{G}}

\title{Pinned configuration sets and slicing theorems via Fourier integral operators}
\author{A. Greenleaf}
\address{Department of Mathematics, University of Rochester, NY 14627, USA}
\email{allan@math.rochester.edu}
 \author{E. A. Palsson}
\address{Department of Mathematics, Virginia Tech, Blacksburg, VA 24061, USA} \email{palsson@vt.edu}
 \author{M-Q. Pham}
\address{Department of Mathematics, Virginia Tech, Blacksburg, VA 24061, USA} 
\email{mqpham@vt.edu}
\author{G. Psaromiligkos}
\address{Department of Mathematics, University of Thessaly, Lamia, Greece}
\email{gpsaromiligkos@uth.gr}
\date{\today}
\subjclass[2020]{28A75, 58J40, 42B10}
\keywords{Pinned configurations, Hausdorff dimension, generalized Radon transforms, Fourier integral operators}

\begin{document}
\thispagestyle{empty}

\begin{abstract} 
Let $X\subset\R^{d_X}$ and $Y\subset\R^{d_Y}$ be open, where $d_X,d_Y\geq2$ and $1\leq p\leq\min\{d_X,d_Y\}$. Let $\Phi:X\times Y\to\mathbb R^p$ be a smooth mapping, and suppose that $D_x\Phi$ and $D_y\Phi$ have rank $p$.
For compact sets $A\subset X$ and $B\subset Y$, we give lower bounds on $\dim_H A+\dim_H B$ ensuring that the pinned configuration set
\[\Delta_\Phi^y(A)=\{\Phi(x,y):x\in A\}\]
has positive Lebesgue measure or nonempty interior for some $y\in B$. 
The dimensional thresholds are expressed in terms of uniform $L^2$-Sobolev estimates for the generalized Radon transforms associated with $\Phi$. Our results extend the Fourier integral operator approach developed by Greenleaf, Iosevich, and Taylor \cite{GreenleafIosevichTaylor2021} for unpinned $2$-point configuration problems to the pinned setting.

We also prove a nonlinear projection theorem that remains valid even when the Peres-Schlag transversality condition fails. Under the smoothing hypotheses used for the pinned configuration theorem, we obtain lower bounds for the Hausdorff dimensions of the corresponding slices of $A$ by the level sets of $\Phi$. 
Applications include distances from points to hyperplanes,
spheres and lines; multiparameter distance sets; and a restricted
projection family introduced by Harris.
\end{abstract}

\maketitle

\section{Introduction}
\subsection{Pinned $2$-point configuration sets}
Let $d_X,d_Y\geq  2$ and $1\leq  p\leq \min\{d_X,d_Y\}$. Let $X\subset \R^{d_X}$, $Y\subset \R^{d_Y}$ be open sets. Consider a smooth map $\Phi:X\times Y\to\R^p$ such that $D_x\Phi$ and $D_y\Phi$ both have rank $p$ everywhere. Given a compact set $A\subset X$ and a point $y\in Y$, define the pinned $\Phi$-\textit{configuration set} by
\begin{align*}
    \Delta_{\Phi}^y(A):=\{ \Phi(x,y): x\in A\}\subset \R^p.
\end{align*}
Given a compact set $B\subset Y$, we find dimensional conditions on $A$ and $B$ ensuring that $\Delta_{\Phi}^y(A)$ has positive Lebesgue measure, or even has nonempty interior, for some $y\in B$.

This question is a variant of the Falconer distance problem, a classical problem in geometric measure theory and harmonic analysis.
In 1985, Falconer \cite{Falconer85} introduced the distance set problem, showing that if a compact set $A\subset\R^d$, $d\geq  2$, has Hausdorff dimension $\dim_H A>\frac{d+1}{2}$, then its
\textit{distance set}
\begin{align*}
    \Delta(A):=\left\{\, \vert x-y\vert: x,y\in A\right\}
\end{align*}
has positive Lebesgue measure. In addition, he conjectured that the dimensional threshold should be $\frac{d}{2}$, for $d\geq  2$. The exponent $\frac{d}{2}$ is sharp as a possible general threshold. Over the past several decades, a variety of analytic and geometric techniques have been developed to lower the dimensional threshold that is sufficient for the distance set $\Delta(A)$ to have positive Lebesgue measure. 
Previously established sufficient thresholds include $\dim_HA>5/4$
when $d=2$, proved by Guth, Iosevich, Ou, and Wang
\cite{Guth_Iosevich_Ou_Wang_2019}, and
$\dim_HA>d/2+1/4-1/(8d+4)$ when $d\geq3$, proved by Du, Ou,
Ren, and Zhang \cite{Duetal23a,Duetal23b}. Very recently, OpenAI posted a preprint claiming a proof of the Falconer distance conjecture
in all dimensions \cite{OpAI26}.

More generally, one can ask Falconer type questions for other geometric configurations or smooth functions. For a smooth map $\Phi:X\times Y\to\R^p$, and for compact subsets $A\subset X$ and $B\subset Y$, we can ask under what nondegeneracy assumptions on $\Phi$ there is a threshold $s_0=s_0(\Phi)$ such that if $\dim_HA+\dim_HB>s_0$, then the $\Phi$-configuration set
\begin{align*}
     \Delta_\Phi(A,B):=\{ \Phi(x,y): x\in A, y\in B\}\subset \R^p
\end{align*}
has positive Lebesgue measure. These questions and their generalizations to $k$-point configurations have been studied extensively, e.g., in \cite{ESWARATHASAN2011,Greenleaf_Iosevich_2012,Grafakos_Greenleaf_Iosevich_Palsson_15,Greenleaf_Iosevich_Liu_Palsson_2015,Greenleaf_Iosevich_Mourgoglou_2015,Bennett_Iosevich_Taylor_2016,Iosevich_Taylor_2019,Iosevich_Liu_pinned_distance_2019,Palsson_Acosta_2023,Palsson_Acosta_2025,Iosevich_Pham_Pham_Shen_2025,Shmerkin_Yavicoli_2025}, with more recent contributions including \cite{Borges_etal_2026a,Borges_etal_2026_b,Borges_etal_2026_c,Pham_Falconerfunction_2026,Pham_FIO_2026,Pham_Shen_Xue_2026}; see also the references therein. 

Another variant of the Falconer distance problem is the Mattila-Sj{\"o}lin type problem, which asks for a threshold $r_0(\Phi)>0$ such that $\dim_HA+\dim_HB>r_0$ implies $\Delta_\Phi(A,B)$ contains a nonempty open set, that is, has nonempty interior. These types of questions are motivated by the work of Mattila and Sj{\"o}lin \cite{MattilaSjolin99} for distance sets. In a series of three papers, Greenleaf, Iosevich, and Taylor \cite{GreenleafIosevichTaylor2021,GreenleafIosevichTaylor22,GreenleafIosevichTaylor2024} developed a framework to study Mattila--Sj{\"o}lin type problems for a general class of $k$-point configurations using Fourier integral operator (FIO) techniques. More precisely Greenleaf, Iosevich, and Taylor \cite{GreenleafIosevichTaylor2021} proved an unpinned nonempty interior theorem for $2$-point configuration sets, where the threshold $r_0=r_0(\Phi)$ can be expressed in terms of $d_X,d_Y, p$ and $\alpha_\Phi$, the amount of smoothing on $L^2$-based Sobolev spaces satisfied by the family of generalized Radon transforms $\mathcal{R}_{\bt}$ defined by $\Phi$. For recent Mattila--Sj{\"o}lin type results for other geometric configurations, see, e.g. \cite{ Koh_Pham_Shen_2022,Palsson_Acosta_2023,GreenleafIosevichTaylor2024,Palsson_Acosta_2025,Borges_etal_nonempty_interior_2026,Palsson_Psaromiligkos_2026}.

In this paper, we apply the FIO framework of \cite{GreenleafIosevichTaylor2021} to study the pinned variants of the Falconer type and Mattila--Sj{\"o}lin type problems for a broad class of smooth configuration maps $\Phi: X\times Y\to \R^p$. The associated family of generalized Radon transforms is defined as follows. Recall that we assume 
\begin{align}\label{eq: rank_condition}
    \rank D_x\Phi=p,\quad \rank D_y\Phi=p.
\end{align}
Thus, for each $\bt\in\R^p$, the incidence set
\[
Z_\bt=\{(x,y)\in X\times Y:\Phi(x,y)=\bt\}
\]
is a smooth submanifold of codimension $p$ (when nonempty).
For the compact sets $A\subset X$ and $B\subset Y$ under
consideration, choose $\chi\in C_0^\infty(X\times Y)$,
$\chi\geq0$, equal to one in a neighborhood of $A\times B$.
Choose a bounded open set $W\subset\R^p$ such that
$\Phi(\supp\chi)\subset W$. We consider the family of
incidence relations $Z_\bt$ for $\bt\in W$.
Define the generalized Radon transform
\begin{align}\label{eq: def_generalized_Radon_trans}
   \mathcal R_{\bt}f(y)= \int_{Z_\bt^y}
f(x)\chi(x,y)\,d\sigma_{y,\bt}(x),
\end{align}
where for each $y\in Y$, $Z_\bt^y:=\{x: (x,y)\in Z_\bt\}\subset X$ is a smooth surface of codimension $p$, and $d\sigma_{y,\bt}$ is the Leray measure on $Z_\bt^y$.

It is known that each $\calR_\bt$ is a Fourier integral operator (FIO), of order $m=\frac{p}{2}-\frac{d_X+d_Y}{4}$, and associated to a canonical relation $\Lambda_\bt$, where $\Lambda_\bt=(N^\ast Z_\bt)^{\prime}\subset T^\ast \R^{d_X}\times T^\ast\R^{d_Y}$, the twisted conormal bundle of $Z_\bt$ (see Section \ref{sec: background} for more details). For our result, we assume the regularity condition on the $\calR_\bt$, that is, there is an absolute smoothing $\alpha_\Phi$, determined by the geometry of the canonical relations $\Lambda_\bt$, 
\[ 0\leq  \alpha_\Phi\leq  \frac{\min\{d_X,d_Y\}-p}{2},\] 
such that, for any $s\in \R$, one has
\begin{align}\label{eq: Rt_smoothing_condition_intro}
    \Vert \calR_{\bt}\Vert_{L_s^2(X)\to L_{s+\alpha_\Phi}^2(Y)}\leq  C_s,
\end{align}
with $C_s$ bounded uniformly as $\bt$ varies over $W$.

Our main result is the following.
\begin{theorem}\label{thm: main_pinned_configuration}
    Let $d_X,d_Y\geq  2$, and $1\leq  p\leq  \min\{d_X,d_Y\}$. Let $X\subset \R^{d_X}$, $Y\subset \R^{d_Y}$ be open sets. Suppose that $\Phi: X\times Y\to \R^p$ is a smooth map that satisfies \eqref{eq: rank_condition} and that the generalized Radon transforms $\calR_\bt$ in \eqref{eq: def_generalized_Radon_trans} satisfy \eqref{eq: Rt_smoothing_condition_intro} with smoothing order $\alpha_\Phi$. 

    Assume that $A\subset X$, $B\subset Y$ are compact sets with $\dim_HA,\dim_HB>0$. Then the following statements hold:
    \begin{itemize}
        \item[(i)] If $\alpha_\Phi\in (0,(\min\{d_{X},d_Y\}-p)/2]$ and $\dim_HA+\dim_HB>d_X+d_Y-2\alpha_\Phi$, then there exists $y\in B$ such that $\calL^p(\Delta_\Phi^y(A))>0$.
        \item[(ii)] If $\alpha_\Phi\in (p/2,(\min\{d_{X},d_Y\}-p)/2]$ and $\dim_HA+\dim_HB>d_X+d_Y-2\alpha_\Phi+p$, then there exists $y\in B$ such that $\Delta_\Phi^y(A)$ has nonempty interior. 
    \end{itemize}
\end{theorem}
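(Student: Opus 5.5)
We use Frostman measures and the pushforward of $\mu_A$ under the pinned map. Choose $s_A<\dim_H A$ and $s_B<\dim_H B$ with $s_A+s_B>d_X+d_Y-2\alpha_\Phi$ (respectively $>d_X+d_Y-2\alpha_\Phi+p$ in case (ii)). Take Frostman probability measures $\mu$ on $A$ and $\nu$ on $B$ with finite energies. Then $\mu\in L^2_{-\gamma}(\R^{d_X})$ for any $\gamma>(d_X-s_A)/2$, and $\nu\in L^2_{-\delta}(\R^{d_Y})$ for any $\delta>(d_Y-s_B)/2$. For each $y$ let $\mu^y:=(\Phi(\cdot,y))_\ast(\chi(\cdot,y)\mu)$, a measure on $\R^p$ supported in $\Delta^y_\Phi(A)$. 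By the rank condition on $D_x\Phi$ and the coarea formula, pairing $\mu^y$ with a test function $g$ gives
\[
\int g\,d\mu^y=\int_W g(\bt)\,\calR_\bt\mu(y)\,d\bt ,
\]
so formally $\mu^y(\bt)=\calR_\bt\mu(y)$. The plan is to show that for $\nu$-a.e. $y$ the density $\bt\mapsto\calR_\bt\mu(y)$ lies in $L^2(W)$ in case (i), and is continuous in $\bt$ in case (ii). In case (i) this gives $\calL^p(\Delta^y_\Phi(A))>0$, since $\mu^y\neq0$ for $\nu$-many $y$: we have $\int\mu^y(\R^p)\,d\nu=\iint\chi\,d\mu\,d\nu>0$. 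In case (ii), a continuous nonzero density supported in the compact set $\Delta^y_\Phi(A)$ forces nonempty interior.

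For (i) it suffices to bound
\[
\int\!\!\int_W|\calR_\bt\mu(y)|^2\,d\bt\,d\nu(y)=\int_W\langle |\calR_\bt\mu|^2,\nu\rangle\,d\bt .
\]
We mollify $\mu$ to $\mu_\e$ first, to justify the computation, and obtain bounds uniform in $\e$. By \eqref{eq: Rt_smoothing_condition_intro}, $\calR_\bt\mu\in L^2_{\alpha_\Phi-\gamma}$ uniformly in $\bt$. We want to pair $|\calR_\bt\mu|^2$ against $\nu\in L^2_{-\delta}$. The clean route is to write the quantity as $\int_W\langle\calR_\bt\mu,(\calR_\bt\mu)\nu\rangle d\bt$. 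Alternatively, and preferably, expand the square so that the quantity becomes $\langle \mathcal{T}(\mu\otimes\mu),\nu\rangle$, where
\[
\mathcal T(\mu\otimes\mu)(y)=\int_W\calR_\bt\mu(y)\,\overline{\calR_\bt\mu(y)}\,d\bt .
\]
This is the approach of \cite{GreenleafIosevichTaylor2021} adapted to the pinned setting. In that paper one instead integrates $\mu\times\nu$ against $\calR_\bt$ to get the unpinned density. Here we need one more $\nu$-integration, so we instead bound $\int_W\|\calR_\bt\mu\|_{L^2(\nu)}^2d\bt$.

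Our concrete choice for that bound is the trace-type estimate $\|F\|_{L^2(\nu)}\lesssim\|F\|_{L^2_{(d_Y-s_B)/2+}}$, which holds for $s_B$-Frostman $\nu$. This trace bound is easily checked via $|\hat F|^2\ast|\hat\nu|$, using $\int|\hat\nu(\xi-\eta)|\langle\xi\rangle^{-\dots}$. Combined with the smoothing estimate, it requires $\alpha_\Phi-\gamma\geq (d_Y-s_B)/2$. That is $2\alpha_\Phi>d_X-s_A+d_Y-s_B$, which is exactly the hypothesis of (i). Uniformity of the constants in $\bt\in W$, together with boundedness of $W$, gives the integrability in $\bt$. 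Fatou's lemma then passes $\e\to0$ and yields an $L^2$ density for $\nu$-a.e. $y$.

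For (ii) we upgrade $L^2$ in $\bt$ to $H^{p/2+}$ in $\bt$, which embeds into continuous functions on $\R^p$. Derivatives $\partial_\bt$ of $\calR_\bt$ are FIOs with the same canonical relation structure and order raised by one per derivative. We therefore expect $\|\partial_\bt^k\calR_\bt\|_{L^2_s\to L^2_{s+\alpha_\Phi-k}}\le C$, fractionally interpolated to $k=p/2+\e'$. Running the argument of (i) with $\alpha_\Phi$ replaced by $\alpha_\Phi-p/2$ yields $\bt\mapsto\calR_\bt\mu(y)\in H^{p/2+}(\R^p)$ for $\nu$-a.e. $y$, under $s_A+s_B>d_X+d_Y-2\alpha_\Phi+p$; here $\alpha_\Phi>p/2$ keeps the loss admissible. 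The main obstacle is this step. Justifying that $\bt$-derivatives cost exactly one order each, uniformly in $\bt$ (i.e. that the family smoothing hypothesis propagates to $\partial_\bt$), should follow from writing $\calR_\bt$ as an oscillatory integral $\int e^{i(\Phi(x,y)-\bt)\cdot\tau}\chi\,f\,dx\,d\tau$, where $\partial_\bt$ brings down $\tau$, of size comparable to the frequency. A secondary point is the trace estimate near the endpoint, which is handled by the strict inequalities and the freedom to choose $s_A,s_B$.
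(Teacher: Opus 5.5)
Part (i) of your proposal is the paper's core estimate, and you handle it correctly. You combine a Frostman trace bound on $Y$ with the smoothing hypothesis at each fixed $\bt$, uniformly in $\bt\in W$, and integrate over the bounded set $W$. That is how the paper bounds $\int\!\int|\calR_\bt\mu(y)|^2\,d\bt\,d\lambda(y)$: frequency-localized in the proof of Proposition~\ref{prop: pinned_configuration}, globally in Lemma~\ref{lem: slice_measure_energy}. Your mollification/Fatou justification is fine.

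One correction in (i). The sharp trace inequality $\|F\|_{L^2(\nu)}\lesssim\|F\|_{L^2_{(d_Y-s_B)/2+\delta}}$ does not follow from a Schur-type bound using only $|\widehat\nu|$; that route loses about $d_Y/2$ derivatives. It comes from applying the Frostman bound $\nu(B(y,2^{-j}))\lesssim 2^{-js_B}$ to Littlewood--Paley pieces, which are essentially constant at scale $2^{-j}$ (Lemma~\ref{lem: Frostman_LP}(ii)), and then summing in $j$.

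The genuine gap is in (ii). The hypothesis \eqref{eq: Rt_smoothing_condition_intro} concerns only $\calR_\bt$ with the fixed amplitude $\chi$. Differentiating the oscillatory integral in $\bt$ produces an FIO with amplitude $\tau\chi$, a different operator about which the hypothesis says nothing. So $\|\partial_\bt^k\calR_\bt\|_{L^2_s\to L^2_{s+\alpha_\Phi-k}}\lesssim 1$ does not follow as written. To complete your route you would need three things.
\begin{enumerate}
\item[(a)] Use $\rank D_x\Phi=p$ to choose vector fields $V_j=\sum_l v_{jl}(x,y)\partial_{x_l}$ with $V_j\Phi_k=\delta_{jk}$, and integrate by parts in $x$. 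This gives
\[
\partial_{t_j}\calR_\bt f=\sum_{l}\calR_\bt^{(v_{jl}\chi)}(\partial_{x_l}f)+\calR_\bt^{(\operatorname{div}_x(v_j\chi))}f,
\]
where $\calR_\bt^{(a)}$ denotes the generalized Radon transform with amplitude $a$ in place of $\chi$.
\item[(b)] Know the smoothing estimate, uniformly in $\bt$, for these new amplitudes and their iterates. This holds when $\alpha_\Phi$ comes from the geometry of $\Lambda_\bt$, as in Theorems~\ref{thm: FIO_standard} and \ref{thm:one-sided-fold}, but it is an extra input relative to the hypothesis as stated.
\item[(c)] Handle noninteger $\beta$. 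Since $\beta$ must lie in $(p/2,\eta)$, which in general contains no integer, you must interpolate the single map $f\mapsto[(y,\bt)\mapsto\calR_\bt f(y)]$ from $L^2_{r-\alpha_\Phi+k}(X)$ into $L^2(\nu;H^k(\R^p))$ between integers $k$, with $r=(d_Y-s_B)/2+\delta$. A fractional $\partial_\bt^{p/2+\e'}$ at a fixed $\bt$ is not meaningful, since it is nonlocal in $\bt$.
\end{enumerate}

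The paper avoids (a)--(c) entirely. It decomposes $\mu=\sum_j\mu_j$ into Littlewood--Paley pieces and takes the Fourier transform in $\bt$. This gives $\widehat{g_j}(y,\tau)=\int K(y,\xi,\tau)\widehat{\mu_j}(\xi)\,d\xi$ with $K(y,\xi,\tau)=\int e^{2\pi i(x\cdot\xi-\tau\cdot\Phi(x,y))}\chi(x,y)\,dx$. Because $|D_x\Phi^T\tau|\geq c|\tau|$ and $\widehat{\mu_j}$ lives where $|\xi|\leq 2^{j+1}$, the phase is nonstationary when $|\tau|>C_12^j$. That region therefore decays rapidly, using only the rank condition. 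On $|\tau|\leq C_12^j$ the paper bounds $\langle\tau\rangle^{2\beta}\lesssim 2^{2j\beta}$ and uses the frequency-localized version of your $L^2(d\lambda\,d\bt)$ estimate from (i). Summing in $j$ gives $\int\|\nu_y\|_{L^2_\beta}^2\,d\lambda(y)<\infty$ for every $0\leq\beta<\eta$ at once. This uses the hypothesis only for $\calR_\bt$ itself and requires no interpolation. The nonstationary-phase step is exactly the rigorous form of your heuristic that $\tau$ is comparable to the $x$-frequency.
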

\begin{remark}
We remark that the roles of $X$ and $Y$ can be interchanged. Indeed, the
generalized Radon transforms associated with
$\widetilde{\Phi}(y,x)=\Phi(x,y)$ are the adjoints of
$\calR_\bt$ and have the same $L^2$-Sobolev smoothing
order by duality. Thus, Theorem~\ref{thm: main_pinned_configuration}
also applies to
\[
\Delta_\Phi^x(B):=\{\Phi(x,y):y\in B\},
\]
for $x\in A$, with the roles of $A$ and $B$
interchanged in the hypotheses. For example, in the point--hyperplane distance setting of Corollary~\ref{cor:pinned-point-hyperplane},
either the point or the hyperplane may be pinned.
\end{remark}
\begin{remark}
    For scalar configuration maps, Palsson and Psaromiligkos \cite[Proposition 3.1 and Remark 3.3]{Palsson_Psaromiligkos_2026} obtained pinned positive measure and nonempty interior conclusions under smoothing assumptions for generalized Radon transforms. The present paper 
 treats vector-valued configuration maps and establishes an accompanying quantitative slicing theorem, which we will discuss in the next section.
\end{remark}

\begin{remark}
Theorem \ref{thm: main_pinned_configuration} gives a pinned counterpart of the $2$-point configuration set theorem of Greenleaf, Iosevich, and Taylor \cite[Theorem 1.6]{GreenleafIosevichTaylor2021}. To make the distinction precise, write $T_\Phi=d_X+d_Y-2\alpha_\Phi$.
Their theorem gives nonempty interior for the unpinned set $\Delta_\Phi(A,B)$ when $\dim_HA+\dim_HB>T_\Phi$. Theorem \ref{thm: main_pinned_configuration} gives positive Lebesgue measure for the pinned set $\Delta_\Phi^y(A)$, with some $y\in B$, under the same dimensional condition, and nonempty interior for such a pinned set when $\dim_HA+\dim_HB>T_\Phi+p$.
Note that the unpinned interior result on its own does not imply either conclusion for the pinned problem.
\end{remark}
When $\Phi: \R^d\times \R^d\to \R$ is the distance function, the pinned distance set problem was first studied by Peres and Schlag \cite{Peres_Schlag_2000}, who considered a very large class of functions $\Phi(x,y)$ satisfying transversality conditions that generalize orthogonal projections $P_e(x)=x\cdot e$, $e\in S^{d-1}$. The Euclidean distance function belongs to this class, and Peres and Schlag \cite{Peres_Schlag_2000} obtained a sufficient threshold $\dim_HA>\frac{d}{2}+\frac{1}{2}$ guaranteeing that there exists a point $y\in A$ for which the pinned distance set at $y$,
\begin{align*}
    \Delta^y(A):=\{|x-y|: x\in A\},
\end{align*}
has positive Lebesgue measure. Some pinned distance estimates also control exceptional pins relative
to Frostman measures on $A$; see, for example,
\cite{Iosevich_Liu_pinned_distance_2019}.
Furthermore, in this pinned setting, Peres and Schlag \cite{Peres_Schlag_2000} also showed that if $\dim_HA>\frac{d}{2}+1$, then there exists a point $y\in A$ such that the pinned distance set $\Delta^y(A)$ has nonempty interior. However, when $d=2$, the condition in \cite{Peres_Schlag_2000} is vacuous and provides no information for the nonempty interior problem. The first nontrivial threshold for nonempty interior of the pinned distance set in the plane was obtained by Borges, Foster, Ou, and Palsson in \cite{Borges_etal_nonempty_interior_2026}, where the authors showed that if $A\subset \R^2$ with $\dim_HA>\frac{7}{4}$, then there exists $y\in A$ such that $\Delta^y(A)$ has nonempty interior. They also obtained the improved threshold $12/5$ when $d=3$. For $d\geq4$, this was improved by Borges, Ou, and Pasquariello \cite{Borges_etal_2026_c} who obtained the threshold $\frac{d}{2}+\frac{3}{4}+\frac{3}{4(2d-1)}$. The framework in \cite{Borges_etal_nonempty_interior_2026} relies on $L^p\to L^p$ local smoothing inequalities, originating in the work of Iosevich and Liu \cite{Iosevich_Liu_pinned_distance_2019}. Iosevich and Liu developed local smoothing methods to study exceptional pins, recovered the results of Peres and Schlag, and extended them to a variety of metrics satisfying certain curvature conditions. 

\subsection{Nonlinear projections and rank conditions}

For $y\in Y$, write $\Phi_y(x)=\Phi(x,y)$, so that
$\Phi_y(A)=\Delta_\Phi^y(A)$.
Peres and Schlag \cite{Peres_Schlag_2000} obtained projection
theorems under transversality assumptions on the family
$\{\Phi_y\}_{y\in Y}$. Such assumptions may fail even when
the associated generalized Radon transforms satisfy Sobolev
smoothing estimates. We obtain projection estimates under
a rank condition on the augmented mixed Hessian defined below.

For $(x,y)\in X\times Y$ and $\tau\in\R^p\setminus\bzero$, define the
augmented mixed Hessian
\[
\mathbf B_\Phi(x,y,\tau)=
\begin{bmatrix}
0&D_x\Phi(x,y)\\
D_y\Phi(x,y)^T&D_yD_x(\tau\cdot\Phi)(x,y)
\end{bmatrix}.
\]
\begin{samepage}
\begin{theorem}
Let $A\subset X$ be compact, and write $ s=\dim_HA>0$. Put $d_{\min}=\min\{d_X,d_Y\}$.
Suppose that, for some integer $ 2p\leq  r\leq  p+d_{\min}$, one has
\begin{equation*}
 \rank(\Bphi(x,y,\tau))\geq  r
\end{equation*}
for every $(x,y)\in X\times Y$ and every
$\tau\in\R^p\setminus\bzero$. Then the following hold.

\begin{enumerate}
\item If $0<s<p$, then, for every $0<u<s$,
\begin{equation*}
 \dim_H\bigl\{
 y\in Y:\dim_H\Phi_y(A)\leq  u
 \bigr\}
 \leq  d_X+d_Y+p-r-s+u.
\end{equation*}

\item If $s>p$, then
\begin{equation*}
 \dim_H\bigl\{
 y\in Y:\calL^p(\Phi_y(A))=0
 \bigr\}
 \leq  d_X+d_Y+2p-r-s.
\end{equation*}

\item If $s>2p$, then
\begin{equation*}
 \dim_H\bigl\{
 y\in Y:\Int(\Phi_y(A))=\emptyset
 \bigr\}
 \leq  d_X+d_Y+3p-r-s.
\end{equation*}

\item Let $B\subset Y$ be compact and put
\[
 \kappa_r
 =\dim_HA+\dim_HB-d_X-d_Y-2p+r.
\]
If $\kappa_r>0$, then, for every
$0<\e<\kappa_r$, there exists a Frostman probability
measure $\lambda_\e$ supported on $B$ such that, for
$\lambda_\e$-a.e.\ $y$,
\[
 \calL^p\left(
 \left\{\bt\in\R^p:
 \dim_H\bigl(
 A\cap\Phi(\cdot,y)^{-1}(\bt)
 \bigr)\geq \kappa_r-\e
 \right\}\right)>0.
\]
\end{enumerate}
\end{theorem}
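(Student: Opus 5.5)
The plan is to reduce everything to an $L^2$-Sobolev estimate for the generalized Radon transforms $\calR_\bt$, and then run a Peres--Schlag-type energy argument in which the smoothing order is read off from the rank of $\Bphi$. The first step is the microlocal fact that $\calR_\bt$ is an FIO whose canonical relation $\Lambda_\bt=(N^\ast Z_\bt)'$ has projections $\pi_L,\pi_R$ of corank governed by $\Bphi$. A standard computation shows that the rank of $d\pi_L$ (equivalently $d\pi_R$) at $(x,y,\tau\cdot D\Phi)$ is $\rank\Bphi(x,y,\tau)-2p+\min\{d_X,d_Y\}$ on the relevant side. Hence $\rank\Bphi\geq r$ bounds the corank by $p+d_{\min}-r$. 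The Hörmander/Greenleaf--Seeger loss estimate for FIOs with projections of bounded corank $k$ then gives smoothing $\alpha=\frac{d_X+d_Y}{4}-\frac p2-\frac k2$ (order $m$ plus loss $k/2$). After tracking the exponents, this should match the shape of the conclusions, namely $2\alpha=r-2p+\dots$ in each threshold. I would verify this bookkeeping by checking it against the numerology $d_X+d_Y+2p-r$ in (2) and (4), since it is uniform in $\bt\in W$.

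Next, for (1)--(3), take a Frostman measure $\mu$ on $A$ with $I_\gamma(\mu)<\infty$ for $\gamma<s$. Suppose the exceptional set $E$ had dimension exceeding the stated bound; then it carries a Frostman measure $\nu$ of exponent $\beta$ above that bound. Consider the pushforwards $\mu_y=(\Phi_y)_\ast\mu$ and integrate their energies in $y$ against $\nu$:
\[
\int \|\mu_y\|_{H^{\sigma}(\R^p)}^2\,d\nu(y)=\int\!\!\int |\widehat{\mu_y}(\tau)|^2|\tau|^{2\sigma}\,d\tau\,d\nu(y).
\]
Writing $\widehat{\mu_y}(\tau)=\int e^{-i\tau\cdot\Phi(x,y)}d\mu(x)$ and dyadically decomposing in $|\tau|$, the inner quantity on each piece is a pairing $\langle \calR_\bt$-type oscillatory operator $\mu,\nu\rangle$. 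The Sobolev bound of the first step, combined with $\mu\in H^{(\gamma-d_X)/2}$ and $\nu\in H^{(\beta-d_Y)/2}$, gives summability. The three regimes correspond to three choices of $\sigma$. In case (1), taking $\sigma=(u-p)/2$ yields finite $u$-energy and hence $\dim\Phi_y(A)>u$ for $\nu$-a.e.\ $y$. In case (2), $\sigma=0$ gives $\mu_y\in L^2$, so $\calL^p(\Phi_y(A))>0$. In case (3), $\sigma>p/2$ gives a continuous density, hence nonempty interior. Each step up in $\sigma$ costs exactly $p$ in the threshold. Since these conclusions contradict $\nu(E)>0$, the dimension bounds follow. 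The same integration also gives the uniform-in-$\bt$ statement.

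For (4), apply the same estimate with $\nu$ Frostman on $B$ to obtain $\mu_y\in L^2$ for $\nu$-a.e.\ $y$, together with a quantitative bound on the $\e$-thickened slice energies. Following Mattila's slicing argument, define the sliced measures $\mu_{y,\bt}$ as limits of $\mu$ restricted to $\Phi_y^{-1}(B(\bt,\delta))$, normalized by $\delta^{-p}$. Then bound
\[
\int\!\!\int I_{\kappa_r-\e}(\mu_{y,\bt})\,d\bt\,d\nu(y)
\]
by the same Fourier expression, with an extra weight $|\xi|^{\kappa_r-\e-d_X}$ inserted. Positivity of $\int\|\mu_{y,\bt}\|\,d\bt=\|\mu\|$ for a.e.\ $y$ gives a positive-measure set of $\bt$ with nonzero slices of finite energy. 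Choosing $\lambda_\e=\nu$ completes the argument.

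The main obstacle is the first step: identifying $\rank\Bphi\geq r$ with a uniform corank bound on $\pi_L,\pi_R$, and invoking the correct loss-of-derivative theorem. Without this, the smoothing does not hold uniformly in $\bt$ and the exponents do not match. I would first try the known identity $\corank d\pi_L=\corank\Bphi$ from \cite{GreenleafIosevichTaylor2021}, together with the general $L^2$ bound for FIOs whose projections have corank $\le k$, which loses $k/2$ derivatives.
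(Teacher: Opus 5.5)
Your overall architecture matches the paper's proof. You bound the corank of the cotangent projections of $\Lambda_\bt$ by $p+d_{\min}-r$ using $\Bphi$, get a smoothing order from Hörmander's corank estimate (Theorem~\ref{thm: FIO_standard}(ii), whose submersion hypothesis is supplied by the rank conditions on $D_x\Phi$ and $D_y\Phi$), average Sobolev norms of the pushforwards against a Frostman measure on the exceptional set, and then slice. Two things do not go through as written.

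\textbf{Exponent bookkeeping.} The correct computation is $\Ker D\pi_R\cong\Ker\Bphi$ and $\Ker D\pi_L\cong\Ker\Bphi^T$. This gives $\dim\Ker D\pi_R=p+d_X-\rank\Bphi$ and $\dim\Ker D\pi_L=p+d_Y-\rank\Bphi$, so the common corank is $p+d_{\min}-\rank\Bphi$. Your rank formula $\rank\Bphi-2p+d_{\min}$ is wrong, although the corank bound you state is right. More seriously, the loss $q/2$ must be subtracted from the effective order $m_{\mathrm{eff}}=\frac{p-d_{\min}}{2}$, which gives $\alpha_r=-m_{\mathrm{eff}}-\frac{p+d_{\min}-r}{2}=\frac{r-2p}{2}$. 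Your $\frac{d_X+d_Y}{4}-\frac p2-\frac k2$ uses the order $m$ instead and overstates the gain by $\frac{|d_X-d_Y|}{4}$. That is not what Hörmander's theorem gives, and it would not reproduce the stated thresholds when $d_X\neq d_Y$.

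\textbf{The energy step (the genuine gap).} A $\bt$-uniform bound for $\calR_\bt$ controls only one quantity. It must be combined with the Frostman trace inequality $\|h\|_{L^2(d\nu)}\lesssim\|h\|_{L^2_\rho(Y)}$ for $\rho>\frac{d_Y-\beta}{2}$. That inequality, not $\nu\in L^2_{(\beta-d_Y)/2}$, is the right input, because your integral is quadratic in $\mu$ rather than a pairing with $\nu$. Together they control
\[
\int\|\mu_y\|_{L^2(\R^p)}^2\,d\nu(y)=\int_{\R^p}\|\calR_\bt\mu\|_{L^2(d\nu)}^2\,d\bt ,
\]
which is finite when $\eta>0$ and yields part (2), with $\eta$ the paper's quantity $\alpha_r+\frac{\gamma-d_X}{2}+\frac{\beta-d_Y}{2}$.

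This estimate fails for the other two parts:
\begin{itemize}
\item In part (1) one always has $\eta<0$, since $\alpha_r\le\frac{d_{\min}-p}{2}$, $\gamma<p$ and $\beta\le d_Y$. So the $L^2$ bound is never available, and everything rests on a frequency-localized estimate.
\item Part (3) needs control of large $\tau$, which no $\bt$-uniform bound provides.
\end{itemize}

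The missing idea is the frequency matching of Proposition~\ref{prop: pinned_configuration} and Remark~\ref{rem:averaged-sobolev}. Take the Littlewood--Paley pieces $\mu_j=P^X_j\mu$ and set $g_j(y,\bt)=\calR_\bt\mu_j(y)$. Then
\[
\widehat{g_j}(y,\tau)=\int K(y,\xi,\tau)\,\widehat{\mu_j}(\xi)\,d\xi,\qquad K(y,\xi,\tau)=\int e^{2\pi i(x\cdot\xi-\tau\cdot\Phi(x,y))}\chi\,dx .
\]
On $\supp\chi$ one has $c|\tau|\le|D_x\Phi^T\tau|\le C|\tau|$, so nonstationary phase makes $K$ negligible unless $|\tau|\approx 2^j$. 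Consequently:
\begin{itemize}
\item the weight $\langle\tau\rangle^{2\sigma}$ may be replaced by $2^{2j\sigma}$;
\item the trace and Sobolev bounds give $\|g_j\|_{L^2(d\nu\,d\bt)}\lesssim 2^{-j\eta}$;
\item the sum over $j$ therefore converges for every real $\sigma<\eta$.
\end{itemize}

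With this in place, your choices of $\sigma$ agree with the paper's. In part (1), take $I_\zeta$ with $\zeta\in(u,p)$ to get $\dim_H\Phi_y(A)>u$ strictly. Part (4) needs only the $\sigma=0$ bound, applied to the modulated measures $e^{-2\pi i x\cdot\xi}\mu$. One uses $\widehat{\mu_{y,\bt}}(\xi)=\calR_\bt(e^{-2\pi i x\cdot\xi}\mu)(y)$ and a Riesz-kernel convolution as in Lemma~\ref{lem: slice_measure_energy}, so your outline there works once $\alpha_r$ is corrected.
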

\end{samepage}
We also give a criterion for failure of Peres--Schlag transversality
of order zero, in terms of collisions of distinct input points
and limits of secant directions; see
Theorem~\ref{thm:PS-dichotomy}. Thus the rank-based conclusions
can apply to families for which this transversality condition fails.

\subsection{Dimensions of slices}
Let $\{\pi_\omega:\R^d\to\R\}_{\omega\in\Omega}$ be a family
of mappings. For each fixed $\omega$, the sets
$A\cap\pi_\omega^{-1}(t)$, $t\in\R$, are the slices of $A$
determined by $\pi_\omega$. In addition to estimating the size
of the image $\pi_\omega(A)$, one can ask how large these slices
are. A classical result is due to Marstrand.
\begin{samepage}
\begin{theorem}[Marstrand \cite{Marstrand54}]
Suppose that $A\subset \R^2$ is a Borel set, and let $\pi_e(x)=x\cdot e$, for $e\in S^{1}$. Then for almost all $e\in S^1$,
\begin{align}
    \dim_H(\pi_e(A))=\dim_HA&\quad \text{ if }\dim_HA\leq  1,\notag\\
    \calL^1(\pi_e(A))>0& \quad \text{ if }\dim_HA>1.\label{eq: Marstrand_proj}
\end{align}
Moreover, if $s=\dim_H A>1$ and $\mathcal H^s(A)>0$,
then, for a.e.  $e\in S^1$,
\begin{align}\label{eq: Marstrand_slice}
    \calL^1\big(\{t\in \R: \dim_H(\pi_e^{-1}(t)\cap A)=\dim_HA-1\}\big)>0.
\end{align}
\end{theorem}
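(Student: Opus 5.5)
The statement just given is Marstrand's classical theorem, quoted from \cite{Marstrand54}. I will sketch the standard Fourier-analytic route for the projection part and a Mattila-style route for the slice part.

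\textbf{Projection part.} I assume first that $A$ is compact; for Borel sets I pass to compact subsets of positive $\mathcal H^t$-measure via Davies' theorem. For $t<\dim_HA$, Frostman's lemma gives a probability measure $\mu$ on $A$ with $I_t(\mu)=\iint|x-y|^{-t}\,d\mu\,d\mu<\infty$.

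\emph{Case $\dim_HA\le 1$.} Let $\mu_e=(\pi_e)_\#\mu$. Integrating over $e\in S^1$ and using
\[
\int_{S^1}|(x-y)\cdot e|^{-t}\,de=c_t|x-y|^{-t},\qquad t<1,
\]
gives $\int_{S^1} I_t(\mu_e)\,de=c_tI_t(\mu)<\infty$. So $I_t(\mu_e)<\infty$ for a.e.\ $e$, hence $\dim_H\pi_e(A)\ge t$. Taking $t\uparrow\dim_HA$ along a countable sequence gives the lower bound, and the upper bound is automatic since $\pi_e$ is Lipschitz.

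\emph{Case $\dim_HA>1$.} I take $t>1$ and use polar coordinates:
\[
\int_{S^1}\int_\R|\widehat{\mu_e}(r)|^2\,dr\,de=\int_{S^1}\int_\R|\widehat\mu(re)|^2\,dr\,de\approx\int_{\R^2}|\widehat\mu(\xi)|^2|\xi|^{-1}\,d\xi\lesssim I_1(\mu)<\infty .
\]
Hence $\mu_e\in L^2$ for a.e.\ $e$, and therefore $\calL^1(\pi_e(A))>0$.

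\textbf{Slice part.} Here $s=\dim_HA>1$ and $\mathcal H^s(A)>0$. The upper bound $\dim_H(A\cap\pi_e^{-1}(t))\le s-1$ for $\calL^1$-a.e.\ $t$ holds for every $e$, by Eilenberg's inequality $\int^*\mathcal H^{s-1}(A\cap\pi_e^{-1}(t))\,dt\lesssim\mathcal H^s(A)$. The issue is the lower bound on a set of positive measure of $t$. I would use the disintegration of a Frostman measure $\mu$ with $I_u(\mu)<\infty$, $1<u<s$. For a.e.\ $e$ one has $\mu_e\in L^2$, and the sliced measures $\mu_{e,t}$ are defined as weak limits of $\delta^{-1}\mu|_{\{|\pi_e-t|<\delta\}}$, with total mass $\mu_e(t)>0$ on a set of positive $t$-measure. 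The key estimate is
\[
\int_{S^1}\int_\R I_{u-1}(\mu_{e,t})\,dt\,de\lesssim I_u(\mu),
\]
which I would prove by writing $I_{u-1}$ via $|\widehat{\cdot}|^2$ weights and computing the $e,t$ averages in Fourier space, or by the geometric estimate
\[
\int_{S^1}\delta^{-1}\mathbf 1_{|(x-y)\cdot e|<\delta}\,de\lesssim|x-y|^{-1}.
\]
Given this estimate, $I_{u-1}(\mu_{e,t})<\infty$ for a.e.\ $(e,t)$. On the positive-measure set of $t$ where $\mu_{e,t}\ne0$, the slices then have dimension at least $u-1$. Letting $u\uparrow s$ through a countable sequence gives dimension at least $s-1$, which combined with the upper bound yields equality.

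\textbf{Main obstacle.} I expect the hardest step to be justifying the slicing measures rigorously: their existence as limits, their support in $\pi_e^{-1}(t)\cap A$, and the interchange of limits in the energy bound. I would follow Mattila's treatment and use Fatou's lemma on the $\delta$-approximants.
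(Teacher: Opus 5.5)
\textbf{The paper does not prove this statement.} It is quoted from Marstrand. The closest argument in the paper is Lemma~\ref{lem: slice_measure_energy}, a Fourier/FIO version of your averaged slice-energy bound.

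\textbf{What is correct.} Your projection part is the standard energy argument and is correct. Your plan for the sliced measures is also sound: Mattila's construction, the bound $\int_{S^1}\int_\R I_{u-1}(\mu_{e,t})\,dt\,de\lesssim I_u(\mu)$ via $\int_{S^1}\delta^{-1}\mathbf 1_{\{|(x-y)\cdot e|<\delta\}}\,de\lesssim|x-y|^{-1}$, and Fatou.

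\textbf{The gap: letting $u\uparrow s$.} As written, you fix $u$, choose a Frostman measure $\mu$ with $I_u(\mu)<\infty$, and get $\dim_H(A\cap\pi_e^{-1}(t))\ge u-1$ on $T_u=\{t:\mu_e(t)>0\}$. If $\mu$ depends on $u$, so does $T_u$, and $\bigcap_n T_{u_n}$ may be null. All you can then conclude is $\calL^1(\{t:\dim_H(A\cap\pi_e^{-1}(t))\ge s-1-\e\})>0$ for each $\e>0$. That is exactly the $\e$-loss form of Theorem~\ref{thm: pinned_slicing}.

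This is not a technicality. Let $A=\{0\}\cup\bigcup_n A_n$ with $A_n\subset B(c_n,r_n)$ compact, $\dim_HA_n=s_n<s$, $s_n\uparrow s$, $c_n\to0$ and $\sum_nr_n<\infty$. Fix any $e$. By Borel--Cantelli, a.e.\ $t$ lies in only finitely many of the intervals $\pi_e(B(c_n,r_n))$. Applying the upper bound to each $A_n$, for a.e.\ $t$ the dimension $\dim_H(A\cap\pi_e^{-1}(t))$ is at most the maximum of $s_n-1$ over finitely many $n$, hence $<s-1$. So the exact equality fails for every $e$, yet your argument with $u$-dependent measures would run unchanged. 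Here $\mathcal H^s(A)=0$.

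\textbf{The repair.} Use $\mathcal H^s(A)>0$ in the lower bound. Pass to a compact $K\subset A$ with $\mathcal H^s(K)>0$ and take a single Frostman measure with $\mu(B(x,r))\lesssim r^s$. Then $I_u(\mu)<\infty$ for all $u<s$ simultaneously. Now $\mu_{e,t}$ and $T=\{t:\mu_e(t)>0\}$ do not depend on $u$, and you discard only countably many null sets of $(e,t)$, one for each $u_n$.

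\textbf{A smaller slip in the upper bound.} The upper bound needs no lower bound on $\mathcal H^s(A)$, but your version of it needs $\mathcal H^s(A)<\infty$, which is not assumed. If $\mathcal H^s(A)=\infty$, the inequality $\int^*\mathcal H^{s-1}(A\cap\pi_e^{-1}(t))\,dt\lesssim\mathcal H^s(A)$ says nothing. Apply Eilenberg's inequality with $s'>s$ instead: $\mathcal H^{s'}(A)=0$ gives $\mathcal H^{s'-1}(A\cap\pi_e^{-1}(t))=0$ for a.e.\ $t$. Then let $s'\downarrow s$ along a countable sequence.
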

\end{samepage}
    Various generalizations of this result have been obtained over the years. See, for example, \cite{Mattila2015}, and the references therein. In \cite{Peres_Schlag_2000}, Peres and Schlag generalized \eqref{eq: Marstrand_proj} from orthogonal projections to a large class of mappings called generalized projections, satisfying transversality conditions. This class of mappings includes, in particular, Euclidean distances $\pi_y(x)=|x-y|$, $x,y\in \R^d$. Slicing theorems for generalized projections were proved in \cite{Orponen_2014_slicing} and \cite{Bushling_2025_slicing}.

    Using the estimates in the proof of Theorem~\ref{thm: main_pinned_configuration}, we establish a generalization of \eqref{eq: Marstrand_slice} for a large class of smooth mappings including those that fail to satisfy transversality conditions in the sense of Peres and Schlag. 
    \begin{samepage}
\begin{theorem}\label{thm: pinned_slicing}
    Assume that the hypotheses of Theorem \ref{thm: main_pinned_configuration} hold. Suppose that $A\subset X$, $B\subset Y$ are compact sets with
    \begin{align*}
        s_A=\dim_HA, \quad s_B=\dim_HB, \quad \kappa=s_A+s_B- d_X-d_Y+2\alpha_\Phi>0.
    \end{align*}
    Then for every $0<\e<\kappa$, there exists a Frostman probability measure $\lambda_\e$ supported on $B$ such that, for $\lambda_\e$-a.e.  $y\in B$,
    \begin{align*}
        \calL^p\big(\Delta_\Phi^y(A)\big)>0,
    \end{align*}
    and 
    \begin{align*}
        \calL^p\big(\{\bt\in \R^p: \dim_H\{x\in A: \Phi(x,y)=\bt\}\geq  \kappa-\e\}\big)>0.
    \end{align*}
\end{theorem}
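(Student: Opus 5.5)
We follow the $L^2$ pinned strategy behind Theorem~\ref{thm: main_pinned_configuration}. Fix $0<\e<\kappa$ and Frostman measures $\mu$ on $A$ and $\lambda=\lambda_\e$ on $B$ with $\mu(B(x,r))\lesssim r^{s_\mu}$, $\lambda(B(y,r))\lesssim r^{s_\lambda}$. Choose the exponents $s_\mu<s_A$, $s_\lambda<s_B$ so that $s_\mu+s_\lambda>d_X+d_Y-2\alpha_\Phi$ and $s_\mu+s_\lambda-d_X-d_Y+2\alpha_\Phi\ge\kappa-\e/2$.

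\textbf{Step 1: pushforward densities.} For each $y$ define the pushforward $\nu_y=(\Phi_y)_*\mu$ on $\R^p$. Formally, its density is
\[
\nu_y(\bt)=\int_{Z_\bt^y}\chi(x,y)\,d\mu(x)=\calR_\bt\mu(y).
\]
We aim to show
\[
\int\!\!\int|\calR_\bt\mu(y)|^2\,d\lambda(y)\,d\bt<\infty,
\]
so that $\nu_y\in L^2(\R^p)$ for $\lambda$-a.e.\ $y$. This yields $\calL^p(\Delta_\Phi^y(A))>0$, because $\nu_y$ is a nonzero measure supported on $\Phi_y(A)$.

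The bound comes from duality and the uniform estimate \eqref{eq: Rt_smoothing_condition_intro}. We have $\mu\in L^2_{-(d_X-s_\mu)/2-\delta}$. Pairing against $\lambda$ requires $\calR_\bt\mu\in L^2_{(d_Y-s_\lambda)/2+\delta}$ locally, with uniform bounds in $\bt\in W$. After mollifying $\mu$ at scale $\delta'$, this gives $\int|\calR_\bt\mu_{\delta'}|^2d\lambda\le C$ uniformly in $\bt$ and $\delta'$, using the energy estimate $\int|g|^2d\lambda\lesssim\|g\|^2_{L^2_\sigma}$ for $\sigma>(d_Y-s_\lambda)/2$. Integrating over the bounded set $W$ and applying Fatou finishes this step. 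Presumably this is exactly the estimate already proved for Theorem~\ref{thm: main_pinned_configuration}(i).

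\textbf{Step 2: slicing via disintegration.} For $\lambda$-a.e.\ $y$ and $\nu_y$-a.e.\ $\bt$, disintegrate $\mu=\int\mu_{y,\bt}\,d\bt$. Here $\mu_{y,\bt}$ is the weak limit as $\eta\to0$ of
\[
\eta^{-p}\mathbf 1_{\{|\Phi_y-\bt|<\eta\}}\,\mu,
\]
a measure on $A\cap\Phi_y^{-1}(\bt)$. We will show that for a positive $\calL^p$-measure set of $\bt$, the slice measure $\mu_{y,\bt}$ is nonzero and has finite $\gamma$-energy, $I_\gamma(\mu_{y,\bt})<\infty$, with $\gamma=\kappa-\e$. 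This gives the dimension bound on the slice. It suffices to prove
\[
\int\!\!\int I_\gamma(\mu_{y,\bt})\,d\bt\,d\lambda(y)<\infty .
\]
Equivalently, uniformly in $\eta$, we must bound
\[
\iint\!\!\int\!\!\int |x-x'|^{-\gamma}\,\eta^{-2p}\mathbf 1_{|\Phi_y(x)-\bt|<\eta}\,\mathbf 1_{|\Phi_y(x')-\bt|<\eta}\,d\mu(x)\,d\mu(x')\,d\bt\,d\lambda(y).
\]
Decompose $|x-x'|^{-\gamma}$ dyadically as $\sum_j2^{j\gamma}\mathbf 1_{|x-x'|\sim2^{-j}}$. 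Localize $\mu$ to cubes $Q$ of side $2^{-j}$, writing $\mu=\sum_Q\mu_Q$. The $j$-th term is then bounded by
\[
2^{j\gamma}\sum_{Q\sim Q'}\int\!\!\int\calR^\eta_\bt\mu_Q\,\calR^\eta_\bt\mu_{Q'}\,d\lambda\,d\bt.
\]
We would apply Step 1's estimate to each localized piece. Uniform smoothing should give a bound of roughly $\|\mu_Q\|^2_{L^2_{-\sigma'}}$, with $\sigma'=(d_X+d_Y-s_\lambda)/2-\alpha_\Phi$ up to $\delta$. Scaling then gives $\|\mu_Q\|^2_{L^2_{-\sigma'}}\lesssim\mu(Q)\,2^{-j(s_\mu-2\sigma')}$ by the Frostman condition. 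Summing over $Q$ gives total $2^{j(\gamma-(s_\mu+s_\lambda-d_X-d_Y+2\alpha_\Phi))}$, which is summable since $\gamma<\kappa-\e/2$.

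\textbf{Main obstacle.} The hardest step is this localization estimate. Sobolev norms of negative order are not local, so we must justify $\|\mu_Q\|^2_{L^2_{-\sigma'}}\lesssim\mu(Q)2^{-j(s_\mu-2\sigma')}$. This holds when $\sigma'<d_X/2$, via the energy $I_{d_X-2\sigma'}(\mu_Q)$. We must also control the cross terms and tails from the smooth cutoffs. A cleaner alternative we would try first is the following. Write $I_\gamma(\mu_{y,\bt})$ on the Fourier side as $\int|\widehat{\mu_{y,\bt}}(\xi)|^2|\xi|^{\gamma-d_X}d\xi$. Then realize $\mu_{y,\bt}\times\mu_{y,\bt}$ as the slice of $\mu\times\mu$ under the map $(x,x')\mapsto(\Phi_y(x),\Phi_y(x')-\Phi_y(x))$, and bound the result by $\int I_{d_X-2\sigma'+\gamma}$-type energies of $\mu$. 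In either route, positivity for a positive-measure set of $\bt$ follows from the facts that $\int\|\mu_{y,\bt}\|\,d\bt=\|\mu\|>0$ and that $I_\gamma(\mu_{y,\bt})$ is finite for a.e.\ $\bt$.
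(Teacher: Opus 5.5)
Your proof is correct in substance. The key step, the averaged energy bound for the slices, goes by a different route from the paper's.

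\textbf{What the two routes share, and where they differ.} Both rest on the same operator bound,
\[
\|\calR_\bt f\|_{L^2(d\lambda)}\lesssim\|f\|_{L^2_{-a_0}}, \qquad a_0=\alpha_\Phi-\tfrac{d_Y-s_\lambda}{2}-\delta,
\]
uniformly in $\bt\in W$. This is what your Step~1 yields from the smoothing hypothesis and the trace inequality $\|h\|_{L^2(d\lambda)}\lesssim\|h\|_{L^2_r}$ for $r>\frac{d_Y-s_\lambda}{2}$.

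The paper applies this bound to the modulated measures $\mu_\xi=e^{-2\pi i x\cdot\xi}\mu$, using $\widehat{\mu_{y,\bt}}(\xi)=\calR_\bt\mu_\xi(y)$. This gives $\int\!\int|\widehat{\mu_{y,\bt}}(\xi)|^2\,d\bt\,d\lambda(y)\lesssim\int|\widehat\mu(\omega)|^2\langle\omega-\xi\rangle^{-2a_0}\,d\omega$. Integrating against $|\xi|^{\sigma-d_X}$ with the Riesz composition formula then gives $\int|\widehat\mu(\omega)|^2|\omega|^{\sigma-2a_0}\,d\omega<\infty$ for $\sigma<2\eta-2\delta$. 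Here $2\eta=s_\mu+s_\lambda-d_X-d_Y+2\alpha_\Phi$, the paper's exponent, not your width parameter.

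You apply the same bound to the physical-space pieces $\mu_Q$ instead, and this works.

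\textbf{Your ``main obstacle'' is not an obstacle.} All integrands are nonnegative, so you may use sharp cube cutoffs and dominate $\mathbf 1_{\{|x-x'|\le 2^{-j}\}}$ by $\sum_{Q\sim Q'}\mathbf 1_Q(x)\mathbf 1_{Q'}(x')$. Cross terms are absorbed by Cauchy--Schwarz over the $3^{d_X}$ neighbours, so no tails arise. Non-locality of negative Sobolev norms is irrelevant, because for the positive measure $\mu_Q$,
\[
\|\mu_Q\|_{L^2_{-a_0}}^2\le\int|\widehat{\mu_Q}(\xi)|^2|\xi|^{-2a_0}\,d\xi=c\,I_{d_X-2a_0}(\mu_Q)\lesssim\mu(Q)\,2^{-j(s_\mu-d_X+2a_0)}.
\]
This uses $0<d_X-2a_0<s_\mu$ (recall $2a_0<2\alpha_\Phi\le\min\{d_X,d_Y\}-p<d_X$) and the Frostman condition for $\mu$ at scale $2^{-j}$. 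Summing over $Q$ and $j$ gives the terms $2^{j(\gamma-2\eta+2\delta)}$, which are summable.

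\textbf{Fix your exponent bookkeeping.} The operator bound lands in $L^2_{-a_0}$, not in $L^2_{-\sigma'}$ with your $\sigma'$. The quantity $d_X-2a_0$ is what you call $2\sigma'$, up to $\delta$. So the norm you wrote is the wrong one, but the final exponent you state is right. Uniformity in your width parameter follows from Minkowski's inequality: the approximate slice operator is an average of $\calR_{\bt'}$ over $\bt'$ near $\bt$. Alternatively, apply Young's inequality in $\bt$ to your Step~1 bound for $\mu_Q$.

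\textbf{What each route buys.} The paper's computation is global and short and needs only finite energy of $\mu$. It relies, however, on the abstract disintegration theorem with a jointly measurable choice, and on a distributional identity for the complex measures $\mu_\xi$.

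Your route uses only positivity and the Frostman ball condition, and it constructs the slices concretely as weak limits of normalized restrictions of $\mu$. In exchange you must justify three things:
\begin{enumerate}
\item almost-everywhere existence of these limits, by differentiating $(\Phi_y)_\#(f\mu)\ll\calL^p$ over a countable dense family of $f$;
\item lower semicontinuity of $I_\gamma$ under weak convergence;
\item the Fatou step.
\end{enumerate}
Your Step~1 is also a more direct proof of the $L^2$-density statement than the Littlewood--Paley argument of Proposition~\ref{prop: pinned_configuration}(i). The product-slice alternative you sketch at the end is unnecessary.
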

\end{samepage}
\subsection{A family of restricted projections}
We consider a higher dimensional restricted projection family introduced by Harris \cite{Harris2022}. Let $d\geq 4$ be even, and let
\begin{align*}
A:\R^d\longrightarrow\R^d
\end{align*}
be an orthogonal skew-symmetric linear map, that is $A^*A=I$ and $A^*=-A$. Write $S^{d-1}_+=\{v\in S^{d-1}:v_d>0\}$ for the open upper hemisphere. Thus, for every $v\in S^{d-1}_+$,
\begin{align*}
v\cdot Av=0,\qquad |Av|=1.
\end{align*}
For each $v\in S^{d-1}_+$, let $\pi_v:\R^{d+1}\to W_v$ denote the orthogonal projection onto the 2-dimensional plane
\begin{align*}
W_v = \operatorname{span}\left\{\frac1{\sqrt2}(v,-1), (Av,0) \right\} \subset\R^{d+1}.
\end{align*}
Since the two displayed vectors are orthonormal, after identifying
$W_v$ with $\R^2$ we may write
\begin{align*}
\pi_{v}(x) = \left( \left\langle x,\frac1{\sqrt2}(v,-1) \right\rangle, \left\langle x,(Av,0)\right\rangle \right), \qquad x=(x',x_{d+1})\in\R^{d+1}.
\end{align*}
Harris proved that this restricted family fails to satisfy the Peres-Schlag transversality condition, and hence the Peres-Schlag projection theorem does not apply to this family. Nevertheless, he proved that if $E\subset\R^{d+1}$ is an analytic set with $\dim_HE>2$, then
\begin{align*}
\mathcal L^2(\pi_{v}(E))>0,
\end{align*}
for a.e. $v\in S^{d-1}_+$.

In Section~\ref{sec: projection_nontranversal_interior}, we apply the FIO framework developed above to obtain a nonempty interior conclusion and a quantitative bound for the exceptional set of parameters for which $\pi_{v}(E)$ has empty interior.
\begin{samepage}
\begin{theorem}
\label{thm: Harris_projection_interior}
Let $d\geq4$ be even and let $A$ be the orthogonal skew-symmetric map fixed above. Let $E\subset\R^{d+1}$ be a compact set with $\dim_HE=s>0$. Then 
\begin{align}
\label{eq: Harris_projection_interior_exceptional}
\dim_H\left\{ v\in S^{d-1}_+: \Int(\pi_{v}(E))=\emptyset \right\} \leq  \min\left\{ d-1,\, d+\frac72-s \right\}.
\end{align}
In particular, if $\dim_HE>\frac{9}{2}$, then $\Int(\pi_{v}(E))\neq\emptyset$,
for $\mathcal H^{d-1}$-a.e.  $v\in S^{d-1}_+$.
\end{theorem}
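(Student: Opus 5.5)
We view the Harris family as a configuration map $\Phi(x,v)=\pi_v(x)$ with $X\subset\R^{d+1}$ ($d_X=d+1$), $Y=S^{d-1}_+$ ($d_Y=d-1$, in local coordinates) and $p=2$. We then apply the rank-based projection theorem, part (3), with $r$ as large as the augmented mixed Hessian allows. Part (3) bounds the exceptional set by $d_X+d_Y+3p-r-s=d+6-r-s$. To get $d+\tfrac72-s$ we would need $r=\tfrac52$, which is not an integer, so the rank theorem alone cannot be the whole story. Instead we intend to use the FIO route underlying Theorem~\ref{thm: main_pinned_configuration}(ii), run with an exceptional-set (Frostman measure on $B$) version. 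The relevant quantity is the Sobolev smoothing $\alpha_\Phi$ of the generalized Radon transforms $\calR_\bt$, which integrate over the codimension-2 affine planes $\{x:\pi_v(x)=\bt\}$. The bound we expect has the form $d_X+d_Y+p-2\alpha_\Phi-s$, the nonempty-interior analogue of parts (2)--(3), adapted to a dimension count on $B$. With $d_X+d_Y+p=2d+2$, this requires $\alpha_\Phi=\tfrac{d}{2}-\tfrac34$ to reach $d+\tfrac72-s$. The bound $d-1$ in the minimum is the trivial one, $\dim_H S^{d-1}_+$.

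\textbf{Step 1: computing the smoothing.} The canonical relation $\Lambda_\bt$ is the twisted conormal of $Z_\bt=\{(x,v):\pi_v(x)=\bt\}$. Write $e_1(v)=\tfrac1{\sqrt2}(v,-1)$ and $e_2(v)=(Av,0)$. A covector on $Z_\bt$ has the form $\tau_1e_1(v)+\tau_2e_2(v)$ in $x$, and in $v$ it is
\[
-\tfrac{1}{\sqrt2}\tau_1 x'-\tau_2A^{T}x'
\]
projected onto $T_vS^{d-1}$. For fixed $\tau$, the mixed Hessian $D_vD_x(\tau\cdot\Phi)$ is the linear map $w\mapsto \tfrac1{\sqrt2}\tau_1(w,0)+\tau_2(Aw,0)$ restricted to $w\in T_vS^{d-1}$.

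Adding the bordering rows gives $\Bphi$. Its rank drops only where the $(d-1)$-dimensional image of $w\mapsto\tfrac{\tau_1}{\sqrt2}w+\tau_2Aw$ degenerates modulo $\operatorname{span}\{e_1,e_2\}$. Because $A$ is orthogonal and skew, $\tfrac{\tau_1}{\sqrt2}I+\tau_2A$ is invertible (its eigenvalues are $\tfrac{\tau_1}{\sqrt2}\pm i\tau_2$), so the loss comes only from the quotienting. We expect the corank of the projection $\Lambda_\bt\to T^*Y$ to be at most $1$, with the worst points being folds or cusps on a lower-dimensional set.

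From this we would invoke the standard estimates cited in Section~\ref{sec: background}: an FIO with canonical relation having one-sided corank-one singularities (Melrose--Taylor or Greenleaf--Seeger type) loses $\tfrac14$ relative to the nondegenerate value $\tfrac{d_Y-p}{2}$ where the latter applies. That is, it smooths by $\tfrac{d_{\min}-p}{2}-\tfrac14$ uniformly in $\bt$, which yields $\alpha_\Phi=\tfrac{d-1-2}{2}+\tfrac14$. We must check this against the target value $\tfrac d2-\tfrac34$ exactly: $\tfrac{d-3}{2}+\tfrac14\neq\tfrac d2-\tfrac34$. So the bookkeeping must be arranged so that the loss is $\tfrac14$ from the value $\tfrac{d-1}{2}$, which is natural once we use that the $x$-side dimension $d+1$ exceeds $d_Y+p$. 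This identification, and verifying uniformity of the constants in $\bt\in W$, is the main obstacle: it requires a careful computation of the singularities of $\Lambda_\bt$ for the Harris family, possibly via an explicit oscillatory-integral/$TT^*$ argument.

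\textbf{Step 2: the exceptional-set argument.} Suppose the exceptional set $B$ had $\dim_H B>d+\tfrac72-s$. Take Frostman measures $\mu$ on $E$ and $\lambda$ on $B$ with exponents $s_A<s$ and $s_B$ close to the respective dimensions. Consider the pushforward densities
\[
\nu_v=(\pi_v)_*\mu, \qquad \nu_v(\bt)=\calR_\bt^{*}\mu(v)\ \text{(suitably interpreted)}.
\]
We bound $\int\|\nu_v\|_{H^{\sigma}(\R^2)}^2\,d\lambda(v)$ with $\sigma>1=p/2$. Using the $L^2$-Sobolev bound and energy estimates (as in the proof of Theorem~\ref{thm: main_pinned_configuration}(ii)), this integral is finite exactly when $s_A+s_B>d_X+d_Y-2\alpha_\Phi+p$.

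Sobolev embedding then makes $\nu_v$ continuous for $\lambda$-a.e.\ $v$. Since $\nu_v$ is nonzero, $\pi_v(E)$ has nonempty interior for these $v$. This contradicts $\lambda(B)>0$. Substituting $\alpha_\Phi$ from Step 1 gives \eqref{eq: Harris_projection_interior_exceptional}. The final claim follows because $s>\tfrac92$ makes the bound $<d-1$.
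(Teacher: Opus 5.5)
There is a genuine gap. It sits exactly at your ``bookkeeping'' remark in Step 1, and it cannot be repaired with $Y=S^{d-1}_+$.

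\textbf{Why your setup cannot work.} With $d_Y=d-1$, the effective order of $\calR_\bt$ is $m_{\mathrm{eff}}=\frac{p-\min\{d_X,d_Y\}}{2}=\frac{3-d}{2}$. So the $L^2$-Sobolev gain cannot exceed $\frac{d-3}{2}$. That is the nondegenerate value, and it is also the cap $\alpha_\Phi\le\frac{\min\{d_X,d_Y\}-p}{2}$ built into Theorem~\ref{thm: main_pinned_configuration}. It is strictly smaller than the $\frac{2d-3}{4}$ you need.
\begin{itemize}
\item A $\frac14$ fold loss (a loss, so $\frac{d-3}{2}-\frac14$, not $+\frac14$) makes the threshold $d_X+d_Y+p-2\alpha$ give only $d+\frac{11}{2}-s$. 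This is off by exactly $2$.
\item Your map also violates the standing hypotheses. On $T_vS^{d-1}$ the two rows of $D_v\pi_v(x)$ are the covectors $P_{v^\perp}x'/\sqrt2$ and $-P_{v^\perp}Ax'$. For $x'=av+bAv$ both are multiples of $Av$. Hence \eqref{eq: rank_condition} fails, and $\Lambda_\bt$ meets the zero section over $Y$ (take $\tau=(\sqrt2a,b)$).
\item Theorem~\ref{thm:one-sided-fold} as cited requires $d_X=d_Y$.
\item Your fold/cusp structure is only conjectured, not verified.
\end{itemize}

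\textbf{The missing idea: an auxiliary translation parameter.} Set $\Phi_H(x,(v,r))=\pi_v(x)-r$ on $\R^{d+1}\times(S^{d-1}_+\times\R^2)$.
\begin{itemize}
\item $D_r\Phi_H=-I_2$ restores the rank condition.
\item $\Delta_{\Phi_H}^{(v,r)}(E)=\pi_v(E)-r$, so nonempty interior is unaffected.
\item Now $d_Y=d+1=d_X$; this is precisely the ``$d_Y+p$'' you were reaching for. Hence $m_{\mathrm{eff}}=-\frac{d-1}{2}$.
\end{itemize}
Parametrize $\Lambda_\bt^H$ by $(x,v,\tau)$. Then $\pi_L$ reduces to $F(v,\tau)=\tau_1e_1(v)+\tau_2e_2(v)$.
\begin{itemize}
\item Using the invertibility of $\tfrac{\tau_1}{\sqrt2}I+\tau_2A$, which you noted, $\det DF$ is a nonvanishing multiple of $\tau_2\bigl(\tfrac{\tau_1^2}{2}+\tau_2^2\bigr)^{(d-2)/2}$.
\item On $\{\tau_2=0\}$ the kernel is spanned by $(Av,0,-\tau_1/\sqrt2)$, which is transverse to that hypersurface.
\end{itemize}
So $\pi_L$ is a Whitney fold, and Greenleaf--Seeger gives exactly $\alpha=\frac{d-1}{2}-\frac14=\frac{2d-3}{4}$, uniformly in $\bt$.

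\textbf{Finishing the argument.} Take a compact $\Gamma$ in the exceptional set with $\dim_H\Gamma>d+\frac72-s$, and let $B=\Gamma\times[0,1]^2$. Then $\dim_HB\ge\dim_H\Gamma+2$. Theorem~\ref{thm: main_pinned_configuration}(ii), with threshold $d_X+d_Y-2\alpha+p=d+\frac{11}{2}$, gives a contradiction. The augmentation raises $d_Y$ and $2\alpha$ by $2$ each, leaving the threshold unchanged, while $\dim_HB$ gains $2$. That is exactly the discrepancy in your count. Your Step 2 is fine once this is in place.
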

\end{samepage}
\begin{remark}
For this projection family, extending Harris's weighted Fourier restriction argument to higher Sobolev regularity might yield better dimensional thresholds for nonempty interior in some ambient dimensions. We do not pursue such a refinement here.
\end{remark}

\subsection{Examples}
We record consequences of Theorems
\ref{thm: main_pinned_configuration} and
\ref{thm: pinned_slicing} for configurations considered in
\cite[Corollaries 1.4 and 2.1-2.3]{GreenleafIosevichTaylor2021}.

\begin{corollary}[Pinned point-to-hyperplane distances]
\label{cor:pinned-point-hyperplane}
Let $d\geq 2$, let $A$ be a compact family of affine
hyperplanes in $\R^d$, and let $B\subset\R^d$ be compact.
For each $y\in \R^d$, denote
\begin{align*}
\Delta_{\mathrm{hyp}}^y(A) =\{\operatorname{dist}(y,H):H\in A\}.
\end{align*}
Then the following statements hold:
\begin{itemize}
    \item[(i)] If $\dim_HA+\dim_HB>d+1$, then for every
$0<\e<\dim_HA+\dim_HB-(d+1)$ there exists a Frostman probability measure
$\lambda_\e$ on $B$ such that, for $\lambda_\e$-a.e. $y$,
\begin{align*}
\calL^1\left( \left\{t>0: \dim_H\{H\in A:\operatorname{dist}(y,H)=t\}
\geq \dim_HA+\dim_HB-(d+1)-\e \right\}\right)>0.
\end{align*}
    \item[(ii)] If $\dim_HA+\dim_HB>d+2$, then $\Delta_{\mathrm{hyp}}^y(A)$
contains a nonempty open interval for some $y\in B$.
\end{itemize}
\end{corollary}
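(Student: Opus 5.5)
We would deduce Corollary~\ref{cor:pinned-point-hyperplane} from Theorems~\ref{thm: main_pinned_configuration} and~\ref{thm: pinned_slicing}. The first step is to choose coordinates. Away from vertical hyperplanes we may parametrize affine hyperplanes locally by $(a,b)\in\R^{d-1}\times\R$, with $H_{a,b}=\{z: z_d=a\cdot z'+b\}$. A compact family $A$ is covered by finitely many such charts after rotations, and Hausdorff dimension is preserved under these smooth changes of chart. It therefore suffices to treat one chart in which $\dim_H A$ is attained. In this chart $X\subset\R^d$, $Y\subset\R^d$ and
\[
\operatorname{dist}(y,H_{a,b})=\frac{|y_d-a\cdot y'-b|}{\sqrt{1+|a|^2}}.
\]
We split $A\times B$ into the pieces where $y$ lies strictly on one side of $H$. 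On each piece the distance equals $\pm\Phi$, with $\Phi(x,y)=(y_d-a\cdot y'-b)(1+|a|^2)^{-1/2}$ smooth and $p=1$. We also discard the incidence set $\{\Phi=0\}$: it contributes at most the single value $t=0$ to $\Delta^y_{\mathrm{hyp}}(A)$ and does not affect positive-measure statements about $t>0$. Localizing $A$ and $B$ to small pieces, one of which still has full dimension in each factor, preserves the hypotheses.

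The second step checks the hypotheses of the theorems. First, $D_b\Phi\neq0$ and $D_{y_d}\Phi\neq0$, so \eqref{eq: rank_condition} holds with $p=1$. The main task is to establish \eqref{eq: Rt_smoothing_condition_intro} with $\alpha_\Phi=(d-1)/2=(\min\{d_X,d_Y\}-p)/2$, which is the maximal value. This is the content of \cite[Corollary 1.4]{GreenleafIosevichTaylor2021}, which we would cite. To justify it directly, we would show that $\Lambda_\bt$ is a local canonical graph, uniformly in $\bt$. By the standard criterion, it suffices that the Monge--Ampère-type determinant $\det \mathbf B_\Phi(x,y,\tau)$ be nonzero for $\tau\neq0$; here $\mathbf B_\Phi$ is a $(d+1)\times(d+1)$ matrix. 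With $\tau=1$, we have $D_x\Phi=(1+|a|^2)^{-1/2}\big(-y'+(\cdots)a,\,-1\big)$, $D_y\Phi=(1+|a|^2)^{-1/2}(-a,1)$, and the mixed Hessian has the block $-(1+|a|^2)^{-1/2}I_{d-1}$ in $\partial_a\partial_{y'}$, plus lower-order terms coming from the normalization factor. Restricted to $Z_\bt$, the normalization factor only modifies $\Phi$ by a nonvanishing multiplicative function of $x$, and the incidence relation, hence the canonical relation, is unchanged. So we may compute instead with $\tilde\Phi=y_d-a\cdot y'-b$, for which
\[
\mathbf B=\begin{bmatrix}0&-y'&-1\\ -a^T&-I_{d-1}&0\\ 1&0&0\end{bmatrix}
\]
up to ordering. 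Expanding along the last row and then the first, this matrix has determinant $\pm1\neq0$. Hence $\Lambda_\bt$ is a local canonical graph and $\calR_\bt$, an FIO of order $\frac12-\frac d2$, maps $L^2_s\to L^2_{s+(d-1)/2}$ with constants uniform over compact $\bt$-sets. The main obstacle is exactly this verification: keeping the normalization factor and the sign splitting rigorous, and handling uniformity as $\bt$ ranges over a bounded $W$ bounded away from $0$.

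The final step applies the two theorems with $d_X=d_Y=d$, $p=1$ and $\alpha_\Phi=(d-1)/2$, so that $d_X+d_Y-2\alpha_\Phi=d+1$. Theorem~\ref{thm: pinned_slicing} gives $\kappa=\dim_HA+\dim_HB-(d+1)$ and a Frostman measure $\lambda_\e$ on $B$ for which the slices $\{H:\Phi=t\}=\{H:\operatorname{dist}(y,H)=t\}$ have dimension at least $\kappa-\e$ for a positive-measure set of $t$. Replacing $t$ by $|t|$ preserves positive measure, which gives (i). For (ii), note that $\alpha_\Phi=(d-1)/2>1/2=p/2$ requires $d\ge3$. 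In that case Theorem~\ref{thm: main_pinned_configuration}(ii) applies under $\dim_HA+\dim_HB>d+2$ and gives nonempty interior of $\Phi_y(A)$, hence an open interval in $\Delta^y_{\mathrm{hyp}}(A)$ after taking absolute values away from $0$. When $d=2$ we have $\dim_HA+\dim_HB\le4=d+2$, so the hypothesis of (ii) can never hold and (ii) is vacuous.
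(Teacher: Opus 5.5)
Your proposal is correct and matches the paper's route, which the paper leaves implicit. You plug $d_X=d_Y=d$, $p=1$ and the canonical-graph smoothing $\alpha_\Phi=(d-1)/2$ (as in \cite{GreenleafIosevichTaylor2021}) into Theorems~\ref{thm: main_pinned_configuration} and~\ref{thm: pinned_slicing}, work with the globally smooth signed distance, take absolute values at the end, and note that (ii) is vacuous for $d=2$.

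One small correction to your verification: for $t\neq0$ the level set $\{\Phi=t\}$ is not $\{\tilde\Phi=t\}$, so the correct defining function is $\tilde\Phi-t\sqrt{1+|a|^2}$. This only changes the $a$-entries of the top row of $\mathbf B_\Phi$, so the determinant is still $\pm1$. Since the signed distance is smooth everywhere, you also need neither the sign splitting nor a parameter set $W$ bounded away from $0$.
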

\begin{corollary}[Pinned point-to-sphere distances]
Let $d\geq 2$, and identify the space of spheres in $\R^d$ with
\begin{align*}
\mathcal S_d=\R^d\times(0,\infty),
\end{align*}
where $(a,r)\in \calS_d$ represents the sphere with center $a$ and radius $r$.
Let $A\subset\mathcal S_d$ and $B\subset\R^d$ be compact, and assume
that
\begin{align*} a\neq y \qquad \text{for every }(a,r)\in A,\ y\in B.
\end{align*}
For each $y\in\R^d$, denote
\begin{align*}
\Delta_{\mathrm{sph}}^y(A) = \left\{ \big||a-y|-r\big|:(a,r)\in A \right\}.
\end{align*}
Then the following hold:
\begin{enumerate}[label=\textup{(\roman*)}]
\item
If $\dim_HA+\dim_HB>d+2$, then for every $0<\e<\dim_HA+\dim_HB-(d+2)$, there exists a Frostman probability measure $\lambda_\e$ on $B$ such that, for $\lambda_\e$-a.e. $y$,
\begin{align*}
\calL^1\left(\left\{t>0:\dim_H\left\{(a,r)\in A:\big||a-y|-r\big|=t\right\}
\geq \dim_HA+\dim_HB-(d+2)-\e\right\}\right)>0.
\end{align*}

\item
If $\dim_HA+\dim_HB>d+3$, then $\Delta_{\mathrm{sph}}^y(A)$ contains a nonempty open interval
for some $y\in B$.
\end{enumerate}
\end{corollary}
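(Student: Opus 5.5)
We reduce the corollary to Theorems~\ref{thm: main_pinned_configuration} and~\ref{thm: pinned_slicing}, applied to an explicit configuration map. We parametrize spheres by $x=(a,r)\in X\subset\R^{d+1}$, so $d_X=d+1$, and points by $y\in Y\subset\R^d$, so $d_Y=d$. We take
\[
\Phi(x,y)=|a-y|-r .
\]
The separation hypothesis $a\neq y$ on $A\times B$, together with compactness, lets us choose open $X,Y$ so that $|a-y|\ge c>0$ on $X\times Y$; hence $\Phi$ is smooth there. Since $\big||a-y|-r\big|$ is the absolute value of $\Phi$, the set $\Delta_{\mathrm{sph}}^y(A)$ is the image of $\Delta_\Phi^y(A)$ under $t\mapsto|t|$. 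This map preserves positive measure and nonempty interior: one of the two halves $\pm t>0$ of a set of positive measure, or of an open interval, survives, and a set with nonempty interior contains an interval avoiding $0$. The slices for $t>0$ are unions of the slices of $\Phi$ at $\pm t$, so the slice dimension bound transfers as well.

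Next we check the hypotheses. First, $D_x\Phi=((a-y)/|a-y|,-1)$ and $D_y\Phi=-(a-y)/|a-y|$ are nonzero, so \eqref{eq: rank_condition} holds with $p=1$.

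The key step is the smoothing estimate \eqref{eq: Rt_smoothing_condition_intro} with $\alpha_\Phi=(d-1)/2=(\min\{d_X,d_Y\}-p)/2$, uniformly in $t\in W$. For fixed $y$ and $t$, the fiber $Z_t^y=\{(a,r): r=|a-y|-t\}$ is a cone-like graph over $a$. I would instead view $\calR_t$ as integrating over $a$ with $r$ determined. Equivalently, $\calR_t$ is a composition of spherical averages $f(\cdot,r)\mapsto \int_{|a-y|=r+t}$, integrated in $r$. Per the paper's examples section (following \cite[Cor.~2.2]{GreenleafIosevichTaylor2021}), the relevant computation is the rank of the mixed Hessian matrix $\Bphi$ on $\Lambda_t$. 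Its rank is $p+d_{\min}=d+1$ wherever $\tau\neq0$, using the curvature of $a\mapsto|a-y|$ transverse to $a-y$.

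This is where the main work lies, since $d_X\neq d_Y$. The canonical relation is then not a local canonical graph. Instead, it has a nondegenerate projection to $T^*Y$, i.e.\ the projection $\pi_R$ is a submersion, with $\rank D\pi_R$ maximal. By H\"ormander's $L^2$ theorem for FIOs whose canonical relation projects with maximal rank, an operator of order $m=\frac12-\frac{2d+1}{4}$ loses $\frac{d_X-d_Y}{4}=\frac14$ relative to the diagonal case. This yields smoothing of order $-m-\frac14=\frac{d-1}{2}$. Uniformity in $t$ follows because the computation and the bound $|a-y|\ge c$ are uniform over compact $W$.

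With $\alpha_\Phi=(d-1)/2$ we get $d_X+d_Y-2\alpha_\Phi=2d+1-(d-1)=d+2$ and $d_X+d_Y-2\alpha_\Phi+p=d+3$. Theorem~\ref{thm: pinned_slicing} then gives (i) with $\kappa=\dim_HA+\dim_HB-(d+2)$. Theorem~\ref{thm: main_pinned_configuration}(ii) gives (ii), provided $\alpha_\Phi>p/2$, i.e.\ $d>2$.

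For $d=2$ the condition $\alpha_\Phi>p/2$ fails, so part (ii) does not follow from Theorem~\ref{thm: main_pinned_configuration}(ii) as stated. The case $d=2$ is still to be settled, and I see two routes, neither yet carried out. One is to check whether the interior argument actually needs only $\dim_HA+\dim_HB>d_X+d_Y-2\alpha_\Phi+p$, so that the threshold $d+3=5$ suffices on its own. The other, if that fails, is to handle $d=2$ separately.
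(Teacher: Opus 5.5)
Your approach is the one the paper intends. The corollary is recorded as a direct consequence of Theorems~\ref{thm: main_pinned_configuration} and~\ref{thm: pinned_slicing}, using the nondegenerate smoothing order $\alpha_\Phi=(d-1)/2$. Your passage from the signed map $\Phi=|a-y|-r$ to $\big||a-y|-r\big|$, for both images and slices at $\pm t$, is handled correctly.

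Your rank claim, which you assert rather than verify, is also right. With $\omega=(a-y)/|a-y|$ and $z=(z_0,w)\in\R\times\R^d$,
\[
z^T\Bphi=\Bigl(-\omega\cdot w,\ z_0\,\omega^T-\tfrac{\tau}{|a-y|}\,w^T(I-\omega\omega^T),\ -z_0\Bigr).
\]
This vanishes only if $z_0=0$ (from the entry $\partial_r\Phi=-1$), $\omega\cdot w=0$ and $(I-\omega\omega^T)w=0$, that is, only if $w=0$. Hence $\rank\Bphi=d+1$, so $\pi_L$ is an immersion and $\pi_R$ a submersion. Theorem~\ref{thm: FIO_standard}(i) then gives $\alpha_\Phi=-m_{\mathrm{eff}}=(d-1)/2$. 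The hypotheses $\dim_HA,\dim_HB>0$ of the theorems are automatic, since $\dim_HA\le d+1$ and $\dim_HB\le d$.

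The gap is the case $d=2$ of (ii), which you leave open, though it needs no new argument. For $d=2$ one has $A\subset\R^3$ and $B\subset\R^2$, so $\dim_HA+\dim_HB\le 5=d+3$. The hypothesis of (ii) can therefore never hold, and the statement is vacuously true.

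This is not a coincidence. The restriction $\alpha_\Phi>p/2$ in Theorem~\ref{thm: main_pinned_configuration}(ii) is exactly the condition under which the threshold $d_X+d_Y-2\alpha_\Phi+p$ falls below $d_X+d_Y$. Here that threshold is $d+3$, and $d+3<2d+1$ if and only if $d>2$. Moreover, Proposition~\ref{prop: pinned_configuration}(ii) uses only $\eta>p/2$ and never $\alpha_\Phi>p/2$. So your first route would also succeed, but it is unnecessary. Adding the vacuity remark completes the proof.
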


\begin{corollary}[Pinned point-to-line distances]
Let $d\geq 3$, let $A$ be a compact family of affine lines in $\R^d$, and let $B\subset\R^d$ be compact. Assume that $y\notin L$ for every $L\in A$ and $y\in B$. For each $y\in\R^d$, denote
\begin{align*}
\Delta_{\mathrm{line}}^y(A)=\{\operatorname{dist}(y,L):L\in A\}.
\end{align*}
Then the following hold:
\begin{enumerate}[label=\textup{(\roman*)}]
\item
If $\dim_HA+\dim_HB>2d-1$, then for every $0<\e<\dim_HA+\dim_HB-(2d-1)$, there exists a Frostman probability measure $\lambda_\e$ on $B$ such that, for $\lambda_\e$-a.e. $y$,
\begin{align*}
\calL^1\left(\left\{t>0:\dim_H\{L\in A:\operatorname{dist}(y,L)=t\}\geq \dim_HA+\dim_HB-(2d-1)-\e\right\}\right)>0.
\end{align*}

\item If $\dim_HA+\dim_HB>2d$, then $\Delta_{\mathrm{line}}^y(A)$ contains a nonempty open interval for some $y\in B$.
\end{enumerate}
\end{corollary}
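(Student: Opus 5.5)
The plan is to realize the point-to-line distance as a configuration map satisfying the hypotheses of Theorem~\ref{thm: main_pinned_configuration} with $p=1$, and then read off both parts from Theorem~\ref{thm: pinned_slicing} and Theorem~\ref{thm: main_pinned_configuration}(ii).

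\textbf{Parametrization and rank conditions.} Locally parametrize affine lines in $\R^d$ by pairs $(\omega,a)$, where $\omega$ ranges over a chart of $S^{d-1}$ (modulo $\pm$) and $a\in\omega^\perp$. This gives an open set $X\subset\R^{d_X}$ with $d_X=2d-2$. Take $Y=\R^d$, so $d_Y=d$. Since $y\notin L$ on the compact set $A\times B$, we may shrink $X\times Y$ to a neighborhood of $A\times B$ on which $\operatorname{dist}(y,L)>0$. There we set
\[
\Phi(L,y)=\operatorname{dist}(y,L)^2=\bigl|P_{\omega^\perp}(y-a)\bigr|^2,
\]
which is smooth. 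Because $y-a$ has a nonzero component orthogonal to $\omega$, we have $D_y\Phi=2P_{\omega^\perp}(y-a)\neq0$, and $D_a\Phi\neq0$ as well, so \eqref{eq: rank_condition} holds with $p=1$. Since $d\ge3$, we have $\min\{d_X,d_Y\}=d$, and hence $(\min\{d_X,d_Y\}-p)/2=(d-1)/2$.

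\textbf{Smoothing order.} Next we need $\alpha_\Phi=(d-1)/2$, the maximal admissible value. This is the key input and the main obstacle. By Greenleaf--Iosevich--Taylor \cite[Corollary 2.3]{GreenleafIosevichTaylor2021}, the generalized Radon transforms $\calR_\bt$ for this $\Phi$ have canonical relations whose projection to $T^*Y$ is a submersion, uniformly for $\bt$ in compact subsets of $(0,\infty)$. By H\"ormander's estimate for such FIOs, this gives \eqref{eq: Rt_smoothing_condition_intro} with $\alpha_\Phi=(d-1)/2$.

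If I had to verify this directly, I would check that the augmented mixed Hessian $\Bphi$ has the full rank $p+d_Y=d+1$. The natural approach is to put $y-a=t\nu$ with $\nu\perp\omega$ by a rigid motion and compute $D_yD_{(\omega,a)}(\tau\Phi)$ explicitly. The $a$-derivatives contribute the block $-2\tau P_{\omega^\perp}$, of rank $d-1$ on $\omega^\perp$. The $\omega$-derivatives contribute $-2\tau t(\omega\otimes\nu+\ldots)$, which supplies the missing $\omega$ direction. The bordering row $D_y\Phi\propto\nu$ accounts for the remaining rank. The uniformity in $\bt$ comes from $t$ being bounded away from $0$ on the compact set.

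\textbf{Applying the theorems.} With $\alpha_\Phi=(d-1)/2$ we have
\[
d_X+d_Y-2\alpha_\Phi=(2d-2)+d-(d-1)=2d-1,
\]
so $\kappa=\dim_HA+\dim_HB-(2d-1)$. For part (i), Theorem~\ref{thm: pinned_slicing} gives a Frostman measure $\lambda_\e$ on $B$ such that, for $\lambda_\e$-a.e.\ $y$, the set of $\bt$ whose slice has dimension at least $\kappa-\e$ has positive $\calL^1$ measure. For part (ii), note that $d\ge3$ gives $\alpha_\Phi=(d-1)/2\ge1>p/2$. So Theorem~\ref{thm: main_pinned_configuration}(ii) applies at the threshold $2d-1+p=2d$, yielding $y\in B$ with $\Int\Delta_\Phi^y(A)\neq\emptyset$.

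\textbf{Transfer from $t^2$ to $t$.} It remains to pass from $\bt=\operatorname{dist}^2$ back to $t=\operatorname{dist}$. The map $\bt\mapsto\sqrt{\bt}$ is a diffeomorphism of $(0,\infty)$, so it preserves positive Lebesgue measure and open sets. The slices $\{L:\operatorname{dist}(y,L)^2=\bt\}$ and $\{L:\operatorname{dist}(y,L)=\sqrt{\bt}\}$ coincide. Finally, Hausdorff dimension of subsets of $A$ is unaffected by the choice of local smooth coordinates on the space of lines, and a finite partition of $A$ into coordinate patches (keeping a piece of full dimension) handles the localization.
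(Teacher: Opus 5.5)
Your proposal is correct and follows the paper's route. The paper gives no separate proof: it inserts the Greenleaf--Iosevich--Taylor smoothing order $\alpha_\Phi=(d-1)/2$ (with $d_X=2d-2$, $d_Y=d$, $p=1$) into Theorem~\ref{thm: pinned_slicing} and Theorem~\ref{thm: main_pinned_configuration}(ii), which gives the thresholds $2d-1$ and $2d$, and that is what you do. Your direct check that $\Bphi$ has full rank $d+1$, the passage from $\operatorname{dist}^2$ to $\operatorname{dist}$, and the localization to coordinate patches are sound additions to what the paper leaves implicit.
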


\begin{corollary}[Multiparameter pinned distances]
Let $p\geq 2$, let $d_j\geq 2$, and set
\[
D=\sum_{j=1}^p d_j,
\qquad
m=\min_{1\leq j\leq p}(d_j-1).
\]
Write
\[
x=(x^{(1)},\ldots,x^{(p)})
\in\R^{d_1}\times\cdots\times\R^{d_p}.
\]
Let $A,B\subset\R^D$ be compact and assume that
\[
x^{(j)}\neq y^{(j)}
\quad\text{for all }(x,y)\in A\times B
\text{ and }1\leq j\leq p.
\]
Define
\[
\Phi_{p}(x,y)
=
\bigl(
|x^{(1)}-y^{(1)}|,\ldots,
|x^{(p)}-y^{(p)}|
\bigr),
\]
and, for each $y\in\R^D$, put
\[
\Delta_{p}^y(A)
=
\{\Phi_{p}(x,y):x\in A\}.
\]
Then the following hold.
\begin{enumerate}[label=\textup{(\roman*)}]
\item
If $\dim_H A+\dim_H B>2D-m$, put
\[
\kappa=\dim_H A+\dim_H B-(2D-m).
\]
Then, for every $0<\e<\kappa$, there exists
a Frostman probability measure $\lambda_\e$
supported on $B$ such that, for
$\lambda_\e$-a.e.\ $y$,
\[
\calL^p\Bigl(
\Bigl\{\bt\in(0,\infty)^p:
\dim_H\bigl\{
x\in A:
\Phi_{p}(x,y)=\bt
\bigr\}
\geq\kappa-\e
\Bigr\}
\Bigr)>0.
\]

\item
If $\dim_H A+\dim_H B>2D-m+p$, then
$\Delta_{p}^y(A)$
has nonempty interior for some $y\in B$.
\end{enumerate}
\end{corollary}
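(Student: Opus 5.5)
The last statement is the multiparameter pinned distance corollary. I will deduce it from Theorem~\ref{thm: pinned_slicing} for part (i) and from Theorem~\ref{thm: main_pinned_configuration}(ii) for part (ii). The only input needed is the identification $\alpha_{\Phi_p}=m/2$, where $d_X=d_Y=D$.

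First I check the rank condition \eqref{eq: rank_condition}. On the set where $x^{(j)}\neq y^{(j)}$ for all $j$ (an open neighborhood of $A\times B$, on which we localize), $D_x\Phi_p$ is block diagonal. Its $j$-th row is the unit vector $(x^{(j)}-y^{(j)})/|x^{(j)}-y^{(j)}|$ placed in the $j$-th block. Hence it has rank $p$, and the same holds for $D_y\Phi_p$.

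Next I establish the smoothing estimate. The incidence relation $Z_\bt$ is the product over $j$ of the sphere incidences $\{|x^{(j)}-y^{(j)}|=t_j\}$ in $\R^{d_j}\times\R^{d_j}$. Consequently $\calR_\bt$, modulo the cutoff $\chi$, is the tensor product $\bigotimes_j \calR^{(j)}_{t_j}$ of spherical averaging operators. Each factor is an FIO with nondegenerate (locally a canonical graph) canonical relation, since spheres have nonvanishing curvature. Each factor therefore gains $(d_j-1)/2$ derivatives in $L^2$-Sobolev spaces, uniformly for $t_j$ in a compact subset of $(0,\infty)$.

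For the tensor product, the smoothing is only as good as the worst factor. The reason is that the Fourier variable may concentrate in a single block, say the $j$-th, while the other factors act as operators of order $0$ in directions where they gain nothing. I would make this rigorous by noting that $\calR^{(k)}_{t_k}$ is bounded on $L^2$ and commutes appropriately with $(1+|\xi^{(j)}|^2)^{s/2}$ up to lower-order terms. The cleaner route is to cite the computation in \cite[Corollary 2.3]{GreenleafIosevichTaylor2021}, where this family is shown to satisfy \eqref{eq: Rt_smoothing_condition_intro} with $\alpha_\Phi=\min_j (d_j-1)/2=m/2$. In the canonical relation language, $\Lambda_\bt$ projects with corank at most $D-p-m$, so one obtains the loss-free estimate $\alpha=(D-p)/2-(D-p-m)/2=m/2$. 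I would take that citation as the step to invoke.

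I also need $m/2\leq (D-p)/2$. This holds because $m\leq d_j-1$ for each $j$, and summing gives $pm\leq D-p$, so $m\leq D-p$.

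For part (i), Theorem~\ref{thm: pinned_slicing} applies with
\[
\kappa=s_A+s_B-2D+m,
\]
which is exactly the corollary's $\kappa$. Since $\Phi_p$ takes values in $(0,\infty)^p$ on $A\times B$, the $\bt$-set lies in $(0,\infty)^p$. For part (ii), Theorem~\ref{thm: main_pinned_configuration}(ii) requires $\alpha_\Phi=m/2>p/2$, and it yields nonempty interior when $s_A+s_B>2D-m+p$.

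The main obstacle is the hypothesis $m>p$, which the corollary does not state. If $m\leq p$, then $2D-m+p\geq 2D$ and the hypothesis in (ii) is vacuous, so (ii) holds trivially. Thus only the case $m>p$ needs Theorem~\ref{thm: main_pinned_configuration}(ii), and it is covered.
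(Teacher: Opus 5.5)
Your proposal is correct and matches the paper's route: the corollary is recorded as a direct consequence of Theorems~\ref{thm: pinned_slicing} and \ref{thm: main_pinned_configuration}(ii), using the smoothing order $\alpha_{\Phi_p}=m/2$ from \cite{GreenleafIosevichTaylor2021}. Your corank bound $D-p-m$ is right, since $\rank\Bphi$ is minimized, with value $m+2p$, when exactly one $\tau_j\neq0$ with $d_j$ minimal. Your observation that (ii) is vacuous when $m\leq p$, because $\dim_H A+\dim_H B\leq 2D$, correctly handles the implicit requirement $\alpha_\Phi>p/2$.
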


\begin{remark}
Greenleaf, Iosevich, and Taylor
\cite{GreenleafIosevichTaylor2021}
obtained nonempty interior for unpinned multiparameter
distance sets under the condition
$\dim_H A+\dim_H B>2D-m$.

For pinned multiparameter distances, Du, Ou, and Zhang
\cite{DuOuZhang2022} proved that if
$E\subset\R^D$ is compact and
\[
\dim_H E>
\begin{cases}
D-\dfrac{d_{\min}}2+\dfrac14,
&d_{\min}\text{ even},\\[2mm]
D-\dfrac{d_{\min}}2+\dfrac14+
\dfrac1{4d_{\min}},
&d_{\min}\text{ odd},
\end{cases}
\]
where $d_{\min}=\min_j d_j$, then there exists
$y\in E$ such that the pinned multiparameter distance
set has positive $p$-dimensional Lebesgue measure.

While \cite{DuOuZhang2022} obtain a sharper threshold in the
one set setting, our corollary allows the source set $A$ and the set of
pins $B$ to have different Hausdorff dimensions and
provides stronger geometric conclusions under different
hypotheses. To illustrate this, take $p=2$ and
$d_1=d_2=6$, so that $D=12$ and $m=5$.
The threshold of \cite{DuOuZhang2022} is
\[
\dim_H E>12-\frac62+\frac14=9.25.
\]
In contrast, if $\dim_H B=11.9$, our corollary gives:
\begin{enumerate}[label=\textup{(\roman*)}]
\item If $\dim_H A>7.1$, then for almost every pin
$y\in B$, with respect to a suitable Frostman measure,
the corresponding fibers in $A$ have positive Hausdorff
dimension for a positive Lebesgue measure set of
distance parameters.

\item If $\dim_H A>9.1$, then there exists $y\in B$
such that $\Delta_{p}^y(A)$
has nonempty interior. These are asymmetric conclusions with a separate pin set of dimension $11.9$. They do not improve the one-set threshold of
\cite{DuOuZhang2022}, as the hypotheses, as well as the
conclusions, are different.
\end{enumerate}
\end{remark}

The rest of the paper is organized as follows. In Section~\ref{sec: background}, we recall some background on Fourier integral
operators and dimensions of sets used throughout
the paper. In Section~\ref{sec: pinned-2points_configurations},
we prove the pinned configuration theorem. Section~4 treats the nonlinear
projection results, including criteria for failure of Peres--Schlag
transversality and dimension estimates under rank conditions. The slicing theorem is proved in Section~\ref{sec: slicing_sets}. 
Finally, Section~\ref{sec: projection_nontranversal_interior}
applies these results to Harris's restricted projection family.

\section{Background on Fourier integral operators and dimensions of sets}\label{sec: background}
\subsection{Fourier integral operators}
Let $X$ be a $C^\infty$ manifold of dimension $d$. The cotangent bundle $T^\ast X$ is a \textit{symplectic manifold} with respect to the \textit{canonical symplectic two-form}, $\omega=\sum dx_j\wedge d\xi_j$ (with respect to any local coordinates). The zero section  $\left\{ \xi=0\right\}$ of $T^\ast X$ will be denoted by $\bzero$. A submanifold $\Lambda\subset T^\ast X\setminus \bzero$ is called \textit{conic} if it is invariant under $(x,\xi)\mapsto (x,\tau\xi)$ for $0<\tau<\infty$. If $\Lambda$ is smooth, conic, $\dim (\Lambda)=d=\frac{1}{2}\dim(T^\ast X)$, and $\omega\vert_\Lambda\equiv 0$, then we call $\Lambda $ a \textit{conic Lagrangian submanifold} of $T^\ast X$.

Assume that $X$ and $Y$ are smooth manifolds of dimensions $d_X$, $d_Y$, respectively. The cotangent bundles $T^\ast X$, $T^\ast Y$ are symplectic manifolds, with canonical symplectic two-forms $\omega_{T^\ast X},\omega_{T^\ast Y}$, respectively. Then $\omega_{T^\ast X}-\omega_{T^\ast Y}=\sum dx_j\wedge d\xi_j-\sum dy_j\wedge d\eta_j$ is the difference symplectic form on $T^\ast X\times T^\ast Y$. A \textit{canonical relation} is a submanifold $\Lambda\subset (T^\ast X\setminus \bzero)\times (T^\ast Y\setminus 0)$ which is conic Lagrangian with respect to $\omega_{T^\ast X}-\omega_{T^\ast Y}$.

Let $p\geq  1$ be an integer and let $\phi: X\times Y\times (\R^p\setminus \bzero)\to \R$ be a smooth phase function that is positively homogeneous of degree $1$ in $\theta\in \R^p$, i.e., $\phi(x,y,\tau\theta)=\tau\cdot \phi(x,y,\theta)$, for all $\tau\in \R_{+}$. Let $\Sigma_\phi$ be the \textit{critical set} of $\phi$ in the $\theta$ variables,
\begin{align*}
    \Sigma_\phi:=\left\{  (x,y,\theta)\in X\times Y\times (\R^p\setminus \bzero): d_\theta\phi(x,y,\theta)=0\right\}.
\end{align*}
The phase function $\phi(x,y,\theta)$ is said to be \textit{nondegenerate} if
\begin{align*}
    d_x\phi(x,y,\theta)\neq 0\quad \text{ and }\quad d_y\phi(x,y,\theta)\neq 0,\quad \forall (x,y,\theta)\in \Sigma_\phi,
\end{align*}
and 
\begin{align*}
    \rank\left[d_{x,y,\theta}d_\theta\phi(x,y,\theta)\right]=p,\quad \forall(x,y,\theta)\in \Sigma_\phi.
\end{align*}
This means that the $p$ differentials $\left\{ d(d_{\theta_l}\phi)\right\}_{l=1}^p$ are linearly independent, which implies that $\Sigma_\phi$ is a $C^\infty$ conic submanifold of $X\times Y\times (\R^p\setminus 0)$ with codimension $p$. Moreover, one can check that the map
\begin{align*}
    F:\Sigma_\phi\ni (x,y,\theta) \to (x,d_x\phi(x,y,\theta); y,-d_y\phi(x,y,\theta))\in (T^\ast X\setminus\bzero)\times (T^\ast Y\setminus \bzero)
\end{align*}
is an immersion, whose image $C_\phi:=F(\Sigma_\phi)$ is an immersed canonical relation; the phase function $\phi$ is said to \textit{parametrize} $C_\phi$.

Let $\Lambda\subset (T^\ast X\setminus \bzero)\times (T^\ast Y\setminus \bzero)$ be a canonical relation and $m\in \R$. We define $I^m(X,Y;\Lambda)$, the class of \textit{Fourier integral operators} $A:\mathcal{E}'(X)\to \mathcal{D}'(Y)$ of order $m$, as the collection of operators whose Schwartz kernels are locally finite sums of oscillatory integrals of the form
\begin{align*}
    K_A(x,y)=\int_{\R^p}e^{i\phi(x,y,\theta)}a(x,y,\theta)\,d\theta,
\end{align*}
where $\phi$ is a nondegenerate phase function as above parametrizing some relatively open $C_\phi\subset \Lambda$, and the amplitude $a(x,y,\theta)$ is a symbol of order $\gamma=m-\frac{p}{2}+\frac{d_X+d_Y}{4}$.

For a general canonical relation $\Lambda\subset (T^\ast X\setminus \bzero)\times (T^\ast Y\setminus \bzero)$, consider the natural projections $\pi_L: T^\ast X\times T^\ast Y\to T^\ast X$, and $\pi_R: T^\ast X\times T^\ast Y\to T^\ast Y$ restricted to $\Lambda$. We use the same notation for the restricted maps. One can check that, at any point $c_0\in \Lambda$, one has $\corank(D\pi_L)(c_0)=\corank(D\pi_R)(c_0)$. The canonical relation $\Lambda$ is said to be \textit{nondegenerate} if this $\corank(D\pi_L)=0$ at all points of $\Lambda$, that is, if the differentials $D\pi_L$ and $D\pi_R$ have maximal rank. If $\dim X=\dim Y$, then $\Lambda$ is nondegenerate if and only if $\pi_L,\pi_R$ are local diffeomorphisms, and then $\Lambda$ is a local \textit{canonical graph}, i.e., locally near any $c_0\in \Lambda$, one has $\Lambda$ equal to the graph of a canonical transformation. If $\dim X> \dim Y$, then the condition that $\Lambda$ is nondegenerate is equivalent to $\pi_L$ being an immersion and $\pi_R$ being a submersion. 

We will write the order of the FIO $A\in I^m(X,Y;\Lambda)$ as $m=m_{\text{eff}}-\frac{\vert d_X-d_Y\vert}{4},$
where the effective order of $A$ is defined as
\begin{align*}
    m_{\text{eff}} := m+\frac{\vert d_X-d_Y\vert}{4}.
\end{align*}
For $s\in \R$, define the \textit{$L^2$-based Sobolev space} of 
order $s$ on $\R^d$ to be the space of all tempered distributions $f$ such that the distribution $\widehat{f}(\xi)(1+\vert\xi\vert^2)^{s/2}$ 
lies in $L^2$, and give this space the norm
\begin{align*}
    \left\Vert f\right\Vert_{L^2_s(\R^d)}:= \left(\int_{\R^d} |\widehat{f}(\xi)|^2\langle \xi\rangle^{2s}\,d\xi\right)^{1/2}, \quad \langle\xi\rangle=(1+|\xi|^2)^{1/2}.
\end{align*}
The following are the standard $L^2$-Sobolev estimates for FIOs.
\begin{samepage}
\begin{theorem}[H{\"o}rmander \cite{Hormander1971, Hormanderbook}]\label{thm: FIO_standard}
    Suppose that $\Lambda\subset (T^\ast X\setminus \bzero)\times (T^\ast Y\setminus \bzero)$ is a canonical relation, where $\dim(X)=d_X$ and $\dim(Y)=d_Y$, and $A\in I^{m_{\text{eff}}-\frac{\vert d_X-d_Y\vert}{4}}(X,Y;\Lambda)$ has a compactly supported Schwartz kernel.
    \begin{itemize}
        \item[(i)] If $\Lambda$ is nondegenerate, then $A: L^2_s(X)\to L^2_{s-m_{\text{eff}}}(Y)$ for all $s\in \R$. Furthermore, the operator norm depends boundedly on a finite number of derivatives of the amplitude and phase function.
        \item[(ii)] If the spatial projections from $\Lambda$ to $X$ and to $Y$ are submersions and, for some $q$, the corank of $D\pi_L$ (and thus that of $D\pi_R$) is $\leq  q$ at all points of $\Lambda$, then $A: L^2_s(X)\to L^2_{s-m_{\text{eff}}-\frac{q}{2}}(Y)$ for all $s\in \R$. 
    \end{itemize}
\end{theorem}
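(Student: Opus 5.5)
The plan is to reduce both parts to an $L^2\to L^2$ bound for an operator of effective order $0$ (resp.\ $-q/2$) and to obtain that bound from a $TT^*$ composition. We first reduce to $s=0$. Let $\Lambda^s_X=\langle D\rangle^s$ on $X$ and $\Lambda^s_Y$ on $Y$, cut off to compact sets. By the composition calculus for pseudodifferential operators with FIOs, $\Lambda_Y^{s-m_{\text{eff}}}A\Lambda_X^{-s}$ belongs to $I^{-|d_X-d_Y|/4}(X,Y;\Lambda)$, modulo smoothing operators. Its amplitude seminorms are controlled by finitely many seminorms of $a$ and $\phi$. So it suffices to prove: if $m_{\text{eff}}=0$, then $A$ is bounded on $L^2$, with norm depending on finitely many derivatives. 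Using a partition of unity in $\Lambda$, we may assume $K_A$ is a single oscillatory integral with a phase $\phi$ parametrizing a small conic piece of $\Lambda$.

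\emph{Part (i).} Assume $d_X\geq d_Y$; the other case follows by passing to the adjoint. Nondegeneracy then means $\pi_L$ is an immersion and $\pi_R$ is a submersion. We consider $AA^*$, whose canonical relation is $\Lambda\circ\Lambda^t$. The submersion and immersion properties imply the composition is clean, with excess $e=d_X-d_Y$. We will verify that $\Lambda\circ\Lambda^t$ is contained in the diagonal of $T^\ast Y\setminus\bzero$: its fibres are the fibres of $\pi_R$, and because $\pi_L$ is injective on them, they map to a single point. The clean composition theorem then gives
\[
AA^*\in I^{2m+e/2}(\Delta_{T^*Y})=\Psi^{2m+(d_X-d_Y)/2}(Y)=\Psi^{2m_{\text{eff}}}(Y)=\Psi^0(Y).
\]
The symbol of $AA^*$ is controlled by finitely many derivatives of $a$ and $\phi$. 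Calder\'on--Vaillancourt then gives $\|AA^*\|_{L^2\to L^2}<\infty$, hence $\|A\|_{L^2\to L^2}<\infty$. Alternatively, one can write the kernel of $AA^*$ directly and apply stationary phase in the $d_X-d_Y$ excess directions.

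\emph{Part (ii).} Here $D\pi_L$ and $D\pi_R$ may drop rank by up to $q$, so $\Lambda\circ\Lambda^t$ need not be clean. Instead we will show directly that $A$ maps $L^2$ to $L^2_{-q/2}$ when $m_{\text{eff}}=0$. We will decompose $A$ dyadically in the frequency variable, $A=\sum_k A_k$ with $|\theta|\sim 2^k$, and aim for the bound
\[
\|A_k\|_{L^2\to L^2}\lesssim 2^{kq/2}.
\]
Almost orthogonality in $k$ then sums these pieces into the Sobolev estimate. For each fixed $k$ we estimate the kernel of $A_kA_k^*$, an oscillatory integral in $(x,\theta,\theta')$. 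Corank $\leq q$ means the relevant mixed Hessian of $\phi$ has rank at least $N-q$ in the appropriate block, where $N$ is the full rank in the nondegenerate case. We will choose local coordinates splitting the variables into a nondegenerate block of size $N-q$ and a remaining block of size at most $q$. Integrating by parts only in the nondegenerate block yields the decay of a nondegenerate FIO in those directions, at the cost of a factor $2^{kq}$ from the undifferentiated $q$ directions. The hypothesis that the spatial projections are submersions ensures that the kernel is integrable in the remaining spatial variables. The Schur test then gives $\|A_kA_k^*\|\lesssim 2^{kq}$.

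The main obstacle is this last step. We must choose coordinates, uniformly on each conic piece, that isolate the rank-$(N-q)$ block, and we must justify integration by parts when the corank is not constant, so that $\Lambda$ need not have any stratified structure. To handle this, we will use lower semicontinuity of rank to pick the nondegenerate $(N-q)$-minor locally. Since only the upper bound $\leq q$ on the corank is used, the estimate does not require constant corank.
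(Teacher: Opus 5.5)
The paper gives no proof of this theorem; it is quoted from H\"ormander. Your reduction to $s=0$ and your argument for (i) are the standard ones and are fine. On small conic pieces $\pi_L$ is injective, so the composition defining $AA^*$ is clean with excess $d_X-d_Y$ and lies over the diagonal, which gives $AA^*\in\Psi^{2m_{\mathrm{eff}}}(Y)$.

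The gap is in (ii), at the step you flag yourself, and the mechanism you propose there fails. In the kernel of $A_kA_k^*$, integrating by parts only in a nondegenerate block does not produce decay in the corresponding output directions. The output displacement in the degenerate directions enters the phase derivative in the nondegenerate integration variables, with coefficients that vary with those variables, and this creates stationary points.

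Here is a concrete example. Take $d_X=d_Y=3$, $p=1$, and $\Phi(x,y)=x_3-y_3-x_1y_1-\tfrac12x_1^2y_2$.
\begin{itemize}
\item $D_x\Phi\neq0$ and $D_y\Phi\neq0$, so the spatial projections are submersions.
\item $\operatorname{rank}\Bphi\equiv3$, so $q=1$.
\end{itemize}
Fourier transforming in $x_3$ reduces $\calR_\bt$, modulo cutoffs, to the family
\[
T_\lambda g(y_1,y_2)=\int e^{i\lambda(x_1y_1+\frac12x_1^2y_2)}\chi(x,y)\,g(x_1,x_2)\,dx_1\,dx_2 .
\]
The kernel of $T_\lambda T_\lambda^*$ is $\int e^{i\lambda(\alpha x_1+\frac12\beta x_1^2)}\chi(x,y)\overline{\chi(x,\tilde y)}\,dx$, where $\alpha=y_1-\tilde y_1$ and $\beta=y_2-\tilde y_2$. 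The $x_1$-derivative of the phase is $\lambda(\alpha+\beta x_1)$. It vanishes inside the support whenever $|\alpha|\lesssim|\beta|$.
\begin{itemize}
\item On that region, stationary phase gives $|K|\approx(\lambda|\beta|)^{-1/2}$.
\item Hence $\sup_y\int|K|\,d\tilde y\approx\lambda^{-1/2}$.
\item In fact $\|T_\lambda T_\lambda^*\|\lesssim\lambda^{-1}$.
\end{itemize}
The same loss of $2^{k/2}$ appears for the dyadic pieces of $\calR_\bt$. The transposed example $\Phi=x_3-y_3-x_1y_1-\tfrac12x_2y_1^2$ defeats $A_k^*A_k$ in the same way, so switching sides does not help. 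The Schur test on the absolute value of the full $TT^*$ kernel therefore cannot give $\|A_kA_k^*\|\lesssim2^{kq}$.

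What is missing is freezing at the level of the operator, before any $TT^*$ step. In the example, fix $(x_2,y_2)$ and consider the one-variable operator $g\mapsto\int e^{i\lambda(x_1y_1+\frac12x_1^2y_2)}\chi\,g(x_1)\,dx_1$.
\begin{itemize}
\item The mixed derivative $\partial_{x_1}\partial_{y_1}$ of its phase equals $1$.
\item Its own $TT^*$ kernel has phase derivative $\lambda(y_1-\tilde y_1)$ and decays rapidly, so its norm is $\lesssim\lambda^{-1/2}$ uniformly in $(x_2,y_2)$.
\item Minkowski's inequality in $x_2$, followed by integration in $y_2$, then gives $\|T_\lambda\|\lesssim\lambda^{-1/2}$.
\end{itemize}
For a general FIO you must first write $A_k$, microlocally, as a superposition over $q$ frozen spatial variables of operators that are nondegenerate uniformly in the frozen parameters. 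Your proposal does not set this up. The rank condition lives on the augmented Hessian, which involves the phase variables $\theta$, and these cannot simply be frozen. This reduction is where the geometric hypotheses of (ii) do their work. The use you assign to the submersion hypothesis, integrability of the kernel in the remaining variables, is not a real use, since compact support of the kernel already gives it.
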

\end{samepage}
For classes of canonical relations for which $\pi_L$ and/or $\pi_R$ degenerate in specific ways, the loss of derivatives is often less than $q/2$. We recall the following result due to Greenleaf and Seeger \cite{Greenleaf_Seeger_1994}, for which the authors proved a $1/4$-derivative loss when one cotangent projection has Whitney fold singularities.

\begin{theorem}[\cite{Greenleaf_Seeger_1994}]\label{thm:one-sided-fold}
   Suppose $d_X=d_Y$. If one of the cotangent projections of $\Lambda$ is everywhere either a local diffeomorphism or a Whitney fold, and $A\in I^m(X,Y;\Lambda)$ has a compactly supported Schwartz kernel, then
\begin{align*}
A:L_s^2(X)\longrightarrow L_{s-m-1/4}^2(Y),
\qquad s\in\R.
\end{align*}
\end{theorem}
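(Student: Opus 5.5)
The plan is to follow the standard $TT^\ast$ strategy of Greenleaf--Seeger. By taking adjoints (the adjoint of $A$ lies in $I^m(Y,X;\Lambda^t)$, and $\Lambda^t$ interchanges the roles of $\pi_L$ and $\pi_R$), I may assume that the projection which is a fold is a fixed one, say $\pi_L$. Composing with elliptic pseudodifferential operators of order $\mp s$, I reduce to $s=0$ and $m=0$, so the claim becomes $A:L^2\to L^2_{-1/4}$. This is equivalent to
\[
\Vert A_k\Vert_{L^2\to L^2}\lesssim 2^{k/4},
\]
uniformly in $k$, where $A_k$ denotes $A$ with its symbol localized to $|\theta|\sim 2^k$. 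Since the pieces $A_k$ are almost orthogonal in $k$ (up to smoothing errors), summing the dyadic bounds gives the result. Away from the fold set $\Sigma\subset\Lambda$, the relation is a local canonical graph, so Theorem~\ref{thm: FIO_standard}(i) gives the better bound $O(1)$. The problem is therefore local near a point of $\Sigma$.

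Near $\Sigma$, I use the normal form for a one-sided fold: after canonical transformations on each factor, implemented by elliptic FIOs associated with canonical graphs (which preserve $L^2$ boundedness), the phase can be taken as $\phi(x,y,\theta)$ with $d_\theta\phi$ parametrizing $\Lambda$, and the determinant of the mixed Hessian vanishes simply on $\Sigma$. I then introduce a second dyadic decomposition in the distance to the fold,
\[
A_k=\sum_{2^{-k/2}\le 2^{-j}\le 1}A_{k,j},
\]
where $A_{k,j}$ is cut off to where this distance, measured in the appropriate variable, is $\sim 2^{-j}$. The last piece, $2^{-j}\approx 2^{-k/2}$, absorbs everything closer to the fold.

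For each piece I estimate $A_{k,j}A_{k,j}^\ast$, which is the composition through the side where the fold occurs. Its kernel is an oscillatory integral whose phase has Hessian with one eigenvalue of size $\sim 2^{-j}$ (times $2^k$) and the others of size $\sim 2^k$. Integrating by parts, or applying stationary phase, in the degenerate direction at scale $2^{-j}$ should give
\[
\Vert A_{k,j}\Vert\lesssim 2^{j/2}\cdot 2^{-j/2}\cdot\bigl(2^{j}\bigr)^{1/2}\sim 2^{j/2}\cdot(\text{width})^{1/2}.
\]
After accounting for the support width $2^{-j}$ of the cutoff, the expected net bound is $\Vert A_{k,j}\Vert\lesssim 2^{j/2}$ in the degenerate direction, with width $2^{-j/2}$. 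The model computation to check against is the single fold $x\mapsto x^2$ in one variable, which gives $\Vert A_{k,j}\Vert\lesssim \min(2^{j/2},\,2^{k/4})$. If I simply sum over $j\le k/2$, I lose a factor $\log$, and that is unacceptable.

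The main obstacle is removing this logarithm. Following Greenleaf--Seeger, I would exploit almost orthogonality in $j$ with the Cotlar--Stein lemma. The key point is that the one-sided fold structure means the other projection $\pi_R$ is a blowdown-type map which remains injective on the relevant pieces. As a result, $A_{k,j}^\ast A_{k,j'}$ and $A_{k,j}A_{k,j'}^\ast$ have norms decaying like $2^{-|j-j'|\delta}\,2^{(j+j')/4}$. Establishing this requires a careful integration by parts, using that the two phase gradients separate at rate $\max(2^{-j},2^{-j'})$. Cotlar--Stein then gives
\[
\Vert A_k\Vert\lesssim\sup_j 2^{j/2}\lesssim 2^{k/4},
\]
which completes the proof.
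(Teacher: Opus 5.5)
The paper gives no proof of this statement; it is quoted from Greenleaf--Seeger, so your sketch has to stand on its own. Your reductions are reasonable: taking adjoints, reducing to $s=m=0$, dyadic frequency localization, the canonical-graph bound away from $\Sigma$, and the decomposition $A_k=\sum_jA_{k,j}$ by distance to $\Sigma$. The problem is that the step you call the main obstacle is not an obstacle, and what you propose for it is wrong.

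Your own model computation gives $\Vert A_{k,j}\Vert\lesssim 2^{j/2}$ for $2^j\le 2^{k/2}$. With that, the triangle inequality already gives
\[
\Vert A_k\Vert\le\sum_{0\le j\le k/2}\Vert A_{k,j}\Vert\lesssim\sum_{0\le j\le k/2}2^{j/2}\lesssim 2^{k/4}.
\]
This is a geometric sum with no logarithm, so no almost orthogonality in $j$ is needed.

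The Cotlar--Stein paragraph is also incorrect in its own terms:
\begin{itemize}
\item Its premise is false. The theorem assumes nothing about $\pi_R$: it may be a fold (the Melrose--Taylor case), a blowdown, or more degenerate. A blowdown collapses a direction in $\Sigma$, so it is not injective there.
\item Your bound $\Vert A_{k,j}^\ast A_{k,j'}\Vert\lesssim 2^{-|j-j'|\delta}2^{(j+j')/4}$ is false already at $j=j'$, where it says $\Vert A_{k,j}\Vert\lesssim 2^{j/4}$. Fed into Cotlar--Stein, your bounds would give $\Vert A_k\Vert\lesssim 2^{k/8}$ up to a logarithm, a loss of only $1/8$. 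That contradicts the sharpness of $1/4$. In the fold--blowdown model with kernel $2^{k/2}e^{i2^kxz^2}\chi(x)\chi(z)$, testing on the indicator of $\{|z|\le 2^{-k/2}\}$ shows that the innermost piece has norm $\gtrsim 2^{k/4}$.
\item Cotlar--Stein bounds $\Vert\sum_jT_j\Vert$ by quantities like $\sup_j\sum_{j'}\Vert T_j^\ast T_{j'}\Vert^{1/2}$, never by $\sup_j\Vert T_j\Vert$. So your last display does not follow even from your own inputs.
\end{itemize}

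The genuine gap is the piece estimate $\Vert A_{k,j}\Vert\lesssim 2^{j/2}$ for a general one-sided fold. Given your reductions, this estimate is essentially the whole theorem, and you only assert it heuristically. The displayed arithmetic is also inconsistent: the width is $2^{-j}$ in one sentence and $2^{-j/2}$ in the next.

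For a general phase this is not a one-variable computation. Consider $A_{k,j}A_{k,j}^\ast$, the composition through the fold side. Write $x$ for the integration variable and $z,z'$ for the outer variables, and set $\Psi=\phi(x,z')-\phi(x,z)$.
\begin{itemize}
\item The critical set of $\Psi$ has two components: the diagonal, and the graph of the involution exchanging the two sheets of $\pi_L$.
\item Near each component, $|\nabla_x\Psi|$ is only about $2^{-j}$ times the separation in the direction of $\Ker D\pi_L$.
\item The higher derivatives of $\Psi$ need not be correspondingly small.
\item The cutoffs to $\{\operatorname{dist}(\cdot,\Sigma)\approx 2^{-j}\}$ generally depend on $x$.
\end{itemize}
For example, take $\phi=xz^2+x^2z$, which folds along $z=-x$. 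Near the diagonal on the $j$-th piece, $\partial_x\Psi=(z'-z)(z+z'+2x)\approx 2^{-j}(z'-z)$, but $\partial_x^2\Psi=2(z'-z)$. Moreover, integrating by parts in $x$ differentiates the cutoff $\chi(2^j(x+z))$ at cost $2^j$. So each straightforward integration by parts loses a power of $2^j$.

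A proof has to offset these losses, for instance by rescaling to the natural $2^{-j}$ scale and exploiting the matching thinness of the supports. It also has to handle both components of the critical set. Your proposal contains none of this, so as it stands it reduces the theorem to an unproved estimate that is essentially the theorem itself.
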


\subsection{Dimensions of sets}
We recall Frostman's lemma and the energy criterion for Hausdorff dimension.
For a set $A\subset\R^d$, let $\mathcal M(A)$ denote the
nonzero finite positive Borel measures with compact support contained
in $A$.
\begin{lemma}[Frostman's lemma, Theorem 2.7, \cite{Mattila2015}]
Let $0\leq  s\leq  d$.
For a Borel set $A\subset \mathbb{R}^d$, the $s$-dimensional Hausdorff measure of $A$ is positive if and only if there exists a probability measure $\mu\in\mathcal{M}(A)$ satisfying
\begin{align}\label{eq: frostman}
    \mu(B(x,r))\lesssim r^s, \quad \forall\, x\in \mathbb{R}^d,\,\,r>0.
\end{align}
In particular,
\begin{align*}
    \dim_{H} A
    &= \sup\left\{  s\leq  d: \exists \mu\in \mathcal{M}(A) \text{ such that } \eqref{eq: frostman} \text{ holds}\right\}.
\end{align*}
\end{lemma}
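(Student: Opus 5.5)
The direction from the measure to positive Hausdorff measure is the mass distribution principle, and I will dispose of it first. Suppose $\mu\in\mathcal M(A)$ is a probability measure with $\mu(B(x,r))\le Cr^s$ for all $x$ and $r$. Let $\{E_j\}$ be any countable cover of $A$ by sets of diameter $d_j$. Each $E_j$ meeting $\supp\mu$ is contained in a ball $B(x_j,d_j)$ with $x_j\in E_j$. Hence
\[
1=\mu(A)\le\sum_j\mu(B(x_j,d_j))\le C\sum_j d_j^s .
\]
Taking the infimum over covers with $d_j\le\delta$ and letting $\delta\to0$ gives $\mathcal H^s(A)\ge 1/C>0$. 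The formula for $\dim_H A$ then follows from two facts. First, $\mathcal H^s(A)>0$ for every $s<\dim_HA$, which the converse direction turns into a Frostman measure of exponent $s$. Second, the existence of a Frostman measure of exponent $s$ forces $\mathcal H^s(A)>0$, hence $s\le\dim_HA$.

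For the converse I will first treat compact $A$ using the standard dyadic construction, as in Mattila. Assume $\mathcal H^s(A)>0$ and consider the dyadic net content
\[
\mathcal H^s_\infty{}'(A)=\inf\Big\{\sum_Q \ell(Q)^s\Big\},
\]
where the infimum is over covers of $A$ by dyadic cubes $Q$ of side length $\ell(Q)$. This content is comparable to $\mathcal H^s_\infty(A)$, and $\mathcal H^s_\infty(A)>0$ because $\mathcal H^s(A)>0$. Normalize $A\subset[0,1)^d$ and build the measure in three steps.
\begin{enumerate}
\item At generation $n$, put mass $2^{-ns}$, spread uniformly, on each dyadic cube of side $2^{-n}$ that meets $A$. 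Call the result $\mu_n^n$.
\item Pass upward through generations $k=n-1,\dots,0$. Whenever a cube $Q$ of side $2^{-k}$ carries mass exceeding $\ell(Q)^s$, rescale the measure on $Q$ so that its mass equals $\ell(Q)^s$. The resulting $\mu_n$ satisfies $\mu_n(Q)\le\ell(Q)^s$ for every dyadic $Q$ with $\ell(Q)\ge 2^{-n}$.
\item Check the lower bound on total mass. Every point of $A$ lies in a maximal dyadic cube $Q$ with $\mu_n(Q)=\ell(Q)^s$. These maximal cubes are disjoint and cover $A$, so $\|\mu_n\|\ge\mathcal H^s_\infty{}'(A)>0$.
\end{enumerate}

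Next, normalize $\mu_n/\|\mu_n\|$ and extract a weak-$*$ limit $\mu$. Then $\mu$ is supported in $A$, since $A$ is compact and $\supp\mu_n$ lies in the $2^{-n}\sqrt d$-neighbourhood of $A$. It also satisfies $\mu(Q)\lesssim\ell(Q)^s$ on open dyadic cubes, using weak limits on open sets and a slight enlargement. A ball $B(x,r)$ is covered by at most $3^d$ dyadic cubes of side comparable to $r$, which yields $\mu(B(x,r))\lesssim r^s$.

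The main obstacle is extending from compact to Borel $A$. The weak-$*$ limit argument needs compactness to keep the support inside $A$. I will reduce to the compact case via the theorem of Davies, as presented in Mattila: a Borel (indeed analytic) set with $\mathcal H^s(A)>0$ contains a compact $K$ with $0<\mathcal H^s(K)<\infty$. A Frostman measure on $K$ is then one on $A$. I will quote this inner regularity result rather than reprove it, since its proof requires increasing-sets lemmas for weighted net measures. This is precisely why the lemma is stated with a citation to \cite{Mattila2015}.
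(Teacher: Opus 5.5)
Your proposal is correct and follows the argument of the cited source. The paper gives no proof of its own, and Mattila's proof is exactly yours: the mass distribution principle for one direction, the dyadic capping construction with a weak-$*$ limit for compact $A$, and Davies' theorem (a Borel set of positive $\mathcal H^s$-measure contains a compact subset of positive measure) to pass to Borel sets. The only step that deserves an extra line is your claim in Step 3. It holds because, for $x\in A$, the coarsest cube in the dyadic chain through $x$ at which a rescaling occurred (or the generation-$n$ cube, if none occurred) ends with mass exactly $\ell(Q)^s$.
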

Frostman's lemma yields that for any exponent $0<s< \dim_H (A)$, there exists a probability measure $\mu$ on $A$ satisfying \eqref{eq: frostman}. 
For $0<s<d$, the \textit{$s$-energy integral} of a measure $\mu\in \mathcal{M}(\R^d)$ (see \cite{Mattila2015})
is
\begin{align*}
    I_s(\mu)\coloneqq\iint\vert x-y\vert^{-s}\,d\mu(x)d\mu(y) = c(d,s)\int\vert\widehat{\mu}(\xi)\vert^2\vert\xi\vert^{s-d}d\xi.
\end{align*}
If $\mu$ is a Frostman measure satisfying $\eqref{eq: frostman}$, then $I_t(\mu)<\infty$ for all $t\in (0,s)$. 
Since $\widehat\mu$ is bounded, this implies
\begin{align*}
    \mu\in L_{(t-d)/2}^2(\R^d).
\end{align*}

\section{Pinned \texorpdfstring{$2$}{2}-point configuration sets}\label{sec: pinned-2points_configurations}
Let $d_X,d_Y\geq  2$, and $p\geq  1$. Let $X\subset \R^{d_X}$, $Y\subset \R^{d_Y}$ be open sets, and assume $\Phi\in C^\infty(X\times Y, \R^p)$ satisfies
\begin{align*}
    \rank(D_x\Phi)=p, \quad \rank(D_y\Phi)=p, \quad\quad \forall(x,y)\in X\times Y.
\end{align*}
Let $\mu\in \mathcal{M}(X)$, $\lambda\in \mathcal{M}(Y)$ be probability measures on $X$ and $Y$, respectively.
Let $\chi\in C_0^\infty(X\times Y)$ be a compactly supported smooth function such that $\chi\geq 0$ and $\chi=1$ in a neighborhood of $\supp\mu\times\supp\lambda$. Let $W \subset\R^p$ be a bounded open set such that $\Phi(\supp (\chi))\subset W$.
For each $y\in Y$, define a measure $\nu_y$ by 
\begin{align*}
    \int_{\R^p}\varphi(\bt)d\nu_y(\bt)=\int_X\chi(x,y)\varphi(\Phi(x,y))d\mu(x).
\end{align*}
In other words, $\nu_y$ is the pushforward measure of $\chi(\cdot,y)\mu$ on $X$ under the map $x\mapsto \Phi(x,y)$. 

Recall that the family of generalized Radon transforms $\calR_\bt$ associated with $\Phi$ is defined as follows. For each $\bt\in \R^p$, for all $f\in C_0^\infty(X)$,
\begin{align*}
    \calR_\bt f(y)=\int_{\{\Phi(x,y)=\bt\}}f(x)\chi(x,y)d\sigma_{y,\bt}(x), \quad y\in Y
\end{align*}
where $\sigma_{y,\bt}$ is the induced Leray measure of $\{x\in X: \Phi(x,y)=\bt\}$.  Note that in the sense of distributions, $\nu_y(\bt)=\calR_\bt\mu(y)$, for $\bt\in W$, $y\in Y$.

We discuss some properties of the family $\{\calR_\bt\}_{\bt\in W}$. For each $\bt\in W$, the operator $\calR_\bt$ is a Fourier integral operator with Schwartz kernel
\begin{align*}
    K_\bt(x,y)=\int_{\R^p}e^{2\pi i \tau \cdot (\Phi(x,y)-\bt)}\chi(x,y) d\tau
\end{align*}
where $\chi(x,y)$ is a classical symbol of order $0$. The operator $\calR_\bt$ has order
\begin{align*}
    m=0+\frac{p}{2}-\frac{d_X+d_Y}{4},
\end{align*}
and effective order $m_{\mathrm{eff}}=m+\frac{|d_X-d_Y|}{4}=\frac{p}{2}-\frac{\min\{d_X,d_Y\}}{2}$.

The canonical relation associated to $\calR_\bt$ is
\begin{align*}
    \Lambda_\bt:=\left\{\left(x, \sum_{j=1}^p d_x\Phi_j(x,y)\tau_j; y,-\sum_{j=1}^pd_y\Phi_j(x,y)\tau_j\right): (x,y)\in Z_\bt,\tau\in \R^p\setminus \bzero\right\},
\end{align*}
where the incidence relation $Z_\bt$ is defined by
$Z_\bt=\{(x,y)\in X\times Y: \Phi(x,y)=\bt\}$.
We assume that there is a smoothing order $\alpha_\Phi$, $0\leq  \alpha_\Phi\leq \frac{\min\{d_X,d_Y\}-p}{2}$, determined by the geometry of the canonical relations $\Lambda_\bt$, such that, 
for every $s\in \R$, one has, 
\begin{align}\label{eq: smoothing_condition}
     \calR_\bt: L^2_s(X)\to L^2_{s+\alpha_\Phi}(Y),
\end{align}
uniformly for $\bt\in W$.

\begin{samepage}
Theorem \ref{thm: main_pinned_configuration} follows from the following proposition.
\begin{proposition}\label{prop: pinned_configuration}
Suppose that the operators $\calR_\bt$ satisfy \eqref{eq: smoothing_condition} with some smoothing order $\alpha_\Phi$, $0\leq  \alpha_\Phi\leq \frac{\min\{d_X,d_Y\}-p}{2}$. Assume that $\mu$ and $\lambda$ are Frostman measures on $X$ and $Y$ with Frostman exponents $s_\mu$ and $s_\lambda$, such that $I_{s_\mu}(\mu)<\infty$ and $I_{s_\lambda}(\lambda)<\infty$, respectively. Set
    \begin{align*}
        \eta=\alpha_\Phi+\frac{s_\mu-d_X}{2}+\frac{s_\lambda-d_Y}{2}.
    \end{align*}
    Then the following statements hold.
\begin{itemize}
    \item[(i)] If $\eta>0$, then for $\lambda$-a.e. $y$, $\nu_y$ has an $L^2$ density and therefore $\calL^p(\supp(\nu_y))>0$.
    \item[(ii)] If $\eta>\frac{p}{2}$, then for $\lambda$-a.e. $y$, $\nu_y$ has a continuous density and therefore $\supp(\nu_y)$ has nonempty interior.
\end{itemize} 
\end{proposition}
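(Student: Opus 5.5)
The plan is to bound a Sobolev norm of $\nu_y$ in $\bt$, averaged over $y$ against $\lambda$, by the Sobolev smoothing \eqref{eq: smoothing_condition} and duality. Fix $\sigma\ge 0$; we aim to show
\[
\int \|\nu_y\|_{L^2_\sigma(\R^p)}^2\,d\lambda(y)<\infty
\qquad\text{whenever } \sigma<\eta .
\]
Taking $\sigma=0$ when $\eta>0$ gives an $L^2$ density for $\lambda$-a.e.\ $y$, which is (i). Taking some $\sigma\in(p/2,\eta)$ when $\eta>p/2$ gives, by Sobolev embedding $L^2_\sigma(\R^p)\subset C(\R^p)$, a continuous density. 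A continuous, nonnegative, nonzero density is positive on an open set, and $\nu_y\neq0$ since $\chi=1$ near $\supp\mu\times\supp\lambda$. Since $\supp\nu_y\subset\Phi_y(\supp\mu)$, this gives (ii).

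To compute, write $\widehat{\nu_y}(\tau)=\int e^{-2\pi i\tau\cdot\Phi(x,y)}\chi(x,y)\,d\mu(x)$, so that
\[
\int\|\nu_y\|_{L^2_\sigma}^2\,d\lambda(y)=\int\!\!\int \langle\tau\rangle^{2\sigma}|\widehat{\nu_y}(\tau)|^2\,d\tau\,d\lambda(y).
\]
It is cleaner to work on the $\bt$ side. Let $\Lambda_\sigma=(1-\Delta_\bt)^{\sigma/2}$, and observe that $\nu_y(\bt)=\calR_\bt\mu(y)$. We smooth first: replace $\mu$ by $\mu_\e=\mu*\phi_\e$ (and similarly handle $\lambda$), prove bounds uniform in $\e$, and pass to the limit by weak-$*$ compactness and Fatou's lemma.

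For integer $\sigma$, the derivatives $\partial_\bt^\beta\calR_\bt$ are again FIOs associated with the same $\Lambda_\bt$, of order $|\beta|$ higher. We expect them to obey $\partial_\bt^\beta\calR_\bt:L^2_s\to L^2_{s+\alpha_\Phi-|\beta|}$ uniformly in $\bt$. This is the main obstacle: the hypothesis is stated only for $\calR_\bt$ itself. To handle it, I would use that the rank condition on $D_x\Phi$ lets one write $\partial_{\bt}$ as a vector field $V$ in $x$ transversal to the fibers, $\partial_{t_j}\calR_\bt f=\calR_\bt(V_j f)+\calR_\bt(c_j f)$ with smooth $c_j$, after adjusting $\chi$. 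Iterating reduces the derivative bounds to \eqref{eq: smoothing_condition} composed with differential operators of order $|\beta|$ in $x$. Non-integer $\sigma$ then follows by interpolation.

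Granting this, we get for each $\bt$
\[
\int |\partial_\bt^\beta\calR_\bt\mu(y)|^2\,d\lambda(y)=\bigl\langle \partial^\beta_\bt\calR_\bt\mu,\ \overline{\partial^\beta_\bt\calR_\bt\mu}\,\lambda\bigr\rangle .
\]
We estimate this by duality, pairing $L^2_{s_1}$ with $L^2_{-s_1}$. Take $\mu\in L^2_{(s_\mu-d_X)/2}$ and $\lambda\in L^2_{(s_\lambda-d_Y)/2}$. Here the finite energies with the exponents $s_\mu,s_\lambda$ are given, so these memberships hold exactly, up to a harmless $\e$ in the exponents. Rather than bounding $|F|^2\lambda$ directly, it is better to write the whole quantity as
\[
\iint K(x,x')\,d\mu(x)\,d\mu(x'),
\]
or equivalently to use the bilinear form
\[
\int\!\!\int \Lambda_\sigma\nu_y(\bt)\,\overline{\nu_y(\bt)}\,d\bt\,d\lambda(y)=\int \bigl\langle \Lambda_{2\sigma}\calR_\bt\mu\,\overline{\calR_\bt\mu},\lambda\bigr\rangle\,d\bt .
\]
The cleanest route is to bound $\int\|\nu_y\|^2_{L^2_\sigma}\,d\lambda$ via the pairing $\langle\mathcal T\mu,\mu\otimes\lambda\rangle$ in the style of \cite{GreenleafIosevichTaylor2021}, where $\mathcal T$ is an FIO in $(x,x',y)$. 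Its Sobolev mapping follows from \eqref{eq: smoothing_condition} with total gain $2\alpha_\Phi-2\sigma$ distributed between the two copies of $\mu$ and the $\lambda$ variable.

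Matching exponents yields finiteness exactly when $2\sigma<2\alpha_\Phi+(s_\mu-d_X)+(s_\lambda-d_Y)$, that is, when $\sigma<\eta$. Carrying out this bookkeeping is the heart of the argument. I expect the delicate point to be justifying the trilinear duality with $\lambda$ only a measure; for this I would smooth $\lambda$ as well and use uniform bounds.
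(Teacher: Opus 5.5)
The gap is in the step that carries the whole proposition: controlling $\int|h|^2\,d\lambda$ for $h=\calR_\bt\mu$ (or $\partial_\bt^\beta\calR_\bt\mu$).

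\textbf{The trilinear bound is asserted, not derived.} You use $\lambda$ only through $\lambda\in L^2_{(s_\lambda-d_Y)/2}$. You then claim that the trilinear operator $\mathcal T$ in $\langle\mathcal T\mu,\mu\otimes\lambda\rangle$ has a Sobolev bound with total gain $2\alpha_\Phi-2\sigma$ that follows from \eqref{eq: smoothing_condition}. It does not. The hypothesis \eqref{eq: smoothing_condition} is a linear $L^2$-Sobolev estimate. Pairing $y\mapsto\int\calR_\bt f(y)\,\overline{\calR_\bt g(y)}\,d\bt$ against an element of $L^2_{-c}(Y)$ instead requires putting a product of two functions of $y$ into $L^2_c(Y)$, and Sobolev product estimates lose about $d_Y/2$ derivatives.

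\textbf{The exponent count does not match your setup.} In your quantity $\lambda$ enters linearly, so $L^2$-duality spends $(s_\lambda-d_Y)/2$ only once, yet you write $s_\lambda-d_Y$. That is the count for the unpinned pairing $\int|\langle\calR_\bt\mu,\lambda\rangle|^2\,d\bt$ of \cite{GreenleafIosevichTaylor2021}, in which $\lambda$ appears twice.

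\textbf{Finite energy cannot supply the missing derivatives.} It does not even give the trace bound $\|h\|_{L^2(d\lambda)}\lesssim\|h\|_{L^2_r(Y)}$ at the needed level $r\approx(d_Y-s_\lambda)/2$. Take the probability density $\rho=c|y|^{-\gamma}$ on the unit ball, with $\gamma<d_Y/2$. It lies in $L^2$, so $I_s(\rho\,dy)<\infty$ for every $s<d_Y$. Yet $\int|h|^2\rho\,dy=\infty$ for some compactly supported $h\in L^2_r$ whenever $2r<\gamma$: take $h=|y|^{-\theta}$ near $0$ with $d_Y-\gamma\le 2\theta<d_Y-2r$. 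So the delicate point is not justifying a trilinear pairing with $\lambda$ a measure. It is that the missing derivatives cannot be recovered from the $L^2$-Sobolev membership of $\lambda$ by duality.

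\textbf{The missing idea.} You need the Frostman ball condition on $\lambda$, entering through the trace inequality of Lemma \ref{lem: Frostman_LP}(ii). Summed over Littlewood--Paley pieces, it gives $\|h\|_{L^2(d\lambda)}\lesssim\|h\|_{L^2_r(Y)}$ for every $r>(d_Y-s_\lambda)/2$, as in \eqref{eq:trace-lambda}. This spends $(d_Y-s_\lambda)/2$ derivatives on each factor of $|h|^2$, which is why a single copy of $\lambda$ yields the full $s_\lambda-d_Y$ in $2\eta$. With it your scheme closes directly. Granting your derivative bounds,
\[
\int|\partial_\bt^\beta\calR_\bt\mu|^2\,d\lambda\lesssim\|\partial_\bt^\beta\calR_\bt\mu\|_{L^2_r}^2\lesssim\|\mu\|_{L^2_{r-\alpha_\Phi+|\beta|}}^2 .
\]
The condition $r-\alpha_\Phi+|\beta|\le(s_\mu-d_X)/2$ is achievable with some $r>(d_Y-s_\lambda)/2$ exactly when $|\beta|<\eta$.

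\textbf{How the paper gets the $\bt$-regularity.} The paper argues this way but handles the $\bt$-derivatives differently. It splits $\mu=\sum_j\mu_j$ and uses nonstationary phase, via $|(D_x\Phi)^T\tau|\ge c|\tau|$. This shows the $\bt$-Fourier transform of $\calR_\bt\mu_j(y)$ decays rapidly for $|\tau|>C_12^j$, so the $L^2_\beta$ norm in $\bt$ costs only $2^{j\beta}$. The trace estimate is then applied to output Littlewood--Paley pieces together with Lemma \ref{lem: input_output_frequency}.

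That route also avoids two soft spots in your derivative step:
\begin{enumerate}
\item Vector fields $V_j$ with $V_j\Phi_k(\cdot,y)=\delta_{jk}$ must depend on $y$. So $\partial_{t_j}\calR_\bt f$ is a generalized Radon transform with a different amplitude acting on $\nabla_xf$, not $\calR_\bt(V_jf)$, while \eqref{eq: smoothing_condition} is assumed only for the amplitude $\chi$.
\item For (ii) you must interpolate operator bounds rather than the final averaged quantity, since $\lceil\sigma\rceil$ may exceed $\eta$.
\end{enumerate}
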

\end{samepage}
Next, in each ambient space $\R^n$ ($n=d_X,d_Y$), we choose a smooth cutoff function $\phi\in C_0^\infty(\R^{n})$ such that $\phi\equiv 1$ on $B(0,1)$, and $\supp \phi\subset B(0,2)$. Put $\psi(\xi)=\phi(\xi)-\phi(2\xi)$, for $\xi\in \R^{n}$, then $\psi$ is supported in $\{1/2\leq  |\xi|\leq  2\}$. Put $\psi_0=\phi$, and $\psi_j=\psi(2^{-j}\xi)$, for all $j\geq  1$, then we have
\begin{align*}
    \psi_0(\xi)+\sum_{j=1}^\infty \psi_j(\xi)=1, \quad \supp(\psi_j)\subset \{2^{j-1}\leq  |\xi|\leq  2^{j+1}\}, \forall j\geq  1.
\end{align*}
For $f\in\mathcal S'(\R^n)$, for each $j\geq 0$, denote $f_j=P_jf$ as the $j$-th Littlewood--Paley projection of $f$, i.e.,
\begin{align*}
    \widehat{P_jf}(\xi)=\hat{f}(\xi)\psi_j(\xi), \quad \xi\in \R^n.
\end{align*}
We will use superscripts $P^X_j$, $P^Y_j$ to distinguish the ambient spaces when necessary. From the bounded overlap of the annuli and Plancherel, one has, for every $s\in \R$,
\begin{align*}
    \Vert f\Vert_{L_s^2}^2\approx \sum_{j\geq  0}2^{2js}\Vert P_jf\Vert_{L^2}^2.
\end{align*}
We use the Frostman estimate and the input--output frequency argument of \cite[Lemmas 2.1 and 2.3]{Palsson_Psaromiligkos_2026}.
The latter extends to the present vector-valued setting.

\begin{lemma}\label{lem: Frostman_LP}
    Let $0<s<n$. Let $\mu\in \mathcal{M}(\R^n)$ be an $s$-Frostman measure, so that
        \begin{align*}
            \mu(B(z,r))\leq  C_\mu r^s, \quad \forall z\in \R^n, r>0.
        \end{align*}
    \begin{itemize}
        \item[(i)] If $I_s(\mu)<\infty$, then for each $j\geq  0$,
        \begin{align*}
            \Vert P_j\mu\Vert_{L^2(\R^{n})}\lesssim 2^{j(n-s)/2}I_s(\mu)^{1/2},
        \end{align*}
        where $P_j\mu$ is the $j$-th Littlewood--Paley projection of $\mu$ in $\R^n$.
        \item[(ii)] If $\widehat{g}$ is supported where $|\xi|\lesssim 2^j$, then
        \begin{align*}
            \Vert g\Vert_{L^2(d\mu)}\lesssim C_\mu^{1/2}2^{j(n-s)/2}\Vert g\Vert_{L^2(\R^n)}.
        \end{align*}
    \end{itemize}
\end{lemma}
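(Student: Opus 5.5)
Both parts of Lemma~\ref{lem: Frostman_LP} are single-scale Fourier statements. I will prove each from the energy integral and the Frostman bound, using only the support properties of $\psi_j$.

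\textbf{Part (i).} By Plancherel,
\[
\Vert P_j\mu\Vert_{L^2}^2=\int|\widehat\mu(\xi)|^2\psi_j(\xi)^2\,d\xi .
\]
For $j\ge1$ the cutoff $\psi_j$ is supported where $|\xi|\approx 2^j$. There $|\xi|^{n-s}\approx 2^{j(n-s)}$, so
\[
\int|\widehat\mu|^2\psi_j^2\,d\xi\lesssim 2^{j(n-s)}\int|\widehat\mu(\xi)|^2|\xi|^{s-n}\,d\xi=c(n,s)^{-1}2^{j(n-s)}I_s(\mu).
\]
For $j=0$ the region is $|\xi|\le2$. Since $s<n$ we have $|\xi|^{s-n}\gtrsim1$ there, so the same inequality holds with constant $1$. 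Taking square roots gives (i). This step is routine.

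\textbf{Part (ii), reduction.} I will use duality and a kernel bound. Choose a Schwartz function $\Psi$ with $\widehat\Psi\equiv1$ on $\{|\xi|\le C2^j\}$, and set $\Psi_j(x)=2^{jn}\Psi(2^jx)$. Then $g=g*\Psi_j$. By Cauchy--Schwarz against the positive weight $|\Psi_j(x-z)|$,
\[
|g(x)|^2\le \|\Psi_j\|_{L^1}\int|g(z)|^2|\Psi_j(x-z)|\,dz\lesssim\int|g(z)|^2|\Psi_j(x-z)|\,dz .
\]
Integrating in $d\mu(x)$ and applying Fubini,
\[
\int|g|^2\,d\mu\lesssim \|g\|_{L^2}^2\,\sup_z\int|\Psi_j(x-z)|\,d\mu(x).
\]

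\textbf{Part (ii), the kernel estimate.} It remains to show
\[
\sup_z\int|\Psi_j(x-z)|\,d\mu(x)\lesssim C_\mu 2^{j(n-s)},
\]
which then gives $\|g\|_{L^2(d\mu)}^2\lesssim C_\mu 2^{j(n-s)}\|g\|_{L^2}^2$. I will use the decay $|\Psi_j(w)|\lesssim_N 2^{jn}(1+2^j|w|)^{-N}$ and split into dyadic shells $|x-z|\approx 2^{k-j}$, $k\ge0$, plus the ball $|x-z|\le2^{-j}$. On the shell at scale $2^{k-j}$ the Frostman bound gives mass at most $C_\mu 2^{(k-j)s}$, while the kernel is at most $2^{jn}2^{-kN}$. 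Summing,
\[
\sum_{k\ge0}2^{jn}2^{-kN}C_\mu2^{(k-j)s}\lesssim C_\mu2^{j(n-s)}\quad\text{for } N>s .
\]

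The only point needing care is that the implicit constants are uniform in $j$. This follows from the dilation structure $\Psi_j(x)=2^{jn}\Psi(2^jx)$, so I do not expect a genuine obstacle. Note also that (ii) uses only the Frostman constant $C_\mu$, not the finiteness of $I_s(\mu)$.
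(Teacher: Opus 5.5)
Your argument is correct. The paper does not prove this lemma itself; it imports it from \cite[Lemmas 2.1 and 2.3]{Palsson_Psaromiligkos_2026}. What you wrote is the standard proof of those facts:
\begin{itemize}
\item for (i), Plancherel restricted to the annulus $|\xi|\approx 2^j$ (or the ball $|\xi|\le 2$ when $j=0$);
\item for (ii), the reproducing identity $g=g*\Psi_j$, Cauchy--Schwarz against the weight $|\Psi_j|$, and the dyadic-shell Frostman estimate $\sup_z\int|\Psi_j(x-z)|\,d\mu(x)\lesssim C_\mu 2^{j(n-s)}$.
\end{itemize}

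One slip needs fixing. The function $\Psi$ must be chosen independent of $j$, with $\widehat\Psi\equiv1$ on $\{|\xi|\le C\}$, so that $\widehat{\Psi_j}=\widehat\Psi(2^{-j}\cdot)\equiv1$ on $\{|\xi|\le C2^j\}$. As you wrote it, with $\widehat\Psi\equiv1$ on $\{|\xi|\le C2^j\}$, $\Psi$ depends on $j$. Then the decay bound $|\Psi_j(w)|\lesssim_N 2^{jn}(1+2^j|w|)^{-N}$ and the bound on $\|\Psi_j\|_{L^1}$ would not have $j$-uniform constants.

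It is also worth stating explicitly that $g$ is continuous, since $\widehat g\in L^2$ has compact support. Hence $g=g*\Psi_j$ holds at every point, not merely almost everywhere. This matters because you integrate $|g|^2$ against $\mu$, which may be singular with respect to Lebesgue measure.
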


\begin{lemma}\label{lem: input_output_frequency}
    Let $P_j^X$ and $P_i^Y$ denote Littlewood--Paley projections in the input and output variables. There exists an integer $C_0\geq  1$ such that, for every $M\geq  0$,
    \begin{align*}
       \Vert P_i^Y\calR_\bt P_j^X\Vert_{L^2(X)\to L^2(Y)}\lesssim_M 2^{-M\max\{i,j\}},
    \end{align*}
    whenever $|i-j|>C_0$, uniformly for $\bt\in W$. 
\end{lemma}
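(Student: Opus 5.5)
We prove Lemma~\ref{lem: input_output_frequency} by writing $P_i^Y\calR_\bt P_j^X$ as an oscillatory integral operator and integrating by parts. When $|i-j|$ is large, the phase is non-stationary, and this gives the rapid decay.

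\textbf{Kernel.} Using the Schwartz kernel $K_\bt$, the operator $P_i^Y\calR_\bt P_j^X$ has kernel
\[
\mathcal K_{ij}(z,w)=\iiint e^{2\pi i\left(\tau\cdot(\Phi(x,y)-\bt)+(z-y)\cdot\eta+(x-w)\cdot\xi\right)}\,\chi(x,y)\,\psi_i(\eta)\,\psi_j(\xi)\,d\tau\,d\xi\,d\eta\,dx\,dy,
\]
with $z\in Y$ and $w\in X$. On the critical set of the phase in $(x,y)$ we have
\[
\xi=-D_x(\tau\cdot\Phi)(x,y),\qquad \eta=D_y(\tau\cdot\Phi)(x,y).
\]
On the compact set $\supp\chi$, the rank conditions \eqref{eq: rank_condition} give constants $0<c\le C$ with
\[
c|\tau|\le |D_x(\tau\cdot\Phi)|\le C|\tau|,\qquad c|\tau|\le |D_y(\tau\cdot\Phi)|\le C|\tau|,
\]
uniformly in $(x,y)$. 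These constants do not depend on $\bt$, since $\bt$ enters only through the additive term $-\tau\cdot\bt$.

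\textbf{Choosing $C_0$.} Take $C_0$ so large that $2^{C_0-2}>C/c$. Suppose $i>j+C_0$; the case $j>i+C_0$ is symmetric, with the roles of $x$ and $y$ exchanged. Split the $\tau$-integral using a smooth partition of unity into two regions.

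\emph{Region $|\tau|\ge 2^{j+2}/c$.} Here
\[
|D_x(\tau\cdot\Phi)-\xi|\gtrsim |\tau|+2^j.
\]
Integrate by parts in $x$ with the operator
\[
L=\frac{(D_x(\tau\cdot\Phi)-\xi)\cdot\nabla_x}{2\pi i\,|D_x(\tau\cdot\Phi)-\xi|^{2}}.
\]
Each application gains a factor $(|\tau|+2^j)^{-1}$. The derivatives of $\chi$ and of $\Phi$ that appear are bounded independently of $\bt$.

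\emph{Region $|\tau|\le 2^{j+2}/c$.} Here
\[
|D_y(\tau\cdot\Phi)|\le C2^{j+2}/c<2^{i-1}\le|\eta|,
\]
so $|D_y(\tau\cdot\Phi)-\eta|\gtrsim 2^i$. Integrating by parts in $y$ gains a factor $2^{-i}$ per step.

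\textbf{Operator bound.} After $N$ integrations by parts the $(x,y)$-integrand is absolutely integrable. The remaining integrals over $\tau$, $\xi$, $\eta$ run over sets of volume at most polynomial in $2^{i}$, so
\[
|\mathcal K_{ij}(z,w)|\lesssim_N 2^{-N\max\{i,j\}+O(\max\{i,j\})}.
\]
To pass to an operator bound we also need decay in $z$ and $w$. Since $\chi$ is compactly supported in $(x,y)$, integrating by parts in $\eta$ and $\xi$ yields decay like $\langle z-y\rangle^{-N}$ and $\langle w-x\rangle^{-N}$ times the same frequency gains. This suffices for Schur's test. Taking $N$ large relative to $M$ gives
\[
\Vert P_i^Y\calR_\bt P_j^X\Vert_{L^2(X)\to L^2(Y)}\lesssim_M 2^{-M\max\{i,j\}},
\]
uniformly for $\bt\in W$.

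\textbf{Main obstacle.} The delicate point is the bookkeeping in the split over $\tau$. The constant $C_0$ must depend only on the constants $c$ and $C$ from the rank conditions. The integration-by-parts gains must also dominate the polynomial volume factors from the $\tau$, $\xi$ and $\eta$ integrals. A cleaner alternative that avoids this bookkeeping is to view $P_i^Y\calR_\bt P_j^X$ as a composition of FIOs whose canonical relation is $\Lambda_\bt$. Every point of $\Lambda_\bt$ satisfies $c/C\le|\eta|/|\xi|\le C/c$, so the two frequency cutoffs have disjoint microsupports on $\Lambda_\bt$ once $|i-j|>C_0$, and the operator is smoothing with seminorms uniform in $\bt$. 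In either approach, uniformity in $\bt$ is automatic.
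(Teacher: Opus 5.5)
Your strategy is the natural one: split the $\tau$-integral, then integrate by parts in whichever of $x$ or $y$ has a non-stationary phase. (The paper gives no proof of this lemma; it only cites \cite[Lemma~2.3]{Palsson_Psaromiligkos_2026}.) However, the split you chose does not give decay in $\max\{i,j\}$. Take $i>j+C_0$. On your first region, $|\tau|\ge 2^{j+2}/c$, you use only integration by parts in $x$, which gains $(|\tau|+2^j)^{-1}\approx|\tau|^{-1}$ per step. On the portion $2^{j+2}/c\le|\tau|\ll 2^i$ this is nowhere near $2^{-i}$. After $N$ steps the $\tau$-integral over this region is $\approx 2^{-j(N-p)}$. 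Even ignoring the factor $\approx 2^{id_Y}$ from the $\eta$-integral over $|\eta|\sim 2^i$, this is $O(1)$ when $j=0$, with no decay as $i\to\infty$. So the kernel bound $|\mathcal K_{ij}|\lesssim_N 2^{-N\max\{i,j\}+O(\max\{i,j\})}$ does not follow from your two region estimates: Region I only yields decay in $\min\{i,j\}$. This is exactly the bookkeeping you flagged as delicate.

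The missing observation is that the $y$-phase stays non-stationary well beyond $|\tau|\le 2^{j+2}/c$. For all $|\tau|\le 2^{i-2}/C$ we have $|D_y(\tau\cdot\Phi)|\le 2^{i-2}$ and $|\eta|\ge 2^{i-1}$, hence $|D_y(\tau\cdot\Phi)-\eta|\ge 2^{i-2}$. Each integration by parts in $y$ then gains $2^{-i}$, since the coefficients have $y$-derivatives $O(|\tau|2^{-2i})=O(2^{-i})$. So split at $|\tau|\approx 2^{i-2}/C$ instead.

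On $|\tau|\ge 2^{i-2}/C$ you need $c|\tau|\ge 2^{j+2}$ to get $|D_x(\tau\cdot\Phi)+\xi|\ge c|\tau|/2$. This holds if $2^{i-2}/C\ge 2^{j+2}/c$, which is guaranteed for $i-j>C_0$ once $2^{C_0-3}\ge C/c$. There each $x$-step gains $|\tau|^{-1}\lesssim 2^{-i}$, and the $\tau$-integral gives $2^{-i(N-p)}$. This is the real role of $C_0$: it makes the $x$-non-stationary region $\{|\tau|\ge 2^{j+2}/c\}$ and the $y$-non-stationary region $\{|\tau|\le 2^{i-2}/C\}$ overlap. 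Together they cover $\R^p$, and on each piece the gains are powers of $2^{-\max\{i,j\}}$. Also, with your kernel the $x$-gradient of the phase is $D_x(\tau\cdot\Phi)+\xi$, not $D_x(\tau\cdot\Phi)-\xi$, so the sign in $L$ should be flipped.

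With this correction the rest of your argument goes through: the polynomial volume factors, uniformity in $\bt$, and the passage to an operator bound. The last step is simplest on the Fourier side via Plancherel. Bound the frequency-side kernel, cut off by $\psi_i(\eta)\psi_j(\xi)$, in Hilbert--Schmidt norm. This costs only $2^{(id_Y+jd_X)/2}$ and avoids the $z,w$-decay bookkeeping.

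Your microlocal alternative does not bypass the issue. The cutoffs $\psi_i$ are compactly supported rather than conic, so disjointness of microsupports is not the relevant notion. What is needed is exactly the quantitative non-stationarity above, with gains measured in $2^{-\max\{i,j\}}$.
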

We are now ready to prove Proposition \ref{prop: pinned_configuration}.
\begin{proof}[Proof of Proposition \ref{prop: pinned_configuration}]
    Fix $0\leq  \beta<\eta$.
    Our goal is to show that,
    \begin{align}\label{eq: nu_y_average_energy}
        \int_Y\Vert\nu_y\Vert_{L_{\beta}^2(\R^p)}^2d\lambda(y)<\infty,
    \end{align}
    where
    \begin{align*}
        \Vert g\Vert_{L_{\beta}^2(\R^p)}^2=\int_{\R^p}|\widehat{g}(\tau)|^2\langle \tau\rangle^{2\beta}d\tau.
    \end{align*}
    To prove \eqref{eq: nu_y_average_energy}, write $\mu=\sum_{j\geq  0}\mu_j$, where for each $j$, $\mu_j=P^X_j\mu$ is the $j$-th Littlewood--Paley projection of $\mu$. For each $j\geq  0$, $y\in Y$, and $\bt\in \R^p$, put $g_j(y,\bt)=\calR_\bt \mu_j(y)$. Note that these functions vanish outside the fixed compact set $\Phi(\supp\chi)\subset W$.
    
    By linearity of $\calR_\bt$,
    \begin{align*}
        \nu_y(\bt)=\calR_\bt\mu(y)=\sum_{j\ge 0}\calR_\bt\mu_j(y)=\sum_{j\geq  0} g_j(y,\bt),\quad\text{in }\mathcal D'(\mathbb R^p_{\bt}),
\text{ for each }y.
    \end{align*}
    For every finite $N$, the triangle inequality gives
    \begin{align*}
        \Vert \sum_{j=0}^{N} g_j\Vert_{L^2(d\lambda;L_{\beta}^2)}\leq  \sum_{j=0}^{N}\Vert g_j\Vert_{L^2(d\lambda;L_{\beta}^2)}.
    \end{align*}
    We pass to the limit after proving that the right hand series converges.
    So it suffices to bound each $\Vert g_j\Vert_{L^2(d\lambda; L_{\beta}^2)}$ and sum over $j$.
    
    Fix $j\geq  0$. To bound
\begin{align*}
\Vert g_j\Vert_{L^2(d\lambda;L_\beta^2)}^2&=
\int_Y\int_{\R^p}|\widehat g_j(y,\tau)|^2\langle\tau\rangle^{2\beta}\,d\tau\,d\lambda(y),
\end{align*}
we need to split the integral into two parts, low $\bt$-frequencies and high $\bt$-frequencies. The low $\bt$-frequencies can be controlled by an $L^2$ estimate $\Vert g_j\Vert_{L^2(d\lambda\,d\bt)}$.
To handle high frequencies, we use the rank condition and integration by parts to obtain a decay estimate for $|\widehat{g}_j(y,\tau)|$. 

First, we will give a bound for $\Vert g_j\Vert_{L^2(d\lambda\,d\bt)}$. 
For each $\bt\in W$, by Lemma \ref{lem: Frostman_LP}(ii),  one has
\begin{align*}
\Vert g_j(\cdot,\bt)\Vert_{L^2(d\lambda)}&\leq 
\sum_{i\geq 0}\Vert P_i^Y\calR_\bt\mu_j\Vert_{L^2(d\lambda)}
\\
&\lesssim\sum_{i\geq 0}2^{i\frac{d_Y-s_\lambda}{2}}\Vert P_i^Y\calR_\bt\mu_j\Vert_{L^2}\\
&\lesssim  \sum_{|i-j|\leq  C_0}2^{i\frac{d_Y-s_\lambda}{2}}\Vert P_i^Y\calR_\bt\mu_j\Vert_{L^2}+\sum_{|i-j|> C_0}2^{i\frac{d_Y-s_\lambda}{2}}\Vert P_i^Y\calR_\bt\mu_j\Vert_{L^2},
\end{align*}
where $C_0$ comes from Lemma \ref{lem: input_output_frequency}.
For the first sum, applying the smoothing assumption \eqref{eq: smoothing_condition} with $s=0$ gives
\begin{align*}
\Vert P_i^Y\calR_\bt\mu_j\Vert_{L^2}\lesssim 2^{-i\alpha_\Phi}\Vert\calR_\bt\mu_j\Vert_{L_{\alpha_\Phi}^2}
\lesssim 2^{-i\alpha_\Phi}\Vert\mu_j\Vert_{L^2}.
\end{align*}
Summing over $|i-j|\leq  C_0$, this implies that
\begin{align}\label{eq: g_j_L^2_lambda_near_diagonal}
    \sum_{|i-j|\leq  C_0} 2^{i\frac{d_Y-s_\lambda}{2}}\Vert P_i^Y\calR_\bt\mu_j\Vert_{L^2}
\lesssim
2^{j(\frac{d_Y-s_\lambda}{2}-\alpha_\Phi)}\Vert\mu_j\Vert_{L^2}.
\end{align}
For the second sum, for each $i$ such that $|i-j|>C_0$, the enlarged cutoff version of Lemma \ref{lem: input_output_frequency} yields that
\begin{align*}
\Vert P_i^Y\calR_\bt\mu_j\Vert_{L^2} \lesssim_M 2^{-M\max\{i,j\}}\Vert\mu_j\Vert_{L^2}.
\end{align*}
Choosing $M>\frac{d_Y-s_\lambda}{2}+\alpha_\Phi+2$, we obtain
\begin{align}\label{eq: g_j_L^2_lambda_off_diagonal}
\sum_{|i-j|>C_0} 2^{i\frac{d_Y-s_\lambda}{2}}\Vert P_i^Y\calR_\bt\mu_j\Vert_{L^2}
&\lesssim  2^{-j(\alpha_\Phi+1)}\Vert\mu_j\Vert_{L^2}\lesssim 2^{j(\frac{d_Y-s_\lambda}{2}-\alpha_\Phi)}\Vert\mu_j\Vert_{L^2}.
\end{align}
Combining the two estimates \eqref{eq: g_j_L^2_lambda_near_diagonal} and \eqref{eq: g_j_L^2_lambda_off_diagonal} gives
\begin{align*}
\Vert g_j(\cdot,\bt)\Vert_{L^2(d\lambda)} \lesssim 2^{j(\frac{d_Y-s_\lambda}{2}-\alpha_\Phi)}\Vert\mu_j\Vert_{L^2},
\end{align*}
uniformly in $\bt$. Since $g_j$ vanishes outside the fixed bounded set $\Phi(\supp\chi)\subset W$, one obtains
\begin{align}\label{eq: g_j_L2_bound}
\Vert g_j\Vert_{L^2(d\lambda\,d\bt)}
\lesssim 2^{j(\frac{d_Y-s_\lambda}{2}-\alpha_\Phi)}\Vert\mu_j\Vert_{L^2}.
\end{align} 
Next, we give a bound for $|\widehat{g}_j(y,\tau)|$. Write
\begin{align}\label{eq: g_j_Fourier_transform}
\widehat g_j(y,\tau) &=
\int_{\R^p}e^{-2\pi i\bt\cdot\tau} \calR_\bt\mu_j(y)\,d\bt =\int_{\R^{d_X}} K(y,\xi,\tau)\widehat{\mu_j}(\xi)\,d\xi,
\end{align}
where 
\begin{align*}
    K(y,\xi,\tau)=\int_X e^{2\pi i(x\cdot\xi-\tau\cdot\Phi(x,y))} \chi(x,y)\,dx.
\end{align*}
We note that $\tau\in \R^p$ is dual to the configuration variable $\bt$, whereas $\xi\in \R^{d_X}$ is dual to the variable $x$.

We consider the phase function $\phi(x)=x\cdot\xi-\tau\cdot\Phi(x,y)$, $x\in \R^{d_X}$. By assumption, $\rank D_x\Phi=p$, and hence the transpose $(D_x\Phi(x,y))^T:\R^p\to\R^{d_X}$ is injective.
Thus the continuous function
\begin{align*}
(x,y,\omega)\longmapsto |(D_x\Phi(x,y))^T\omega|
\end{align*}
is strictly positive on the compact set $\supp\chi\times S^{p-1}$.
Consequently, there exists $c>0$ such that
\begin{align*}
|(D_x\Phi(x,y))^T\tau|\geq  c|\tau|, \qquad (x,y)\in\supp\chi,\quad \tau\in\R^p.
\end{align*}
Choose a fixed constant $C_1\geq \max\{1,4/c\}$. If $|\tau|>C_12^j$, then $c|\tau|\geq \frac{4}{C_1}|\tau|\geq   2^{j+2}$.

Note that the Fourier support of $\mu_j$ is contained in $\{|\xi|\leq 2^{j+1}\}$. The gradient of $\phi$ with respect to $x$ therefore satisfies
\begin{align*}
\left| \nabla_x\bigl(x\cdot\xi-\tau\cdot\Phi(x,y)\bigr) \right|
&=|\xi-(D_x\Phi(x,y))^T\tau|\geq  c|\tau|-|\xi|\geq  c|\tau|-2^{j+1}\geq  \frac c2|\tau|.
\end{align*}
Thus $\phi(x)$ has no stationary points on $\supp\chi$ in this frequency region. 
Therefore, integrating by parts, one has, for every integer $N\geq 1$,
\begin{align*}
|K(y,\xi,\tau)| \lesssim_N |\tau|^{-N}, \qquad |\tau|>C_12^j,
\end{align*}
uniformly in $y$ and in $\xi$ on the Fourier support of $\mu_j$.

Thus, from \eqref{eq: g_j_Fourier_transform}, by Cauchy--Schwarz and Plancherel, it follows that
\begin{align}
|\widehat g_j(y,\tau)| &\lesssim_N |\tau|^{-N} \int_{|\xi|\leq 2^{j+1}} |\widehat{\mu_j}(\xi)|\,d\xi\notag\\
&\lesssim_N |\tau|^{-N} \big|\{\xi:|\xi|\leq 2^{j+1}\}\big|^{1/2}
\Vert\widehat{\mu_j}\Vert_{L^2}\label{eq: g_j_hat_pointwise_bound_high_frequency}\\
&\lesssim_N 2^{jd_X/2}|\tau|^{-N}\Vert\mu_j\Vert_{L^2}, \qquad |\tau|>C_12^j.\notag
\end{align}
We are now ready to estimate $\Vert g_j\Vert_{L^2(d\lambda;L_\beta^2)}^2$. Split the squared Sobolev norm as
\begin{align*}
\Vert g_j\Vert_{L^2(d\lambda;L_\beta^2)}^2 &= \int_Y\int_{\R^p}
|\widehat g_j(y,\tau)|^2 \langle\tau\rangle^{2\beta}\,d\tau\,d\lambda(y)=J_{\mathrm{low}}+J_{\mathrm{high}},
\end{align*}
where the two terms correspond to $|\tau|\leq  C_12^j$ and $|\tau|>C_12^j$, respectively.

On the low-frequency region, $\langle\tau\rangle^{2\beta}\lesssim2^{2j\beta}$ because $\beta\geq 0$. Thus Plancherel in $\bt$ and the preceding $L^2$ estimate \eqref{eq: g_j_L2_bound} imply
\begin{align}
J_{\mathrm{low}} &\lesssim 2^{2j\beta} \int_Y\int_{\R^p}
|\widehat g_j(y,\tau)|^2\,d\tau\,d\lambda(y)\notag\\
&= 2^{2j\beta}\Vert g_j\Vert_{L^2(d\lambda\,d\bt)}^2\label{eq: low_frequencies_estimate}\\
&\lesssim 2^{2j(\beta+\frac{d_Y-s_\lambda}{2}-\alpha_\Phi)}\Vert\mu_j\Vert_{L^2}^2.\notag
\end{align}
On the high-frequency region, we apply the pointwise decay estimate \eqref{eq: g_j_hat_pointwise_bound_high_frequency} above and obtain
\begin{align}
J_{\mathrm{high}}
&\lesssim_N 2^{jd_X}\Vert\mu_j\Vert_{L^2}^2 \int_{|\tau|>C_12^j} |\tau|^{-2N}\langle\tau\rangle^{2\beta}\,d\tau\notag\\
&\lesssim_N 2^{jd_X}\Vert\mu_j\Vert_{L^2}^2 \int_{C_12^j}^{\infty}
\rho^{2\beta-2N+p-1}\,d\rho\label{eq: high_frequencies_estimate}\\
&\lesssim_N 2^{j(d_X+2\beta+p-2N)}\Vert\mu_j\Vert_{L^2}^2\notag\\
&\lesssim_L 2^{-j2L}\Vert\mu_j\Vert_{L^2}^2,\notag
\end{align}
provided $N>L+d_X/2+\beta+p/2$, where $L>\frac{d_X-s_\mu}{2}$. 

Combining the estimates \eqref{eq: low_frequencies_estimate} and \eqref{eq: high_frequencies_estimate} and taking square roots, we obtain
\begin{align*}
\Vert g_j\Vert_{L^2(d\lambda;L^2_\beta)} \lesssim_L \left( 2^{j(\beta+\frac{d_Y-s_\lambda}{2}-\alpha_\Phi)} +2^{-jL} \right) \Vert\mu_j\Vert_{L^2}.
\end{align*}
By Lemma \ref{lem: Frostman_LP} (i),
\begin{align*}
\Vert\mu_j\Vert_{L^2} \lesssim 2^{j(d_X-s_\mu)/2}I_{s_\mu}(\mu)^{1/2}.
\end{align*}
Hence
\begin{align*}
\Vert g_j\Vert_{L^2(d\lambda;L^2_\beta)} \lesssim_L \left( 2^{-j(\eta-\beta)} + 2^{-j(L-\frac{d_X-s_\mu}{2})} \right) I_{s_\mu}(\mu)^{1/2},
\end{align*}
where
\begin{align*}
\eta = \alpha_\Phi +\frac{s_\mu-d_X}{2} +\frac{s_\lambda-d_Y}{2}.
\end{align*}
Since $\beta<\eta$ and $L>(d_X-s_\mu)/2$, summing over $j\geq  0$ gives
\begin{align*}
\sum_{j\ge0} \Vert g_j\Vert_{L^2(d\lambda;L^2_\beta)} <\infty.
\end{align*}
Thus one gets
\begin{align*}
    \int_Y\Vert \nu_y\Vert_{L_{\beta}^2(\R^p)}^2d\lambda(y)<\infty.
\end{align*}
This implies that for $\lambda$-a.e. $y\in Y$, $\Vert\nu_y\Vert_{L^2_\beta}<\infty$.

For part (i), if $\eta>0$, take $\beta=0$. Then for
$\lambda$-a.e. $y$, one has $\nu_y\in L^2(\R^p)$, so $\nu_y$ has an $L^2$ density. Since $\nu_y$ has total mass one, its support has positive Lebesgue measure.

For part (ii), if $\eta>\frac p2$, choose $\frac p2<\beta<\eta$. Then the density of $\nu_y$ belongs to $L_\beta^2(\R^p)$, and the
Sobolev embedding
\begin{align*}
L_\beta^2(\R^p)\hookrightarrow C^0(\R^p)
\end{align*}
shows that it has a continuous representative. Since this density has total mass one, it is positive on a nonempty open set. Therefore $\supp\nu_y$ has nonempty interior.
\end{proof}

\begin{remark}\label{rem:averaged-sobolev}
    In the setting of Proposition~\ref{prop: pinned_configuration}, put
$\eta=\alpha_\Phi+(s_\mu-d_X)/2+(s_\lambda-d_Y)/2$.
The preceding averaged estimate also holds for every real
$\beta<\eta$, including negative $\beta$:
\[
\int_Y\|\nu_y\|_{L_\beta^2(\R^p)}^2\,d\lambda(y)<\infty.
\]
Indeed, for $\beta<0$, split the configuration frequency into
\[
|\tau|\leq c_0 2^j,\qquad
c_0 2^j<|\tau|\leq C_1 2^j,\qquad
|\tau|>C_1 2^j,
\]
where $c_0>0$ is sufficiently small. In the first region,
$|\xi-D_x\Phi(x,y)^T\tau|\gtrsim2^j$ on the Fourier
support of $\mu_j$, so integration by parts gives rapid
decay in $j$. In the middle region,
$\langle\tau\rangle^{2\beta}\lesssim2^{2j\beta}$,
and \eqref{eq: g_j_L2_bound} applies. The last region is
treated by \eqref{eq: high_frequencies_estimate}.
The resulting bounds are summable for $\beta<\eta$,
as in the preceding proof.
\end{remark}

\section{Nonlinear projections: Transversality and rank conditions}
Let
$X\subset\R^{d_X}$ and $Y\subset\R^{d_Y}$ be open,
let $\Phi:X\times Y\to\R^p$ be smooth and satisfy the
submersion condition \eqref{eq: rank_condition} as in preceding sections. We give conditions on
$\Phi$ which imply that the transversality condition of Peres and Schlag
fails. We then compare these conditions with the nondegeneracy of the
associated generalized Radon transforms. The latter can still give
pinned configuration and slicing results.

Let $K\subset X$ be compact and let $Q\subset Y$ be open. For distinct
$x,x'\in K$ and $y\in Q$, set
\begin{equation*}
 \Psi_y(x,x')=\frac{\Phi(x,y)-\Phi(x',y)}{|x-x'|}\in\R^p,
\end{equation*}
and write
\begin{equation*}
 \calG_y(x,x')=\det\bigl(D_y\Psi_y(x,x')D_y\Psi_y(x,x')^T\bigr).
\end{equation*}
We recall the transversality condition of \cite[Definition~7.2]{Peres_Schlag_2000};
see also \cite[Chapter~18]{Mattila2015}.

\begin{definition}\label{def:PS-beta}
Let $0\leq \beta<1$. We say that $Q$ is a \emph{region of transversality
of order $\beta$} for $\Phi$ on $K$ if there is $C_\beta>0$ such that,
for every $y_1,y_2\in Q$ and distinct $x,x'\in K$,
\[
 |\Psi_{y_1}(x,x')|+|\Psi_{y_2}(x,x')|
 \leq  C_\beta|x-x'|^\beta
\]
implies
\begin{equation}\label{eq:PS-transversality}
 \calG_{y_1}(x,x')\geq  C_\beta^2|x-x'|^{2\beta}.
\end{equation}
\end{definition}

We note that $\calG_y(x,x')>0$ if and only if
$D_y\Psi_y(x,x')$ has rank $p$.
Thus the Peres-Schlag transversality condition requires
that, whenever the normalized differences are sufficiently
small, the parameter derivative has full rank, with a
quantitative lower bound on its Gram determinant.
\begin{remark}
Since $\Phi$ is smooth, the regularity assumptions in
\cite[Definition 7.2]{Peres_Schlag_2000} hold after compact
localization, by the mean value theorem. We shall therefore
focus on the transversality condition.
\end{remark}

\subsection{Degenerate collisions}
We now describe two ways in which Peres-Schlag transversality
can fail. The first involves two distinct input points whose
images coincide, but whose parameter derivatives fail to distinguish
them. The second arises when two input points approach one another
and the limiting parameter derivative loses rank. We call these
\textit{off-diagonal }and \textit{infinitesimal degenerate collisions}, respectively,
and show that both imply failure of Peres-Schlag transversality.
\begin{definition}
For distinct $x,x'\in K$ and $y\in Q$, define the
\emph{off-diagonal collision matrix}
\begin{equation*}
 \Cphi(x,x';y)=D_y\Phi(x,y)-D_y\Phi(x',y).
\end{equation*}
We call $(x,x',y)$ an \emph{off-diagonal degenerate collision} if
\begin{equation*}
 \Phi(x,y)=\Phi(x',y),\qquad \rank\Cphi(x,x';y)<p.
\end{equation*}
\end{definition}

The rank condition is equivalent to the existence of
$\tau\in\R^p\setminus\bzero$ for which
\begin{equation*}
 D_y(\tau\cdot\Phi)(x,y)=D_y(\tau\cdot\Phi)(x',y).
\end{equation*}
Thus, at an off-diagonal degenerate collision, the two scalar functions
$y'\mapsto\tau\cdot\Phi(x,y')$ and
$y'\mapsto\tau\cdot\Phi(x',y')$ have the same value and first
derivative at $y$.

\begin{proposition}\label{prop:PS-offdiag}
If $(x,x',y)$ is an off-diagonal degenerate collision, with
$x,x'\in K$ and $y\in Q$, then transversality fails on $K\times Q$
for every order $0\leq \beta<1$.
\end{proposition}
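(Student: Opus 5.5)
At the collision point itself the transversality condition already fails, with no limiting argument. The plan is to test Definition~\ref{def:PS-beta} at the triple $(x,x',y)$ with $y_1=y_2=y$. Fix $0\le\beta<1$ and any candidate constant $C_\beta>0$. Since $\Phi(x,y)=\Phi(x',y)$, we have $\Psi_y(x,x')=0$. The hypothesis
\[
|\Psi_{y_1}(x,x')|+|\Psi_{y_2}(x,x')|\le C_\beta|x-x'|^\beta
\]
therefore holds trivially, because the left side is $0$ and the right side is positive, as $x\neq x'$. Transversality would then force $\calG_y(x,x')\ge C_\beta^2|x-x'|^{2\beta}>0$.

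Next compute $D_y\Psi_y(x,x')$. Since $|x-x'|$ does not depend on $y$,
\[
D_y\Psi_y(x,x')=\frac{D_y\Phi(x,y)-D_y\Phi(x',y)}{|x-x'|}=\frac{\Cphi(x,x';y)}{|x-x'|}.
\]
By the degeneracy assumption this $p\times d_Y$ matrix has rank $<p$. Hence $D_y\Psi_y D_y\Psi_y^T$ is a singular $p\times p$ matrix, since its rank equals that of $D_y\Psi_y$. So $\calG_y(x,x')=0$, which contradicts the required strict positive lower bound.

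Because $C_\beta$ and $\beta$ were arbitrary, $Q$ is not a region of transversality of any order $0\le\beta<1$ on $K$. I do not expect any real obstacle. The only points to check are that the definition allows $y_1=y_2$, which it does since it quantifies over all $y_1,y_2\in Q$, and that $x,x'\in K$ are distinct, which is part of the definition of an off-diagonal collision.

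If one wanted the failure to be robust, with the implication failing at points arbitrarily near the collision, continuity of $\calG$ would give that as well. It is not needed for the stated proposition.
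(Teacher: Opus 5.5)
Your proof is correct and is essentially the paper's argument. The paper also takes $y_1=y_2=y$, observes $\Psi_y(x,x')=0$ so the hypothesis holds for every $C_\beta>0$, and uses $D_y\Psi_y(x,x')=\Cphi(x,x';y)/|x-x'|$ to conclude $\calG_y(x,x')=0$.
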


\begin{proof}
We have $\Psi_y(x,x')=0$ and
\[
 D_y\Psi_y(x,x')=\frac{\Cphi(x,x';y)}{|x-x'|}.
\]
Consequently,
\[
 \calG_y(x,x')=|x-x'|^{-2p}
 \det\bigl(\Cphi(x,x';y)\Cphi(x,x';y)^T\bigr)=0.
\]
Taking $y_1=y_2=y$ in Definition~\ref{def:PS-beta}, the
hypothesis holds for every $C_\beta>0$, whereas the conclusion
\eqref{eq:PS-transversality} does not.
\end{proof}

We have considered degenerate collisions between two distinct points
$x,x'\in X$. We now examine what happens as $x'$ approaches $x$.
Fix $x\in X$, $y\in Y$, and a unit vector $v\in\R^{d_X}$.
Taking $x'=x+tv$ with $t>0$, Taylor's theorem gives
\begin{align*}
\Psi_y(x,x+tv)
&=-D_x\Phi(x,y)v+O(t),\\
D_y\Psi_y(x,x+tv)
&=-D_y[D_x\Phi(x,y)v]+O(t).
\end{align*}
Thus, if $D_x\Phi(x,y)v=0$, then
\[
|\Phi(x+tv,y)-\Phi(x,y)|=O(t^2).
\]
In particular, the images of $x$ and $x+tv$ are much closer than
the two input points. If, in addition,
\[
\rank\bigl(D_y[D_x\Phi(x,y)v]\bigr)<p,
\]
then the Gram determinant of $D_y\Psi_y(x,x+tv)$ tends to zero
as $t\to0^+$. This motivates the following definition.

\begin{definition}
For $v\in\R^{d_X}\setminus\bzero$, define the $p\times d_Y$ matrix
\begin{equation*}
 \Tphi(x,y;v)=D_y[D_x\Phi(x,y)v].
\end{equation*}
We call $(x,y,v)$ an \emph{infinitesimal degenerate collision} if
\begin{equation}\label{eq:PS-tangent}
 D_x\Phi(x,y)v=0,\qquad \rank\Tphi(x,y;v)<p.
\end{equation}
Equivalently, there is $\tau\in\R^p\setminus\bzero$ such that
\begin{equation*}
 D_x\Phi(x,y)v=0,\qquad
 D_yD_x(\tau\cdot\Phi)(x,y)v=0.
\end{equation*}
\end{definition}

\begin{proposition}
Suppose $(x_0,y_0,v)$ is an infinitesimal degenerate collision,
where $x_0\in\Int(K)$ and $y_0\in Q$, $v\in \R^{d_X}\setminus\bzero$.
Then, for every neighborhood $U\subset X$ of $x_0$ and every
neighborhood $V\subset Q$ of $y_0$, the family
$\{\Phi(\cdot,y):y\in V\}$ fails Peres-Schlag transversality
of every order $0\leq \beta<1$ on $K\cap U$.
\end{proposition}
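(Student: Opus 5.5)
We will exhibit, for every $C_\beta>0$, a pair of distinct points $x,x'=x+tv\in K\cap U$ and a parameter $y_1=y_2=y_0\in V$ for which the hypothesis of Definition~\ref{def:PS-beta} holds but \eqref{eq:PS-transversality} fails. Since $x_0\in\Int(K)$, for all sufficiently small $t>0$ we have $x_0,x_0+tv\in K\cap U$, and $|x_0-(x_0+tv)|=t|v|$. Normalizing $|v|=1$ costs nothing, because both conditions in \eqref{eq:PS-tangent} are homogeneous in $v$.

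The first step checks the hypothesis. The Taylor expansion displayed before the definition gives $\Psi_{y_0}(x_0,x_0+tv)=-D_x\Phi(x_0,y_0)v+O(t)=O(t)$, with a constant uniform on a compact neighborhood. Taking $y_1=y_2=y_0$, we need $2|\Psi_{y_0}(x_0,x_0+tv)|\leq C_\beta t^\beta$. This holds for small $t$ because $Ct\leq \tfrac12 C_\beta t^\beta$ once $t^{1-\beta}<C_\beta/(2C)$, using $\beta<1$. This is exactly where the restriction $\beta<1$ enters.

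The second step shows the conclusion fails. By the same expansion, with uniformity of the remainder coming from smoothness of $\Phi$ on a compact neighborhood of $(x_0,y_0)$,
\[
D_y\Psi_{y_0}(x_0,x_0+tv)=-\Tphi(x_0,y_0;v)+O(t).
\]
The Gram determinant $\calG$ is a continuous (polynomial) function of the matrix entries. Since $\rank\Tphi(x_0,y_0;v)<p$, we have $\det(\Tphi\Tphi^T)=0$. Hence $\calG_{y_0}(x_0,x_0+tv)\to0$ as $t\to0^+$. More quantitatively, $\calG_{y_0}(x_0,x_0+tv)=O(t)$, since the determinant is Lipschitz near a fixed matrix.

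The third step compares this with the required bound $C_\beta^2t^{2\beta}$. When $0<\beta<1/2$, the estimate $O(t)$ beats $t^{2\beta}$ directly for small $t$. The main obstacle is the range $\beta\geq 1/2$ (and $\beta=0$ is covered by $o(1)$), where the crude $O(t)$ bound is not enough.

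To handle it, we will sharpen the estimate using the rank deficiency. Choose a unit $\tau$ with $\tau^T\Tphi=0$. Then the row combination $\tau^T D_y\Psi$ is $O(t)$, so one singular value of $D_y\Psi$ is $O(t)$ while the others stay bounded. This gives $\calG\lesssim t^2$, since $\det(MM^T)$ is the product of the squared singular values. Because $2\beta<2$, we get $t^2=o(t^{2\beta})$ for every $\beta<1$, so the conclusion \eqref{eq:PS-transversality} fails for small $t$.

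Finally, the neighborhoods $U,V$ were arbitrary, and the chosen points lie in $K\cap U$ with $y_0\in V$. Transversality of order $\beta$ therefore fails for the family $\{\Phi(\cdot,y):y\in V\}$ on $K\cap U$, for every $0\leq\beta<1$.
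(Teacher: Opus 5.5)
Your proposal is correct and follows the paper's proof step for step: the same test pair $x_0,\,x_0+tv$ with $y_1=y_2=y_0$, the same Taylor expansions, and the same use of a unit $\tau$ annihilating $\Tphi^T$. That $\tau$ gives $\sigma_{\min}(D_y\Psi_{y_0}(x_0,x_0+tv))=O(t)$, hence $\calG_{y_0}=O(t^2)=o(t^{2\beta})$. The intermediate crude bound $\calG=O(t)$ and the case split at $\beta=1/2$ are unnecessary, since the singular-value estimate handles all $0\le\beta<1$ at once.
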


\begin{proof}
We may assume that $|v|=1$. Let $U\subset X$ and $V\subset Q$ be neighborhoods of $x_0$ and $y_0$, respectively. Since $x_0\in\Int(K)$, for all sufficiently small $t>0$, we have
\[
x_0, x_t=x_0+tv\in K\cap U.
\]
From the smoothness of
$\Phi$, by Taylor's theorem and \eqref{eq:PS-tangent},
\begin{equation}\label{eq:PS-taylor}
\Psi_{y_0}(x_0,x_t)=O(t),\qquad
D_y\Psi_{y_0}(x_0,x_t)
=-\Tphi(x_0,y_0;v)+O(t).
\end{equation}
Since $\rank\left[\Tphi(x_0,y_0;v)\right]<p$, there exists a unit
covector $\tau\in\R^p$ such that $\Tphi(x_0,y_0;v)^T\tau=0$.
By \eqref{eq:PS-taylor}, one has
\[
\left|(D_y\Psi_{y_0}(x_0,x_t))^T\tau\right|=O(t).
\]
This yields that the smallest singular value of
$D_y\Psi_{y_0}(x_0,x_t)$ is $O(t)$.
Since the remaining singular values are bounded, we obtain
\begin{equation}\label{eq:PS-Gram-collision}
\calG_{y_0}(x_0,x_t)=\det\left(D_y\Psi_{y_0}(x_0,x_t)D_y\Psi_{y_0}(x_0,x_t)^T\right)=O(t^2).
\end{equation}
Now fix $0\leq \beta<1$ and $C_\beta>0$. Since
$t=o(t^\beta)$ and $t^2=o(t^{2\beta})$ as $t\to0^+$, by \eqref{eq:PS-taylor} and \eqref{eq:PS-Gram-collision}, we may choose $t>0$ sufficiently small that
\[
2|\Psi_{y_0}(x_0,x_t)|
\leq  C_\beta t^\beta,
\qquad
\calG_{y_0}(x_0,x_t)
<C_\beta^2t^{2\beta}.
\]
Taking $y_1=y_2=y_0\in V$, we see that the
Peres--Schlag transversality condition fails on $K\cap U$
for the parameter region $V$. Since $U$, $V$, $\beta$,
and $C_\beta$ were arbitrary, the proposition follows.
\end{proof}

\subsection{A criterion for transversality of order zero}
We showed above that off-diagonal and
infinitesimal degenerate collisions imply failure of
Peres-Schlag transversality. We now prove that, on compact
sets, these are the only possible obstructions to transversality
of order zero. We first give an equivalent quantitative
condition in terms of the normalized difference and its
parameter derivative.

For a $p\times d_Y$ matrix $M$ write
\[
 \sigma_{\min}(M)=\min_{|\tau|=1}|M^T\tau|.
\]
Thus $\sigma_{\min}(M)>0$ precisely when $M$ has rank $p$. The
following result characterizes transversality of order zero on compact
parameter regions.

\begin{proposition}\label{prop:PS-quantitative}
Let $Q_0\subset Q$ be compact. Then $\Phi$ satisfies
the Peres-Schlag transversality of order $\beta=0$ on $K\times Q_0$
if and only if there exists a constant $c>0$ such that
\begin{equation}\label{eq:PS-quantitative}
\frac{|\Phi(x,y)-\Phi(x',y)|}{|x-x'|}
+
\frac{\sigma_{\min}(\Cphi(x,x';y))}{|x-x'|}
\geq  c
\end{equation}
for all $x,x'\in K$ with $x\neq x'$ and all $y\in Q_0$.
\end{proposition}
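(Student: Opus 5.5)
The plan is to turn both sides of the equivalence into statements about the normalized matrix
\[
N_y(x,x') := D_y\Psi_y(x,x') = \frac{\Cphi(x,x';y)}{|x-x'|}.
\]
By definition $\calG_y(x,x')=\det(N_yN_y^T)$. The key step is to show that $\calG_y$ and $\sigma_{\min}(N_y)^2$ are comparable up to powers. After that, both directions follow from elementary choices of constants.

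\emph{Step 1: $N_y$ is uniformly bounded on $K\times K\times Q_0$.} Since $K\subset X$ is compact and $X$ is open, there is $\delta>0$ such that the closed $\delta$-neighborhood $K_\delta$ of $K$ lies in $X$. If $|x-x'|<\delta$, the segment $[x,x']$ lies in the ball $B(x,\delta)\subset K_\delta$. The mean value theorem applied to $x\mapsto D_y\Phi(x,y)$ then gives
\[
|\Cphi(x,x';y)|\leq \sup_{K_\delta\times Q_0}|D_xD_y\Phi|\,|x-x'|.
\]
If $|x-x'|\geq\delta$, we have $|N_y|\leq 2\sup_{K\times Q_0}|D_y\Phi|/\delta$. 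Hence $\|N_y(x,x')\|\leq\Lambda$ for some constant $\Lambda<\infty$.

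\emph{Step 2: comparison of $\calG_y$ with $\sigma_{\min}$.} Write the singular values of the $p\times d_Y$ matrix $N_y$ as $\sigma_1\geq\dots\geq\sigma_p=\sigma_{\min}(N_y)$. Then $\calG_y=\prod_k\sigma_k^2$, so
\[
\sigma_{\min}(N_y)^{2p}\leq \calG_y(x,x')\leq \Lambda^{2(p-1)}\sigma_{\min}(N_y)^2 .
\]
Note also that $|\Psi_y(x,x')|$ is exactly the first term in \eqref{eq:PS-quantitative}, and $\sigma_{\min}(N_y)$ is exactly the second term.

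\emph{Step 3 ($\Rightarrow$).} Assume transversality of order $0$ holds with constant $C_0$, and fix $x\neq x'$ and $y\in Q_0$. Take $y_1=y_2=y$ in Definition~\ref{def:PS-beta}. If $2|\Psi_y|\leq C_0$, then $\calG_y\geq C_0^2$, and the upper bound in Step 2 gives $\sigma_{\min}(N_y)\geq C_0\Lambda^{-(p-1)}$. Otherwise $|\Psi_y|>C_0/2$. Either way, \eqref{eq:PS-quantitative} holds with
\[
c=\min\{C_0/2,\;C_0\Lambda^{1-p}\}.
\]

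\emph{Step 4 ($\Leftarrow$).} Assume \eqref{eq:PS-quantitative} holds with constant $c$, and set $C_0=\min\{c/2,(c/2)^p\}$, which is at most $c/2$. Suppose $|\Psi_{y_1}|+|\Psi_{y_2}|\leq C_0$. Then $|\Psi_{y_1}|\leq c/2$, so \eqref{eq:PS-quantitative} forces $\sigma_{\min}(N_{y_1})\geq c/2$. The lower bound in Step 2 then gives
\[
\calG_{y_1}\geq (c/2)^{2p}\geq C_0^2,
\]
which is \eqref{eq:PS-transversality} with $\beta=0$.

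The only step that is not a pure constant-chase is Step 1. There one must handle nearby pairs $x,x'$, which need a mean value argument valid near $K$ even though $K$ need not be convex, separately from far-apart pairs. The $\delta$-neighborhood splitting above is the approach I would use.
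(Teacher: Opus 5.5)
Your proof is correct and follows the paper's argument essentially step for step. It uses the same singular-value comparison $\sigma_{\min}^{2p}\leq\calG_y\leq M^{2(p-1)}\sigma_{\min}^2$, the same choices of constants in both directions, and the same specialization $y_1=y_2=y$ for the forward implication. The only difference is that you justify the uniform bound on $D_y\Psi_y(x,x')$ near the diagonal, using a $\delta$-neighborhood of $K$ and the mean value theorem; the paper simply asserts this bound ``by smoothness and compactness.''
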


\begin{proof}
Suppose \eqref{eq:PS-quantitative} holds for some $c>0$.
Choose $C_0>0$ such that $C_0\leq  \frac c2$, $C_0^2\leq  \left(\frac c2\right)^{2p}$.
Let $x,x'\in K$, $x\neq x'$, and $y_1,y_2\in Q_0$ satisfy
\[
|\Psi_{y_1}(x,x')|+|\Psi_{y_2}(x,x')|\leq  C_0.
\]
Then $|\Psi_{y_1}(x,x')|\leq  c/2$, so
\eqref{eq:PS-quantitative} implies $\sigma_{\min}(D_y\Psi_{y_1}(x,x'))\geq  \frac c2$.
Since the Gram determinant is the product of the squares
of the $p$ singular values, we obtain
\[
\calG_{y_1}(x,x')
\geq  \left(\frac c2\right)^{2p}
\geq  C_0^2.
\]
Thus $\Phi$ satisfies Peres-Schlag transversality of order $\beta=0$.

Conversely, suppose that $\Phi$ satisfies Peres-Schlag
transversality of order $\beta=0$ with constant $C_0>0$.
By smoothness and compactness, there exists $M\geq 1$ such that
\begin{equation*}
\|D_y\Psi_y(x,x')\|\leq  M
\end{equation*}
for all distinct $x,x'\in K$ and $y\in Q_0$.
For such a triple $(x,x',y)$, if
$|\Psi_y(x,x')|\geq  C_0/2$, then the first term in
\eqref{eq:PS-quantitative} is bounded below by $C_0/2$ and we can choose $c=C_0/2$.
We assume otherwise that $2|\Psi_y(x,x')|<C_0$. Taking $y_1=y_2=y$
in Definition~\ref{def:PS-beta}, we obtain
\[
\calG_y(x,x')\geq  C_0^2.
\]
Let $\sigma_1\geq \cdots\geq \sigma_p=\sigma_{\min}(D_y\Psi_y(x,x'))$ be the singular
values of $D_y\Psi_y(x,x')$. Since $\sigma_j\leq M$ for all $j$, one obtains
\[
C_0^2
\leq  \calG_y(x,x')
=\prod_{j=1}^p\sigma_j^2
\leq  M^{2(p-1)}\sigma_{\min}(D_y\Psi_y(x,x'))^2.
\]
Hence
\[
\sigma_{\min}(D_y\Psi_y(x,x'))
\geq  C_0M^{-(p-1)}.
\]
Combining the two cases, we obtain
\[
|\Psi_y(x,x')|
+\sigma_{\min}(D_y\Psi_y(x,x'))
\geq 
\min\left\{\frac{C_0}{2},C_0M^{-(p-1)}\right\}>0.
\]
Since $D_y\Psi_y(x,x')=\frac{\Cphi(x,x';y)}{|x-x'|}$,
this proves \eqref{eq:PS-quantitative}. This completes the proof of the proposition.
\end{proof}

For each $x\in K$, let
$S_K(x)\subset\mathbb S^{d_X-1}$ be the set of all limiting directions of secants of
$K$ near $x$. More precisely, $v\in S_K(x)$ if there exist sequences $x_j,x'_j\in K$, with $x_j\neq x'_j$,
such that
\begin{equation}\label{eq:PS-secant-limit}
x_j,x'_j\longrightarrow x,
\qquad
\frac{x_j-x'_j}{|x_j-x'_j|}\longrightarrow v.
\end{equation}
In particular, if $x\in\Int(K)$, then $S_K(x)=\mathbb S^{d_X-1}$.

\begin{theorem}\label{thm:PS-dichotomy}
Let $Q_0\subset Q$ be compact, and suppose that $\Phi$ is smooth
in a neighborhood of $K\times Q_0$. Then $\Phi$ fails
Peres-Schlag transversality of order $\beta=0$ on $K\times Q_0$
if and only if one of the following occurs:
\begin{enumerate}[label=\textup{(\roman*)}]
\item There exist distinct points $x,x'\in K$ and $y\in Q_0$
such that $(x,x',y)$ is an off-diagonal degenerate collision.
\item There exist $x\in K$, $y\in Q_0$, and $v\in S_K(x)$
such that $(x,y,v)$ is an infinitesimal degenerate collision.
\end{enumerate}
\end{theorem}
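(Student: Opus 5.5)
We use the quantitative characterization in Proposition~\ref{prop:PS-quantitative} and argue by compactness. The plan has two directions.

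\emph{Sufficiency.} Assume (i) or (ii) holds. If (i) holds, we apply the argument of Proposition~\ref{prop:PS-offdiag} directly. At the collision we have $\Psi_y(x,x')=0$ and $\calG_y(x,x')=0$. Taking $y_1=y_2=y$ then violates Definition~\ref{def:PS-beta} for every $C_0>0$.

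If (ii) holds, take sequences $x_j,x_j'\in K$ as in \eqref{eq:PS-secant-limit}. Write $h_j=x_j'-x_j$ and $v_j=h_j/|h_j|\to v$. By Taylor's theorem, uniformly on a compact neighborhood,
\[
\Psi_y(x_j,x_j')=-D_x\Phi(x_j,y)v_j+O(|h_j|),\qquad \frac{\Cphi(x_j',x_j;y)}{|h_j|}=\Tphi(x_j,y;v_j)+O(|h_j|).
\]
Letting $j\to\infty$ and using continuity, both terms on the left of \eqref{eq:PS-quantitative} tend to zero. The first tends to $|D_x\Phi(x,y)v|=0$. The second tends to $\sigma_{\min}(\Tphi(x,y;v))=0$, since $\sigma_{\min}$ is continuous. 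So no $c>0$ works, and transversality fails by Proposition~\ref{prop:PS-quantitative}.

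\emph{Necessity.} Suppose transversality of order zero fails. By Proposition~\ref{prop:PS-quantitative} there are $x_j\neq x_j'$ in $K$ and $y_j\in Q_0$ such that
\[
|\Psi_{y_j}(x_j,x_j')|+\sigma_{\min}\bigl(\Cphi(x_j,x_j';y_j)\bigr)/|x_j-x_j'|\to0.
\]
By compactness we pass to a subsequence with $x_j\to x$, $x_j'\to x'$, $y_j\to y\in Q_0$, and $(x_j'-x_j)/|x_j'-x_j|\to v$. There are two cases.

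\textbf{Case $x\neq x'$.} The denominators $|x_j-x_j'|$ stay bounded away from zero. Continuity then gives $\Phi(x,y)=\Phi(x',y)$ and $\sigma_{\min}(\Cphi(x,x';y))=0$. This is an off-diagonal degenerate collision, which is (i).

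\textbf{Case $x=x'$.} Here $v\in S_K(x)$ by definition. The Taylor expansions above, now taken with varying base point $y_j$ and using uniform bounds on second derivatives of $\Phi$ near $K\times Q_0$, show that $\Psi_{y_j}(x_j,x_j')\to -D_x\Phi(x,y)v$. They also show that $\Cphi/|x_j-x_j'|$ converges to $\pm\Tphi(x,y;v)$. Hence $D_x\Phi(x,y)v=0$ and $\sigma_{\min}(\Tphi(x,y;v))=0$, i.e.\ $\rank\Tphi<p$. This is (ii).

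The main technical point is the uniform Taylor remainder in the second case, since the base points $x_j$ and $y_j$ move. To handle it, write
\[
\Phi(x_j',y)-\Phi(x_j,y)=\int_0^1 D_x\Phi(x_j+sh_j,y)h_j\,ds,
\]
and differentiate in $y$ under the integral. Dividing by $|h_j|$ gives integrals of continuous functions evaluated at $v_j$, and these converge by uniform continuity on a compact neighborhood. The neighborhood must be convex in $x$, or we localize so that the segments lie in the smooth domain; this is possible because $x_j,x_j'\to x$.
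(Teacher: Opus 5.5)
Your proposal is correct and follows the paper's proof essentially step for step. Necessity uses Proposition~\ref{prop:PS-quantitative} with compactness and the case split $x\neq x'$ versus $x=x'$; sufficiency uses Proposition~\ref{prop:PS-offdiag} for (i) and the fundamental-theorem-of-calculus limits $\Psi\to D_x\Phi(x,y)v$, $D_y\Psi\to\Tphi(x,y;v)$ for (ii). The only differences are cosmetic: for (ii) you conclude through Proposition~\ref{prop:PS-quantitative} rather than directly from the vanishing Gram determinants with $y_1=y_2=y$, and your orientation $h_j=x_j'-x_j$ is harmless since $S_K(x)=-S_K(x)$ and the collision conditions are invariant under $v\mapsto -v$.
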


\begin{proof}
Suppose that Peres-Schlag transversality of order zero fails.
By Proposition~\ref{prop:PS-quantitative}, there exist sequences
$x_j,x'_j\in K$, with $x_j\neq x'_j$, and $y_j\in Q_0$ such that
\[
|\Psi_{y_j}(x_j,x'_j)|
+\sigma_{\min}(D_y\Psi_{y_j}(x_j,x'_j))
\longrightarrow 0.
\]
Since $K$ and $Q_0$ are compact, after passing to subsequences,
we may assume that
\[
x_j\to x,\qquad x'_j\to x',\qquad y_j\to y,
\]
for some $x,x'\in K$ and $y\in Q_0$.

We consider two cases. First, if $x\neq x'$, then by continuity,
\[
\Psi_y(x,x')=0,\qquad
\sigma_{\min}(D_y\Psi_y(x,x'))=0.
\]
Consequently,
\[
\Phi(x,y)=\Phi(x',y),\qquad
\rank\Cphi(x,x';y)<p.
\]
Thus $(x,x',y)$ is an off-diagonal degenerate collision.

Suppose now that $x=x'$. Since the unit sphere is compact,
we may pass to a further subsequence such that
\[
v_j=\frac{x_j-x'_j}{|x_j-x'_j|}
\longrightarrow v\in S_K(x).
\]
For sufficiently large $j$, the segment joining $x_j$ and
$x'_j$ lies in $X$. By the fundamental theorem of calculus, one has
\begin{align*}
\Psi_{y_j}(x_j,x'_j)
&=\int_0^1
D_x\Phi(x'_j+t(x_j-x'_j),y_j)v_j\,dt,\\
D_y\Psi_{y_j}(x_j,x'_j)
&=\int_0^1
D_yD_x\Phi(x'_j+t(x_j-x'_j),y_j)v_j\,dt.
\end{align*}
Since $x_j,x'_j\to x$, $y_j\to y$, and $v_j\to v$,
continuity gives
\[
\Psi_{y_j}(x_j,x'_j)\longrightarrow D_x\Phi(x,y)v
\]
and
\[
D_y\Psi_{y_j}(x_j,x'_j)
\longrightarrow \Tphi(x,y;v).
\]
By our choice of sequences and the continuity of singular
values, it follows that
\[
D_x\Phi(x,y)v=0,\qquad
\rank\Tphi(x,y;v)<p.
\]
Hence $(x,y,v)$ is an infinitesimal degenerate collision, proving the first implication.

Conversely, an off-diagonal degenerate collision implies failure
of transversality by Proposition~\ref{prop:PS-offdiag}.
Suppose instead that $(x,y,v)$ is an infinitesimal degenerate
collision with $v\in S_K(x)$. By \eqref{eq:PS-secant-limit},
choose distinct $x_j,x'_j\in K$ such that
\[
x_j,x'_j\to x,\qquad
\frac{x_j-x'_j}{|x_j-x'_j|}\to v.
\]
The fundamental theorem of calculus gives
\[
\Psi_y(x_j,x'_j)\to D_x\Phi(x,y)v=0,\qquad
D_y\Psi_y(x_j,x'_j)\to\Tphi(x,y;v).
\]
Since $\rank\Tphi(x,y;v)<p$, the corresponding Gram determinants
$\calG_y(x_j,x'_j)$ tend to zero. Thus, for every $C_0>0$ and all
sufficiently large $j$,
\[
2|\Psi_y(x_j,x'_j)|\leq C_0,
\qquad \calG_y(x_j,x'_j)<C_0^2.
\]
Taking $y_1=y_2=y$ contradicts transversality of order zero.
This proves the converse without an interior-point assumption.
\end{proof}

\subsection{Nonlinear projection results beyond Peres--Schlag transversality}

We now give sufficient dimension bounds in terms of the rank of an
augmented mixed Hessian matrix associated with the phase of the
generalized Radon transform. This matrix incorporates both the
incidence constraint $\Phi(x,y)=\bt$ and the mixed second derivatives
of $\Phi$. These dimension bounds apply even when the collision criteria
above show that Peres-Schlag transversality fails.

Let $\Phi:X\times Y\to\R^p$ satisfy \eqref{eq: rank_condition},
as in the preceding sections. For a set $A\subset X$, $y\in Y$, we write
\begin{align*}
     \Phi_y(A)=\{\Phi(x,y):x\in A\}.
\end{align*}
For $\bt\in\R^p$, consider the phase
\[
 \phi_{\bt}(x,y,\tau)
 =\tau\cdot(\Phi(x,y)-\bt),
 \qquad \tau\in\R^p\setminus\bzero.
\]
Its mixed derivative in the variable groups $(\tau,y)$ and
$(\tau,x)$ is a $(p+d_Y)\times(p+d_X)$ matrix
\begin{align*}
     \Bphi(x,y,\tau)
 &:=D_{(\tau,y)}D_{(\tau,x)}\phi_{\bt}(x,y,\tau)=
 \begin{bmatrix}
 0&D_x\Phi(x,y)\\
 D_y\Phi(x,y)^T&D_yD_x(\tau\cdot\Phi)(x,y)
 \end{bmatrix}.
\end{align*}
We call $\Bphi$ the augmented mixed Hessian matrix of the phase.
It is independent of $\bt$.

\begin{theorem}\label{thm:PS-rank-threshold}
Let $A\subset X$ be compact, and write $ s=\dim_HA>0$. Set $d_{\min}=\min\{d_X,d_Y\}$.
Suppose that, for some integer $ 2p\leq  r\leq  p+d_{\min}$, one has
\begin{equation*}
 \rank(\Bphi(x,y,\tau))\geq  r
\end{equation*}
for every $(x,y)\in X\times Y$ and every
$\tau\in\R^p\setminus\bzero$. Then the following hold.

\begin{enumerate}[label=\textup{(\roman*)}]
\item If $0<s<p$, then, for every $0<u<s$,
\begin{equation}\label{eq:PS-rank-dimension}
 \dim_H\bigl\{
 y\in Y:\dim_H\Phi_y(A)\leq  u
 \bigr\}
 \leq  d_X+d_Y+p-r-s+u.
\end{equation}

\item If $s>p$, then
\begin{equation*}
 \dim_H\bigl\{
 y\in Y:\calL^p(\Phi_y(A))=0
 \bigr\}
 \leq  d_X+d_Y+2p-r-s.
\end{equation*}

\item If $s>2p$, then
\begin{equation*}
 \dim_H\bigl\{
 y\in Y:\Int(\Phi_y(A))=\emptyset
 \bigr\}
 \leq  d_X+d_Y+3p-r-s.
\end{equation*}

\item Let $B\subset Y$ be compact and put
\[
 \kappa_r
 =\dim_HA+\dim_HB-d_X-d_Y-2p+r.
\]
If $\kappa_r>0$, then, for every
$0<\e<\kappa_r$, there exists a Frostman probability
measure $\lambda_\e$ supported on $B$ such that, for
$\lambda_\e$-a.e.\ $y$,
\[
 \calL^p\left(
 \left\{\bt\in\R^p:
 \dim_H\bigl(
 A\cap\Phi(\cdot,y)^{-1}(\bt)
 \bigr)\geq \kappa_r-\e
 \right\}\right)>0.
\]
\end{enumerate}
\end{theorem}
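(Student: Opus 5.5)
The plan is to reduce everything to Proposition~\ref{prop: pinned_configuration} and Remark~\ref{rem:averaged-sobolev}, with a smoothing order coming from the rank of $\Bphi$. The first step is to turn the rank hypothesis into a corank bound for the canonical relation $\Lambda_\bt$. With the phase $\phi_\bt=\tau\cdot(\Phi-\bt)$, the cotangent projections of $\Lambda_\bt$ degenerate exactly where the mixed Hessian $\Bphi$ drops rank; this is the standard computation for Radon transforms, as in \cite{GreenleafIosevichTaylor2021}. Full nondegeneracy corresponds to $\rank\Bphi=p+d_{\min}$. So $\rank\Bphi\geq r$ gives $\corank D\pi_L\leq p+d_{\min}-r$. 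The spatial projections are submersions by \eqref{eq: rank_condition}. Theorem~\ref{thm: FIO_standard}(ii), with $m_{\mathrm{eff}}=\frac p2-\frac{d_{\min}}2$, then gives \eqref{eq: smoothing_condition} with
\[
\alpha_\Phi=\frac{d_{\min}-p}{2}-\frac{p+d_{\min}-r}{2}=\frac{r-2p}{2}\geq0 .
\]
This is uniform in $\bt\in W$, since $\Bphi$ is independent of $\bt$ and the bounds depend on finitely many derivatives. With this choice $d_X+d_Y-2\alpha_\Phi=d_X+d_Y+2p-r$, so part (iv) is exactly Theorem~\ref{thm: pinned_slicing} with $\kappa=\kappa_r$. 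Part (iv) therefore follows once that theorem is available; otherwise I would prove it directly, using the argument sketched for (i) below.

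For (ii) and (iii) I would argue by contradiction in the standard exceptional-set way. Let $E$ be the exceptional set and suppose $\dim_H E>d_X+d_Y+2p-r-s$, respectively $>d_X+d_Y+3p-r-s$. The bound is about a dimension, so we may pass to a compact $B\subset E$ of nearly the same dimension: by Davies' theorem for analytic sets, or after checking measurability of $E$, we get $\mathcal H^t(B)>0$ for some $t$ above the threshold. Choose Frostman measures $\mu$ on $A$ and $\lambda$ on $B$ with exponents just below $s$ and $t$, and with finite energies. Then $\eta=\alpha_\Phi+\frac{s_\mu-d_X}{2}+\frac{s_\lambda-d_Y}{2}$ exceeds $0$, respectively $p/2$. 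Proposition~\ref{prop: pinned_configuration} gives, for $\lambda$-a.e.\ $y$, that $\nu_y$ has an $L^2$, respectively continuous, density. Since $\supp\nu_y\subset\Phi_y(A)$, this contradicts $y\in B\subset E$ for $\lambda$-a.e.\ $y$.

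For (i), where $s<p$, I would use Remark~\ref{rem:averaged-sobolev} with negative $\beta$. Suppose $\dim_H\{y:\dim_H\Phi_y(A)\leq u\}>d_X+d_Y+p-r-s+u$. Then $\eta>(u-p)/2$, so we may choose $\beta\in\bigl(\frac{u-p}{2},\eta\bigr)$. This gives $\int\|\nu_y\|_{L^2_\beta}^2\,d\lambda<\infty$. For $\lambda$-a.e.\ $y$ it follows that $I_{u'}(\nu_y)<\infty$ with $u'=p+2\beta>u$. Since $\nu_y$ is a nonzero measure on $\Phi_y(A)$, this forces $\dim_H\Phi_y(A)\geq u'>u$, a contradiction.

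The main obstacle is the first step: justifying rigorously that $\rank\Bphi\geq r$ for all $\tau\ne0$ bounds the corank of $D\pi_L$ on $\Lambda_\bt$ by $p+d_{\min}-r$. The issue appears when $d_X\neq d_Y$, where one must check the corank convention in Theorem~\ref{thm: FIO_standard}(ii) carefully. I would write $D\pi_L$ in the coordinates $(x,y,\tau)$ on $\Sigma_\phi$ and identify its kernel with the kernel of a submatrix of $\Bphi$ modulo the $\bt$-constraint. A secondary issue is measurability of the exceptional sets, which I would handle through compact-subset approximation.
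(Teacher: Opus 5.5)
Your proposal is correct and follows essentially the same route as the paper: you obtain $\alpha_r=(r-2p)/2$ from Theorem~\ref{thm: FIO_standard}(ii), get (ii)--(iii) from Proposition~\ref{prop: pinned_configuration} by contradiction, get (i) from Remark~\ref{rem:averaged-sobolev} with negative $\beta$, and get (iv) from Theorem~\ref{thm: pinned_slicing}. The paper settles the step you flag as the main obstacle by a direct computation on $\Lambda_\bt$ in the coordinates $(x,y,\tau)$: the kernel of $D\pi_R$ is isomorphic to $\Ker\Bphi$ itself, not a submatrix, since its first block row is exactly the tangency constraint $D_x\Phi\,\dot x=0$; similarly, the kernel of $D\pi_L$ is isomorphic to $\Ker\Bphi^T$, so the corank is $p+d_{\min}-\rank\Bphi$ whether or not $d_X=d_Y$.
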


\begin{proof}
We first obtain the Sobolev smoothing order $\alpha_\Phi$ associated with the rank assumption. Recall that, for $\bt\in\R^p$, the incidence relation
\[
Z_{\bt}=\{(x,y)\in X\times Y:\Phi(x,y)=\bt\}
\]
is a smooth submanifold of codimension $p$. The associated
canonical relation is the twisted conormal bundle
\begin{align*}
\Lambda_\bt=\bigl\{&
(x,D_x\Phi(x,y)^T\tau;\,y,-D_y\Phi(x,y)^T\tau):\Phi(x,y)=\bt,\quad\tau\in\R^p\setminus\{0\}\bigr\}.
\end{align*}
This is a subset of
$(T^*X\setminus\{0\})\times(T^*Y\setminus\{0\})$.
Let $\pi_L$ and $\pi_R$ be the
cotangent projections from the canonical relation $\Lambda_\bt$
of $\calR_\bt$ onto $T^*X$ and $T^*Y$, respectively. 

Fix a point of $\Lambda_{\bt}$ parametrized by
$(x,y,\tau)$. A tangent vector
$(\dot x,\dot y,\dot\tau)$ satisfies 
\[D_x\Phi\,\dot x+D_y\Phi\,\dot y=0.\]
Since
$\pi_R(x,y,\tau)
=(y,-D_y\Phi(x,y)^T\tau)$,
this tangent vector belongs to $\Ker( D\pi_R)$ if
and only if
\begin{align*}
    \begin{cases}
        \dot y=0,\\
D_yD_x(\tau\cdot\Phi)\dot x+D_y\Phi^T\dot\tau
=0.
    \end{cases}
\end{align*}
The tangent equation then gives $D_x\Phi\,\dot x=0$.
Hence $\Ker( D\pi_R)$ is isomorphic to the kernel of
\[
\Bphi(x,y,\tau)=
\begin{bmatrix}
0&D_x\Phi\\
D_y\Phi^T&D_yD_x(\tau\cdot\Phi)
\end{bmatrix}.
\]
Similarly, $\Ker (D\pi_L)$ is isomorphic to
$\Ker(\Bphi^T)$. Consequently,
\begin{align*}
\dim\Ker( D\pi_R)
&=p+d_X-\rank(\Bphi),\\
\dim\Ker (D\pi_L)
&=p+d_Y-\rank(\Bphi).
\end{align*}

Suppose that $\rank\Bphi(x,y,\tau)\geq  r$ uniformly
on $\Lambda_\bt$, where
$2p\leq  r\leq  p+d_{\min}$ and
$d_{\min}=\min\{d_X,d_Y\}$.
Writing
\[
q_\Phi=p+d_{\min}-\rank\Bphi,
\]
we have $0\leq  q_\Phi\leq  p+d_{\min}-r$.

Recall that the effective order of $\calR_{\bt}$ is $m_{\mathrm{eff}}=\frac{p-d_{\min}}{2}$. We define 
\begin{align*}
    \alpha_r=-m_{\mathrm{eff}}
-\frac{p+d_{\min}-r}{2}
=\frac{r-2p}{2}.
\end{align*}
Since the spatial projections are submersions, 
Theorem~\ref{thm: FIO_standard}(ii) gives
\begin{equation}\label{eq:PS-rank-smoothing}
\calR_{\bt}:L_\sigma^2(X)
\longrightarrow L_{\sigma+\alpha_r}^2(Y),
\qquad \sigma\in\R,
\end{equation}
uniformly for $\bt$ in each fixed compact parameter range, after compact localization of the amplitude.

Let $\mu$ and $\lambda$ be Frostman probability measures supported on
$A$ and on a compact set $C\subset Y$, respectively,
with Frostman exponents $s_\mu$ and $s_\lambda$,
$I_{s_\mu}(\mu)<\infty$ and $I_{s_\lambda}(\lambda)<\infty$. Define $ \nu_y=(\Phi_y)_*\mu$
and put
\[
 \eta
 =\alpha_r
 +\frac{s_\mu-d_X}{2}
 +\frac{s_\lambda-d_Y}{2}.
\]
Let $0<u<p$. Applying Remark~\ref{rem:averaged-sobolev}
with $\beta=(u-p)/2$ gives
\begin{equation}\label{eq:PS-average-image-energy}
 \int_C I_u(\nu_y)\,d\lambda(y)<\infty
\end{equation}
provided $\eta>(u-p)/2$.
The condition on $\eta$ is equivalent to
\begin{equation*}
 s_\mu+s_\lambda >d_X+d_Y+p-r+u.
\end{equation*}

We now prove \textup{(i)}. Assume that $\dim_HA=s\in (0,p)$, and for each $u\in (0,s)$, we define
\[
 E_u=\{y\in Y:\dim_H\Phi_y(A)\leq  u\}.
\]
Suppose, to the contrary, that $\dim_HE_u>d_X+d_Y+p-r-s+u$.
Then choose a compact $C\subset E_u$ with $\dim_HC> d_X+d_Y+p-r-s+u$. By Frostman's lemma, the measures $\mu$ and $\lambda$ can be chosen to be Frostman measures with exponents
$s_\mu<s$, $s_\lambda<\dim_HC$, and a number
$\zeta\in(u,p)$ sufficiently close to $u$ such that
\[
 s_\mu+s_\lambda
 >d_X+d_Y+p-r+\zeta.
\]
It follows from \eqref{eq:PS-average-image-energy} that
$I_\zeta(\nu_y)<\infty$ for $\lambda$-a.e.\ $y$. Hence
\[
 \dim_H\Phi_y(A)
 \geq \dim_H(\supp\nu_y)
 \geq  \zeta>u
\]
for $\lambda$-a.e.\ $y\in C$, contradicting
$C\subset E_u$. This proves \eqref{eq:PS-rank-dimension}.

The proofs of \textup{(ii)} and \textup{(iii)} are similar
to that of \textup{(i)}. Indeed, if either exceptional set
had Hausdorff dimension greater than the asserted bound,
we could choose Frostman measures $\mu$ and $\lambda$ such
that $\eta>0$ in \textup{(ii)}, or $\eta>p/2$ in
\textup{(iii)}. Proposition~\ref{prop: pinned_configuration}
would then imply that $\Phi_y(A)$ has positive Lebesgue
measure or nonempty interior, respectively, for
$\lambda$-almost every $y$ in the corresponding exceptional
set. Both conclusions give a contradiction.

Finally, since
\[
\kappa_r
=\dim_H A+\dim_H B-d_X-d_Y+2\alpha_r>0,
\]
part \textup{(iv)} follows from
Theorem~\ref{thm: pinned_slicing} and
\eqref{eq:PS-rank-smoothing}. This finishes the proof of the theorem.
\end{proof}
The following example shows that Theorem~\ref{thm:PS-rank-threshold}
can give nontrivial pinned configuration estimates even when
Peres--Schlag transversality fails locally and the associated
canonical relation is degenerate. In this example, the rank
of $\Bphi$ is constant and strictly less than its maximal
possible value.

\begin{example}
Let $d\geq 4$, $2\leq  k\leq  d-2$, and
\[
\Phi(x,y)=x_d-y_d-\sum_{j=1}^k x_jy_j,
\qquad x,y\in\R^d.
\]
Both partial differentials have rank one, since
$\partial_{x_d}\Phi=1$ and $\partial_{y_d}\Phi=-1$.
If $x'=x+ae_{k+1}$, $a\ne0$, then
\[
\Phi(x,y)=\Phi(x',y),
\qquad
D_y\Phi(x,y)=D_y\Phi(x',y)
\]
for every $y$. Hence Proposition~\ref{prop:PS-offdiag}
shows that Peres--Schlag transversality fails on every
input set containing such a pair, for every
$0\leq \beta<1$. In particular, such pairs occur in
every nonempty open input set.

To compute the rank of $\Bphi$, observe that
\[
D_x\Phi=(-y_1,\ldots,-y_k,0,\ldots,0,1),\quad
D_y\Phi=(-x_1,\ldots,-x_k,0,\ldots,0,-1),
\]
and
\[
D_yD_x(\tau\cdot\Phi)
=-\tau\operatorname{diag}(I_k,0_{d-k}).
\]
Since $\tau\ne0$, the equation
\[
\Bphi(x,y,\tau)\binom{a}{v}=0,
\qquad a\in\R,\quad v\in\R^d,
\]
implies $a=0,v_1=\cdots=v_k=v_d=0$.
The remaining $d-1-k$ components of $v$ are arbitrary.
Hence
\[
\rank(\Bphi)=k+2,
\qquad
q_\Phi=d-1-k>0.
\]

By Theorem~\ref{thm:PS-rank-threshold}, whenever
\[
\dim_H A+\dim_H B>2d-k,
\]
there exists
$y\in B$ such that $\calL^1(\Phi_y(A))>0$.
Moreover, whenever
\[
\dim_H A+\dim_H B>2d+1-k,
\]
the image set $\Phi_y(A)$ has nonempty interior for some
$y\in B$.
Since $k\geq 2$, both thresholds are strictly less than $2d$.

\end{example}

\section{Dimensions of slices}\label{sec: slicing_sets}
In this section we prove Theorem \ref{thm: pinned_slicing}. The argument follows the
slicing measure method of Iosevich and Liu \cite{Iosevich_Liu_pinned_distance_2019},
adapted to the FIO estimate \eqref{eq: Rt_smoothing_condition_intro}.

Assume that $A\subset X$ and $B\subset Y$ are compact sets such that 
\begin{align*}
    \dim_HA+\dim_HB>d_X+d_Y-2\alpha_\Phi.
\end{align*}
Let $\mu$ and $\lambda$ be Frostman probability measures supported on $A$ and $B$, with exponents $s_\mu$ and $s_\lambda$, respectively, with
\[ 0<s_\mu<\dim_HA, \qquad 0<s_\lambda<\dim_HB.
\]
We further assume that $I_{s_\mu}(\mu)<\infty$ and $I_{s_\lambda}(\lambda)<\infty$.

Recall from Proposition \ref{prop: pinned_configuration} that we define
\begin{align}\label{eq: eta}
    \eta=\alpha_\Phi+\frac{s_\mu-d_X}{2}+\frac{s_\lambda-d_Y}{2}.
\end{align}
We assume throughout this section that $\eta>0$.

By Proposition \ref{prop: pinned_configuration}, for $\lambda$-a.e. $y$, the pinned measure $\nu_y=(\Phi(\cdot,y))_\#\mu$ has an $L^2$ density $d\nu_y(\bt)=h_y(\bt)\,d\bt$.
The disintegration theorem \cite[Theorem 5.3.1]{Ambrosio_2008gradient} gives conditional probability measures $P_{y,\bt}$ supported on $A\cap\{x:\Phi(x,y)=\bt\}$ for $\nu_y$-a.e. $\bt$.
Define the weighted slicing measures by
\begin{align*}
\mu_{y,\bt}=h_y(\bt)P_{y,\bt},
\end{align*}
and set $\mu_{y,\bt}=0$ when $h_y(\bt)=0$. Then
\begin{align}\label{eq: slice_measure_support}
\supp\mu_{y,\bt} \subset A\cap\{x:\Phi(x,y)=\bt\}, \qquad \mu_{y,\bt}(A)=h_y(\bt),
\end{align}
for $(\lambda\otimes\calL^p)$-a.e. $(y,\bt)$. Moreover, for $\lambda$-a.e. $y$ and bounded Borel functions $f$ and $\varphi$,
\begin{align}\label{eq: slice_disintegration}
\int_{\R^p}\varphi(\bt) \left(\int_A f(x)\,d\mu_{y,\bt}(x)\right)d\bt= \int_A f(x)\varphi(\Phi(x,y))\,d\mu(x).
\end{align}
We choose the densities and conditional probabilities jointly measurably by disintegrating $\lambda(dy)\mu(dx)$ under $(y,x)\mapsto(y,\Phi(x,y))$; its joint pushforward has density $h_y(\bt)$ with respect to $\lambda(dy)\,d\bt$. This choice allows us to use Tonelli later.
This construction is analogous to slicing measures by affine planes; see \cite[Chapter 10]{Mattila_1995}. For slicing measures associated with nonlinear level sets, see \cite[Section 3]{Iosevich_Liu_pinned_distance_2019}.

 We have the following averaged energy estimate.
\begin{lemma}\label{lem: slice_measure_energy}
Suppose $\eta>0$. Then, for every $0<\sigma<2\eta$, one has
\begin{align}\label{eq: slice_measure_energy}
\int_B\int_{\R^p} I_\sigma(\mu_{y,\bt})\,d\bt\,d\lambda(y)<\infty.
\end{align}
\end{lemma}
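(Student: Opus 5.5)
We will write the energy of the slice measures in Fourier form and reduce it to the averaged Sobolev estimate of Proposition~\ref{prop: pinned_configuration}, following the Iosevich--Liu slicing argument. The identity \eqref{eq: slice_disintegration} formally gives, for each fixed $\xi\in\R^{d_X}$,
\[
\widehat{\mu_{y,\cdot}}(\xi)(\bt) := \int e^{-2\pi i x\cdot\xi}\,d\mu_{y,\bt}(x) = \text{the density of } (\Phi_y)_\#\bigl(e^{-2\pi i x\cdot\xi}\mu\bigr).
\]
Here $\widehat{\mu_{y,\cdot}}(\xi)$ is a function of $\bt$, namely $\calR_\bt(e_{-\xi}\mu)(y)$ with $e_{-\xi}(x)=e^{-2\pi i x\cdot\xi}$. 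Using $I_\sigma(\mu_{y,\bt})=c\int|\widehat{\mu_{y,\bt}}(\xi)|^2|\xi|^{\sigma-d_X}\,d\xi$ and Tonelli (justified by the joint measurable choice of the disintegration), the left side of \eqref{eq: slice_measure_energy} becomes
\[
c\int_{\R^{d_X}}|\xi|^{\sigma-d_X}\int_B\int_{\R^p}\bigl|\calR_\bt(e_{-\xi}\mu)(y)\bigr|^2\,d\bt\,d\lambda(y)\,d\xi .
\]
To make this rigorous, I would first mollify $\mu$ at scale $\delta$, or pair against test functions $\varphi(\bt)$, and pass to the limit by Fatou at the end.

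The key step is to bound the inner $L^2(d\lambda\,d\bt)$ norm by a quantity decaying like $|\xi|^{-2\eta+\epsilon}$. I would reuse the Littlewood--Paley decomposition from the proof of Proposition~\ref{prop: pinned_configuration}, applied now to the modulated measure $e_{-\xi}\mu$. Its $j$-th piece is $P_j(e_{-\xi}\mu)$, whose Fourier transform is $\widehat\mu(\zeta+\xi)\psi_j(\zeta)$. The bound \eqref{eq: g_j_L2_bound} does not depend on the specific measure beyond the Frostman estimate in the output variable. So, with $g_j^\xi(y,\bt)=\calR_\bt P_j(e_{-\xi}\mu)(y)$, it gives
\[
\|g_j^\xi\|_{L^2(d\lambda\,d\bt)}\lesssim 2^{j(\frac{d_Y-s_\lambda}{2}-\alpha_\Phi)}\|\psi_j\,\widehat\mu(\cdot+\xi)\|_{L^2}.
\]
This is combined with the off-diagonal decay of Lemma~\ref{lem: input_output_frequency} for the output frequencies.

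Summing over $j$ by Cauchy--Schwarz with a small loss $2^{j\epsilon}$, and then integrating against $|\xi|^{\sigma-d_X}$, leads to
\[
\sum_j 2^{j(d_Y-s_\lambda-2\alpha_\Phi+\epsilon)}\int_{|\zeta|\approx 2^j}\int|\widehat\mu(\zeta+\xi)|^2|\xi|^{\sigma-d_X}\,d\xi\,d\zeta .
\]
Substituting $w=\zeta+\xi$, the $\zeta$-integral over the annulus of the weight $|w-\zeta|^{\sigma-d_X}$ is $\lesssim 2^{j\sigma}$ when $|w|\lesssim 2^j$, and $\lesssim 2^{jd_X}|w|^{\sigma-d_X}$ when $|w|\gg 2^j$. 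This reduces everything to $\int|\widehat\mu(w)|^2\,m(w)\,dw$ with
\[
m(w)\lesssim \sum_{2^j\gtrsim|w|}2^{j(d_Y-s_\lambda-2\alpha_\Phi+\sigma+\epsilon)}+\sum_{2^j\ll|w|}2^{j(d_Y-s_\lambda-2\alpha_\Phi+d_X+\epsilon)}|w|^{\sigma-d_X}.
\]
The condition $\sigma<2\eta$ means $d_Y-s_\lambda-2\alpha_\Phi+\sigma<s_\mu-d_X$, which is negative, so the first sum converges and both sums are $\lesssim|w|^{s_\mu-d_X-\epsilon'}$. This is integrable against $|\widehat\mu|^2$ by $I_{s_\mu-\epsilon'}(\mu)<\infty$, which follows from the Frostman condition.

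The main obstacle I expect is the region where $|\xi|$ is large relative to $2^j$, together with the interchange of the modulation and $\calR_\bt$. The modulated measure is no longer Frostman-controlled in the same way at low frequency, so the estimate \eqref{eq: g_j_L2_bound} has to be written in terms of $\|P_j(e_{-\xi}\mu)\|_{L^2}$ only, without invoking Lemma~\ref{lem: Frostman_LP}(i). I would check carefully that the proof of \eqref{eq: g_j_L2_bound} indeed uses only the smoothing hypothesis \eqref{eq: smoothing_condition}, Lemma~\ref{lem: Frostman_LP}(ii) for $\lambda$, and Lemma~\ref{lem: input_output_frequency}, all of which apply to arbitrary $L^2$ inputs. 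A secondary technical point is making the identification $\widehat{\mu_{y,\cdot}}(\xi)=\calR_\bt(e_{-\xi}\mu)(y)$ precise for $\lambda\otimes\calL^p$-a.e. $(y,\bt)$, which I would handle by the mollification and Fatou limit mentioned above.
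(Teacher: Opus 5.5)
Your proposal is correct and follows essentially the paper's argument. Both proofs use the Fourier form of the energy with Tonelli, the identification $\widehat{\mu_{y,\bt}}(\xi)=\calR_\bt\mu_\xi(y)$ with $d\mu_\xi=e^{-2\pi i x\cdot\xi}\,d\mu$, a bound of the $L^2(d\lambda\,d\bt)$ norm by a negative-order Sobolev norm of $\mu_\xi$ that uses only the smoothing hypothesis and the Frostman trace bound for $\lambda$, and the substitution $\omega=\zeta+\xi$.

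The only difference is bookkeeping. The paper packages your dyadic bound into the single estimate $\|\calR_\bt f\|_{L^2(d\lambda)}\lesssim\|f\|_{L^2_{-a_0}}$ with $a_0=\alpha_\Phi-\frac{d_Y-s_\lambda}{2}-\delta$. It then evaluates the resulting weight via the Riesz-kernel identity $\int|\xi|^{\sigma-d_X}|\omega-\xi|^{-2a_0}\,d\xi=C|\omega|^{\sigma-2a_0}$, whereas you compute the same weight $m(w)$ by dyadic splitting.
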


\begin{proof}
Fix $0<\sigma<2\eta$. Our goal is to show that 
\begin{align*}
    \mathcal{J}=\int_B\int_{\R^p} I_\sigma(\mu_{y,\bt})\,d\bt\,d\lambda(y)<\infty.
\end{align*}
To show this, write
\begin{align*}
   \mathcal{J}= \int_B\int_{\R^p} I_\sigma(\mu_{y,\bt})\,d\bt\,d\lambda(y)=c(d_X,\sigma)\int_B\int_{\R^p} \left(\int_{\R^{d_X}}\vert \widehat{\mu_{y,\bt}}(\xi)|^2|\xi|^{\sigma-d_X}\,d\xi\right)\,d\bt\,d\lambda(y).
\end{align*}
Applying Tonelli's theorem, we can write
\begin{align}\label{eq: energy_xi}
    \mathcal{J}=c(d_X,\sigma)\int_{\R^{d_X}}|\xi|^{\sigma-d_X}\left[\int_B\int_{\R^p} \vert \widehat{\mu_{y,\bt}}(\xi)|^2\,d\bt\,d\lambda(y)\right]\,d\xi.
\end{align}
For $\xi\in\R^{d_X}$, define the complex measure $d\mu_\xi(x) =e^{-2\pi i x\cdot\xi}\,d\mu(x)$.
From \eqref{eq: slice_disintegration}, one has, in the sense of distributions,
\begin{align*}
\widehat{\mu_{y,\bt}}(\xi) = \calR_\bt\mu_\xi(y).
\end{align*}
We next give an estimate for \[\int_{\R^p}\Vert\calR_\bt\mu_\xi\Vert_{L^2(\lambda)}^2\,d\bt.\]
Choose $\delta>0$ sufficiently small so that $\sigma+2\delta<2\eta$, and set
\begin{align}\label{eq: parameters_r_a_0}
    r=\frac{d_Y-s_\lambda}{2}+\delta,\qquad a_0=\alpha_\Phi-r=\alpha_\Phi-\frac{d_Y-s_\lambda}{2}-\delta.
\end{align}
By \eqref{eq: eta} and the choice of $\delta$, one has \[2a_0=d_X-s_\mu+2\eta-2\delta>d_X-s_\mu.\]
By Lemma \ref{lem: Frostman_LP} (ii) and the Littlewood--Paley decomposition, for a smooth function $h$,
\begin{align} \label{eq:trace-lambda}
\Vert h\Vert_{L^2(d\lambda)} \lesssim_\delta \Vert h\Vert_{L^2_r(Y)}.
\end{align}
Combining \eqref{eq:trace-lambda} with the Sobolev estimate for
$\calR_\bt$ \eqref{eq: Rt_smoothing_condition_intro} one has, uniformly for $\bt\in W$,
\begin{align}\label{eq: Rt_estimate}
\Vert\calR_\bt f\Vert_{L^2(d\lambda)} \lesssim \Vert \calR_\bt f\Vert_{L^2_r(Y)}
\lesssim \Vert f\Vert_{L^2_{r-\alpha_\Phi}(X)} = \Vert f\Vert_{L^2_{-a_0}(X)}.
\end{align}
We extend $\calR_\bt$ to a bounded operator $\calR_\bt: L_{-a_0}^2(X)\to L^2(d\lambda)$.

Observe that, for every fixed $\xi$, $\widehat{\mu_\xi}(\zeta)=\widehat{\mu}(\xi+\zeta)$. Using the fact that $\langle\omega\rangle\lesssim \langle\omega-\xi\rangle\langle \xi\rangle$, and  $2a_0>d_X-s_\mu$, one has
\begin{align*}
    \Vert \mu_\xi\Vert_{L^2_{-a_0}}^2
    &=\int \vert \widehat\mu (\zeta+\xi)|^2\langle \zeta\rangle^{-2a_0}\,d\zeta\\
    &=\int |\widehat{\mu}(\omega)|^2\langle \omega-\xi\rangle^{-2a_0}\,d\omega\\
    &\lesssim \langle \xi\rangle^{2a_0}\int \vert \widehat{\mu}(\omega)|^2\langle\omega\rangle^{-2a_0}\,d\omega\\
    &\lesssim \langle \xi\rangle^{2a_0}I_{s_\mu}(\mu)<\infty.
\end{align*}
Thus, for every fixed $\xi$, $\mu_\xi\in L^2_{-a_0}(X)$, and hence, from \eqref{eq: Rt_estimate}, one has
\begin{align*}
\Vert\calR_\bt\mu_\xi\Vert_{L^2(d\lambda)} \lesssim
\Vert\mu_\xi\Vert_{L^2_{-a_0}(X)},
\end{align*}
uniformly for $\bt\in W$. As a function of $\bt$, $\calR_\bt\mu_\xi$ vanishes outside the fixed bounded set $\Phi(\supp\chi)\subset W$, squaring and integrating in $\bt$ yields
\begin{align*}
\int_{\R^p} \Vert \calR_\bt\mu_\xi\Vert _{L^2(d\lambda)}^2\,d\bt \lesssim \Vert \mu_\xi\Vert _{L^2_{-a_0}(X)}^2.
\end{align*}
Therefore, for $\xi\in \R^{d_X}$, 
\begin{align*}
    \int_B\int_{\R^p} \vert \widehat{\mu_{y,\bt}}(\xi)|^2\,d\bt\,d\lambda(y) \lesssim \Vert\mu_\xi\Vert_{L^2_{-a_0}(X)}^2.
\end{align*}
Substituting this bound into \eqref{eq: energy_xi}, we obtain
\begin{align*}
\mathcal{J}\lesssim \int_{\R^{d_X}} |\xi|^{\sigma-d_X} \Vert\mu_\xi\Vert_{L^2_{-a_0}(X)}^2\,d\xi=\int_{\R^{d_X}}|\xi|^{\sigma-d_X}\int_{\R^{d_X}}\vert\widehat{\mu_\xi}(\zeta)|^2\langle \zeta\rangle^{-2a_0}\,d\zeta \,d\xi.
\end{align*}
Since $\widehat{\mu_\xi}(\zeta)= \widehat\mu(\zeta+\xi)$, by Tonelli and the change of variables $\omega=\zeta+\xi$, the expression on the right equals
\begin{align}\label{eq: energy_mu_omega}
\int_{\R^{d_X}} |\widehat\mu(\omega)|^2
\left( \int_{\R^{d_X}} |\xi|^{\sigma-d_X} \langle\omega-\xi\rangle^{-2a_0}\,d\xi \right)d\omega.
\end{align}
From the choice of $a_0$ in
\eqref{eq: parameters_r_a_0}, we have
\[
0<\sigma<2a_0<d_X.
\]
Indeed, $\sigma+2\delta<2\eta$ implies $\sigma<2a_0$, while
\[
2a_0=2\alpha_\Phi-(d_Y-s_\lambda)-2\delta
<2\alpha_\Phi\leq\min\{d_X,d_Y\}-p<d_X.
\]
These inequalities check both the local integrability and the integrability
at infinity required for the convolution below.

By the classical Riesz-kernel convolution formula,
which follows from
\cite[Theorem~3.6]{Mattila2015},
\[
\int_{\R^{d_X}}
|\xi|^{\sigma-d_X}
|\omega-\xi|^{-2a_0}\,d\xi
=C|\omega|^{\sigma-2a_0},
\qquad \omega\neq0.
\]
Since
$\langle\omega-\xi\rangle^{-2a_0}
\leq|\omega-\xi|^{-2a_0}$,
it follows from \eqref{eq: energy_mu_omega} that
\[
\mathcal J
\lesssim
\int_{\R^{d_X}}
|\widehat\mu(\omega)|^2
|\omega|^{\sigma-2a_0}\,d\omega.
\]
Moreover, since $\sigma+2\delta<2\eta$, one has
\[
\sigma-2a_0
=s_\mu-d_X+(\sigma-2\eta+2\delta)
<s_\mu-d_X.
\]
The integral over $|\omega|\geq1$ is therefore
bounded by a constant multiple of
$I_{s_\mu}(\mu)$. The integral over
$|\omega|<1$ is finite since
$|\widehat\mu|\leq1$ and
$\sigma-2a_0>-d_X$.
Consequently,
\[
\mathcal J\lesssim
1+I_{s_\mu}(\mu)<\infty.
\]
This proves \eqref{eq: slice_measure_energy} and completes the proof of the lemma.
\end{proof}

\begin{proof}[Proof of Theorem \ref{thm: pinned_slicing}]
Let $A\subset X$, and $B\subset Y$ be compact sets of positive Hausdorff dimensions. Set $ s_A=\dim_HA, s_B=\dim_HB$, and assume that 
\[ \kappa = s_A+s_B-d_X-d_Y+2\alpha_\Phi>0. \]
Fix $0<\e<\kappa$ and choose
\[ 0<s_\mu<s_A, \qquad 0<s_\lambda<s_B
\]
sufficiently close to $s_A,s_B$ that
\begin{align*}
2\eta=s_\mu+s_\lambda-d_X-d_Y+2\alpha_\Phi>\kappa-\e>0.
\end{align*}
Choose Frostman probability measures $\mu$ on $A$ and $\lambda_\e$ on $B$ with Frostman exponents $s_\mu$ and $s_\lambda$, respectively, such that
\[ I_{s_\mu}(\mu)<\infty,\qquad I_{s_\lambda}(\lambda_\e)<\infty. \]
Proposition \ref{prop: pinned_configuration} gives, for $\lambda_\e$-a.e.  $y$, the pushforward measure $\nu_y=(\Phi_{y})_\#\mu$ has an $L^2$ density $h_y$. In particular,
\[ \int_{\R^p}h_y(\bt)\,d\bt=1,\]
and the set $E_y:=\{\bt\in\Delta_\Phi^y(A):h_y(\bt)>0\}$ has positive Lebesgue measure.

Choose $\kappa-\e<\sigma<2\eta$. Then by Lemma \ref{lem: slice_measure_energy} and Fubini, for $\lambda_\e$-a.e. $y$ and $\calL^p$-a.e.
$\bt$,
\[ I_\sigma(\mu_{y,\bt})<\infty. \]
For a.e. $\bt \in E_y$, the slicing measure $\mu_{y,\bt}$ is nonzero by \eqref{eq: slice_measure_support}, and hence the energy criterion gives
\[ \dim_H(\supp\mu_{y,\bt})\geq \sigma. \]
Since $\supp\mu_{y,\bt}\subset A\cap\{x:\Phi(x,y)=\bt\}$, we obtain
\[\dim_H\{x\in A:\Phi(x,y)=\bt\}\geq \sigma>\kappa-\e\] for a.e.  $\bt\in E_y$.

Therefore
\[\calL^p\left(\left\{\bt\in\R^p:\dim_H\{x\in A:\Phi(x,y)=\bt\}\geq \kappa-\e\right\}\right)>0\]
for $\lambda_\e$-a.e. $y\in B$. Since $E_y\subset\Delta_\Phi^y(A)$, one also has
\[
\calL^p(\Delta_\Phi^y(A))>0.
\]
This completes the proof of the theorem.
\end{proof}

\section{Restricted projections without Peres-Schlag transversality}
\label{sec: projection_nontranversal_interior}

We now apply Theorem~\ref{thm: main_pinned_configuration}
to the restricted family of projections introduced by Harris.
First, we observe that the parameter derivatives $D_v\pi_v$ need not have rank two,
so the family does not directly satisfy the submersion
conditions in \eqref{eq: rank_condition}.
To overcome this, we introduce an auxiliary translation
parameter $r\in\R^2$ and define
\begin{equation*}
\Phi_H(x,(v,r))=\pi_v(x)-r,
\qquad
x\in\R^{d+1},\quad
(v,r)\in S^{d-1}_+\times\R^2.
\end{equation*}
Since $D_r\Phi_H=-I_2$, we have $\rank D_{(v,r)}\Phi_H=2$
everywhere. Moreover, for every $E\subset\R^{d+1}$,
\[
\Delta_{\Phi_H}^{(v,r)}(E)=\pi_v(E)-r.
\]
Thus the additional parameter does not affect whether
the projection has positive Lebesgue measure or nonempty
interior.

Using the chart $v(q)=(q,\sqrt{1-|q|^2})$, $q\in B^{d-1}(0,1)$, we identify $Y$ with the open subset $B^{d-1}(0,1)\times\R^2$ of $\R^{d+1}$.
This identification preserves Hausdorff dimension on compact subsets. We therefore take $X=\R^{d+1}$, and $Y=S^{d-1}_+\times\R^2$ for convenience.
Since $\dim S^{d-1}_+=d-1$, we have
\begin{align*}
d_X=d_Y=d+1,
\qquad
p=2.
\end{align*}
Thus $\Phi_H: X\times Y\to \R^2$ is a smooth map satisfying
\begin{align*}
    \rank D_x\Phi_H=2, \quad \rank D_{v,r}\Phi_H=2,
\end{align*}
everywhere.
The remaining input we need to apply Theorem \ref{thm: main_pinned_configuration} is the Sobolev regularity of the associated generalized Radon transforms
\begin{align*}
    \calR_\bt^Hf(y)=\int\chi(x,y)f(x)\,d\sigma_{y,\bt}(x),
\end{align*}
where $\sigma_{y,\bt}$ is the Leray measure on $\{x: \Phi_H(x,y)=\bt\}$, $\bt\in \R^2$. Here $\chi$ is a smooth cutoff function with compact support. Let $W\subset \R^2$ be a bounded open set such that $\Phi_H(\supp\chi)\subset\subset W$.

For fixed $\bt\in\R^2$, the incidence relation is $Z_{\bt}=\{(x,v,r):\Phi_H(x,(v,r))=\bt\}$. Consider the phase
\begin{align*}
\phi_{\bt}(x,v,r,\tau) = \tau\cdot\bigl(\Phi_H(x,(v,r))-\bt\bigr),
\qquad \tau\in\R^2\setminus\bzero,
\end{align*}
the associated canonical relation is
\begin{align}\label{eq: Harris_canonical_relation}
\Lambda_\bt^H:=\left\{\left(x,d_x\phi_{\bt};(v,r),-d_{(v,r)}\phi_{\bt}\right): d_\tau\phi_\bt=0\right\}\subset (T^\ast X\setminus \bzero)\times (T^\ast Y\setminus \bzero).
\end{align}

\begin{lemma}
\label{lem:Harris-smoothing}
The canonical relation $\Lambda_{\bt}^{H}$ associated with $\calR_\bt^H$ has a one-sided Whitney fold.
Consequently, the associated generalized Radon transforms satisfy
\begin{align*}
\mathcal R_{\bt}^{H}: L^2_s(\R^{d+1}) \longrightarrow L^2_{s+\alpha_{\Phi_H}}(S^{d-1}_+\times\R^2), \qquad s\in\R,
\end{align*}
uniformly for $\bt\in W$, where
\begin{align*}
\alpha_{\Phi_H} = \frac{d-1}{2}-\frac14 = \frac{2d-3}{4}.
\end{align*}
\end{lemma}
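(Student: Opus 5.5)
The plan is to compute the cotangent projections of $\Lambda_\bt^H$ explicitly and check that one of them is a local diffeomorphism or a Whitney fold. Then Theorem~\ref{thm:one-sided-fold} gives the estimate, since $d_X=d_Y=d+1$. The order of $\calR_\bt^H$ is $m=\frac p2-\frac{d_X+d_Y}{4}=1-\frac{d+1}{2}=-\frac{d-1}{2}$. Theorem~\ref{thm:one-sided-fold} then yields $L^2_s\to L^2_{s-m-1/4}=L^2_{s+\frac{d-1}{2}-\frac14}$, which is exactly $\alpha_{\Phi_H}=\frac{2d-3}{4}$. Uniformity in $\bt\in W$ follows because the phase $\tau\cdot(\pi_v(x)-r-\bt)$ depends smoothly on $\bt$. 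The operator norms depend on finitely many derivatives of phase and amplitude, and the fold structure is stable under small perturbations of $\bt$, so compactness of $\overline W$ finishes that point.

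For the geometry, write $\tau=(\tau_1,\tau_2)$ and set
\[
e_1(v)=\tfrac1{\sqrt2}(v,-1),\qquad e_2(v)=(Av,0),\qquad w(v,\tau)=\tau_1e_1(v)+\tau_2e_2(v)\neq 0 .
\]
Then $\Lambda_\bt^H$ consists of the points
\[
\bigl(x,\,w(v,\tau);\,(v,r),\,(-\nabla_v(\tau\cdot\pi_v(x)),\tau)\bigr),\qquad \pi_v(x)-r=\bt .
\]
Since $r=\pi_v(x)-\bt$, we can use $(x,v,\tau)$ as coordinates on $\Lambda_\bt^H$, which has dimension $(d+1)+(d-1)+2=2d+2$.

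The left projection is $\pi_L(x,v,\tau)=(x,w(v,\tau))$. Its rank drop is governed by the map $(v,\tau)\mapsto w(v,\tau)=\bigl(\tfrac{\tau_1}{\sqrt2}v+\tau_2Av,\,-\tfrac{\tau_1}{\sqrt2}\bigr)$, going from a $(d+1)$-dimensional space to $\R^{d+1}$. I expect this map to be where the analysis concentrates. Its differential in $v$ restricted to $T_vS^{d-1}$ is $\dot v\mapsto(\tfrac{\tau_1}{\sqrt2}\dot v+\tau_2A\dot v,0)$, and its differential in $\tau$ is spanned by $e_1,e_2$. Because $A$ is skew and orthogonal, the operator $\tfrac{\tau_1}{\sqrt2}I+\tau_2A$ is invertible whenever $\tau\neq0$; its eigenvalues are $\tfrac{\tau_1}{\sqrt2}\pm i\tau_2$. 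So degeneracy can only come from the image of $T_vS^{d-1}$ failing to be complementary to $\mathrm{span}\{v,Av\}$ after the projection.

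I expect corank at most one, occurring on the hypersurface $\{\tau_1=0\}$, where the tangent directions $\dot v\perp v$ can map into $\mathrm{span}\{Av\}$ (namely $\dot v=A^{-1}Av$-type directions; note $Av\perp v$). The fold conditions to verify there are:
\begin{enumerate}[label=(\alph*)]
\item the determinant $\det D\pi_L$ vanishes simply on $\Sigma=\{\tau_1=0\}$;
\item the kernel of $D\pi_L$ is transversal to $\Sigma$.
\end{enumerate}
I would compute $\det D\pi_L$ as a multiple of $\tau_1\cdot(\text{nonvanishing factor})$, up to the homogeneous factor $|\tau|^{d-2}$. Then I would check that the one-dimensional kernel contains a $\partial_{\tau_1}$ component, which gives (b).

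The main obstacle is step (b), that is, verifying transversality of the kernel to the singular set rather than only identifying $\Sigma$. If $\pi_L$ turns out to be more degenerate than a fold, I would instead apply the same computation to $\pi_R$. Since $\corank D\pi_L=\corank D\pi_R$ but the fold condition can hold on one side only, it suffices to establish it on either side. Here $\pi_R(x,v,\tau)=(v,\pi_v(x)-\bt,-\nabla_v(\tau\cdot\pi_v(x)),\tau)$, and its degeneracy reduces to the rank of $x\mapsto\nabla_v(\tau\cdot\pi_v(x))$ restricted to $\{\pi_v(x)=\text{const}\}$. That map is linear in $x$ and explicit in terms of $A$, so it may make the fold check more transparent than the left-side computation.
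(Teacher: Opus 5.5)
Your overall architecture matches the paper's. You compute $m=-\frac{d-1}{2}$, reduce $\pi_L$ to the map $F(v,\tau)=\bigl(\frac{\tau_1}{\sqrt2}v+\tau_2Av,-\frac{\tau_1}{\sqrt2}\bigr)$, verify the Whitney fold conditions, and invoke Theorem~\ref{thm:one-sided-fold}. However, your central geometric prediction is wrong, so steps (a) and (b) as you formulate them would fail.

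\textbf{The singular set is $\{\tau_2=0\}$, not $\{\tau_1=0\}$.} A kernel vector $(u,\dot\tau_1,\dot\tau_2)$ of $DF$, with $u\in T_vS^{d-1}$, must have $\dot\tau_1=0$, since the last coordinate of $DF$ is $-\dot\tau_1/\sqrt2$. It must also satisfy $Mu=-\dot\tau_2Av$ with $M=\frac{\tau_1}{\sqrt2}I+\tau_2A$. Inverting $M$ using $A^2=-I$ gives $u=-\dot\tau_2\bigl(\frac{\tau_1}{\sqrt2}Av+\tau_2v\bigr)/\bigl(\frac{\tau_1^2}{2}+\tau_2^2\bigr)$. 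The tangency constraint $u\cdot v=0$ then becomes $\tau_2\dot\tau_2=0$.

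On $\{\tau_1=0\}$ you get $\tau_2\neq0$ and $u=-(\dot\tau_2/\tau_2)v$, which is normal to the sphere. This forces $u=0$ and $\dot\tau_2=0$, so $\pi_L$ is a local diffeomorphism there and $\det D\pi_L$ does not vanish. Your heuristic direction ``$\dot v=A^{-1}Av$'' is just $v$, which is not tangent. The degenerate tangent direction is $\dot v=Av$, which is tangent because $Av\perp v$. It is mapped into $\mathrm{span}\{(Av,0)\}$ exactly when $M$ is a multiple of the identity, i.e.\ when $\tau_2=0$.

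\textbf{The transversality mechanism is also wrong.} The kernel can never have a $\partial_{\tau_1}$-component, because $\dot\tau_1=0$ is forced as above. So the check you planned in (b) cannot succeed. On $\Sigma=\{\tau_2=0\}$ the kernel is spanned by $(Av,0,-\tau_1/\sqrt2)$. Transversality to $\Sigma$ comes from its $\partial_{\tau_2}$-component: $d\tau_2$ evaluates to $-\tau_1/\sqrt2$, which is nonzero since $\tau_1\neq0$ on $\Sigma$.

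Correspondingly, $\det DF=\frac{\tau_2}{\sqrt2}\bigl(\frac{\tau_1^2}{2}+\tau_2^2\bigr)^{(d-2)/2}$, up to a nonvanishing factor from the sphere chart. This vanishes simply on $\{\tau_2=0\}$ and is not a multiple of $\tau_1$.

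With these corrections, the rest of your argument goes through as in the paper: the order count, the Greenleaf--Seeger application giving $\alpha_{\Phi_H}=\frac{2d-3}{4}$, and uniformity in $\bt$. The fallback to $\pi_R$ is unnecessary.
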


    \begin{proof}
In the parametrization of \eqref{eq: Harris_canonical_relation},
the left cotangent projection is
\[
\pi_L(x,v,r,\tau)
=\left(x,\left(\frac{\tau_1}{\sqrt2}v+\tau_2Av,
-\frac{\tau_1}{\sqrt2}\right)\right),
\qquad\Phi_H(x,(v,r))=\bt.
\]
Since the $x$-component is the identity, the singularities of
$\pi_L$ are determined by the map
\begin{align*}
F(v,\tau_1,\tau_2) = \left( \frac{\tau_1}{\sqrt2}v+\tau_2Av, -\frac{\tau_1}{\sqrt2} \right).
\end{align*}
Fix $v,\tau_1,\tau_2$, we describe $\Ker(DF)(v,\tau_1,\tau_2)$. Let $u\in T_vS^{d-1}$, and $(\dot\tau_1,\dot\tau_2)\in\R^2$, differentiating gives
\begin{align*}
DF(u,\dot\tau_1,\dot\tau_2) =
\left( \frac{\tau_1}{\sqrt2}u +\tau_2Au +\frac{\dot\tau_1}{\sqrt2}v +\dot\tau_2Av, -\frac{\dot\tau_1}{\sqrt2} \right).
\end{align*}
If $(u,\dot\tau_1,\dot\tau_2)\in \Ker(DF)$, then
\begin{align}\label{eq: DF=0}
    \begin{cases}
    \dot\tau_1&=0\\
    \left(\frac{1}{\sqrt{2}}\tau_1I +\tau_2A\right)u+\dot\tau_2Av&=0
\end{cases}.
\end{align}
Now observe that by the assumptions on $A$, we have $A^{-1}=A^*=-A$ and $A^2=-I$, combining with the fact that $(\tau_1,\tau_2)\neq (0,0)$, we see that $\left(\frac{1}{\sqrt{2}}\tau_1I +\tau_2A\right)$ is invertible. Indeed, using $A^2=-I$, one can check that
\begin{align*}
    \left(\frac{\tau_1}{\sqrt{2}}I +\tau_2A\right)\left(\frac{\tau_1}{\sqrt{2}}I -\tau_2A\right)=\left(\frac{\tau_1^2}{2}+\tau_2^2\right)I.
\end{align*}
Hence from \eqref{eq: DF=0}, one has
\begin{align*}
    u=-\frac{\dot\tau_2}{\frac{\tau_1^2}{2}+\tau_2^2}\left(\frac{\tau_1}{\sqrt{2}}I -\tau_2A\right)Av=-\frac{\dot\tau_2}{\frac{\tau_1^2}{2}+\tau_2^2}\left(\frac{\tau_1}{\sqrt{2}}Av +\tau_2v\right),
\end{align*}
where we used $A^2v=-v$. Recall that $u\in T_vS^{d-1}$, hence $u\cdot v=0$. Using $Av\cdot v=0$, one gets
\begin{align*}
    0=u\cdot v=-\frac{\dot\tau_2}{\frac{\tau_1^2}{2}+\tau_2^2}\left(\frac{\tau_1}{\sqrt{2}}Av +\tau_2v\right)\cdot v=-\frac{\dot\tau_2\tau_2}{\frac{\tau_1^2}{2}+\tau_2^2}.
\end{align*}
Therefore $\tau_2\dot\tau_2=0$. 
If $\tau_2\neq 0$, then $\dot\tau_2=0$, and hence $u=0$. This implies that $\Ker DF=\{\bzero\}$. If $\tau_2=0$, then $\tau_1\neq 0$, and 
\begin{align*}
    u=-\frac{\sqrt{2}\dot\tau_2}{\tau_1}Av.
\end{align*}
Therefore,
\begin{align*}
    \Ker DF=\operatorname{span}\left\{\left(Av,0,-\frac{\tau_1}{\sqrt{2}}\right)\right\}, \quad \tau_2=0.
\end{align*}
Thus the rank of $DF$, and hence of $D\pi_L$, drops by exactly one on the smooth hypersurface
\begin{align*}
\Sigma=\{\tau_2=0\}.
\end{align*}
It remains to verify the fold transversality condition. One computes
\begin{align*}
d\tau_2\left((Av,0,-\tau_1/\sqrt2)\right)=-\frac{\tau_1}{\sqrt2}\neq 0.
\end{align*}
Hence $\Ker (D\pi_L) \pitchfork T\Sigma$, i.e. the null direction is transverse to the singular hypersurface.

To see that the rank drops simply along $\Sigma$, choose an orthonormal basis $e_1,\ldots,e_{d-1}$ of $T_vS^{d-1}$, oriented so that $\det[e_1,\ldots,e_{d-1},v]=1$. Let
\begin{align*}
M=\frac{\tau_1}{\sqrt2}I+\tau_2A.
\end{align*}
Then
\begin{align*}
DF= \begin{bmatrix}
Me_1&\cdots&Me_{d-1}&v/\sqrt2&Av\\
0&\cdots&0&-1/\sqrt2&0
\end{bmatrix}.
\end{align*}
Expanding along the last row and factoring out $M$, we obtain
\begin{align*}
\det DF
&=\frac1{\sqrt2}\det[Me_1,\ldots,Me_{d-1},Av]\\
&=\frac{\det M}{\sqrt2} \det[e_1,\ldots,e_{d-1},M^{-1}Av].
\end{align*}
Since $A^2=-I$, one has
\begin{align*}
M^{-1}Av = \frac{\frac{\tau_1}{\sqrt2}Av+\tau_2v}
     {\frac{\tau_1^2}{2}+\tau_2^2}.
\end{align*}
Because $Av\in T_vS^{d-1}$, the vector $Av$ lies in the span of $e_1,\ldots,e_{d-1}$, and hence
\begin{align*}
\det[e_1,\ldots,e_{d-1},M^{-1}Av] = \frac{\tau_2}{\frac{\tau_1^2}{2}+\tau_2^2}.
\end{align*}
Moreover, in an orthonormal basis adapted to $A$, the matrix $M$ consists of $d/2$ blocks of the form
\begin{align*}
\begin{bmatrix}
\tau_1/\sqrt2&-\tau_2\\
\tau_2&\tau_1/\sqrt2
\end{bmatrix}.
\end{align*}
Thus
\begin{align*}
\det M= \left(\frac{\tau_1^2}{2}+\tau_2^2\right)^{d/2},
\end{align*}
and consequently
\begin{align*}
\det DF = \frac{\tau_2}{\sqrt2} \left(\frac{\tau_1^2}{2}+\tau_2^2\right)^{(d-2)/2}.
\end{align*}
In arbitrary local coordinates on the sphere, this expression is multiplied by a smooth nonvanishing factor. Since $\tau_1\neq0$ on $\Sigma=\{\tau_2=0\}$, the determinant vanishes to first order on this hypersurface. Thus the rank drops simply by one along $\Sigma$.
Thus $\pi_L$ has Whitney fold singularities along $\Sigma$
and is a local diffeomorphism away from $\Sigma$.
Consequently, $\Lambda_\bt^H$ is a one-sided fold
canonical relation.

We now compute the corresponding Sobolev gain $\alpha_{\Phi_H}$. 
Since the amplitude defining $\calR_{\bt}^{H}$ has order zero, its
FIO order is
\begin{align*}
m = \frac{p}{2}-\frac{d_X+d_Y}{4} = 1-\frac{2(d+1)}{4} = -\frac{d-1}{2}.
\end{align*}
Since $d_X=d_Y$, the effective order is also $m_{\mathrm{eff}}=m=-\frac{d-1}{2}$.
Since one of the canonical projections is a Whitney fold, the one-sided fold theorem of Greenleaf--Seeger, Theorem~\ref{thm:one-sided-fold}, applied to $\calR_{\bt}^{H}$ gives a loss of $1/4$ derivative. Therefore
\begin{align*}
\alpha_{\Phi_H} = -m-\frac14 = \frac{d-1}{2}-\frac14 = \frac{2d-3}{4}.
\end{align*}
Hence
\begin{align*}
\calR_{\bt}^{H}: L^2_s(\R^{d+1}) \longrightarrow L^2_{s+\alpha_{\Phi_H}} (S^{d-1}_+\times\R^2),
\qquad \alpha_{\Phi_H} = \frac{2d-3}{4},
\end{align*}
uniformly for $\bt\in W$.
\end{proof}
We now apply Theorem \ref{thm: main_pinned_configuration} to prove Theorem \ref{thm: Harris_projection_interior}.

\begin{proof}[Proof of Theorem \ref{thm: Harris_projection_interior}]
Suppose, to the contrary, that
\begin{align*}
\dim_H\left\{ v\in S^{d-1}_+: \Int(\pi_{v}(E))=\emptyset \right\} > d+\frac72-s.
\end{align*}
We may choose a compact set $\Gamma\subset\left\{ v\in S^{d-1}_+: \Int(\pi_{v}(E))=\emptyset
\right\}$ such that $\dim_H\Gamma>d+\frac{7}{2}-s$. Let $K=[0,1]^2$, and set $B=\Gamma\times K\subset S^{d-1}_+\times\R^2$.

Recall that $d_X=d_Y=d+1$, $p=2$, and $\alpha_{\Phi_H}=\frac{2d-3}{4}$ by Lemma~\ref{lem:Harris-smoothing}, hence we compute
\begin{align*}
    d_X+d_Y-2\alpha_{\Phi_H}+p=2(d+1)-d+\frac{3}{2}+2=d+\frac{11}{2}.
\end{align*}
On the other hand, we have
\begin{align*}
\dim_HE+\dim_HB>s+d+\frac{7}{2}-s+2=d+\frac{11}{2}.
\end{align*}
Therefore, $\dim_HE+\dim_HB>d_X+d_Y-2\alpha_{\Phi_H}+p$, and Theorem \ref{thm: main_pinned_configuration} yields that for some $(v,r)\in B$, 
\begin{align*}
    \Int(\Delta_{\Phi_H}^{(v,r)}(E))\neq \emptyset.
\end{align*}
This implies $\Int(\pi_v(E))\neq \emptyset$, contradicting $v\in\left\{
v\in S^{d-1}_+:
\Int(\pi_{v}(E))=\emptyset
\right\}$.

Therefore
\begin{align*}
\dim_H\left\{ v\in S^{d-1}_+: \Int(\pi_{v}(E))=\emptyset \right\}
\le d+\frac72-s.
\end{align*}
If $s>9/2$, then $d+7/2-s<d-1$. Thus the exceptional
set has zero $\mathcal H^{d-1}$-measure on the hemisphere, as claimed.
This finishes the proof of the theorem.
\end{proof}

\begin{remark}
Harris \cite{Harris2022} proved that
\begin{align*}
\mathcal L^2(\pi_{v}(E))>0
\qquad\text{for a.e. }v\in S^{d-1}_+
\end{align*}
under the sharp hypothesis $\dim_HE>2$.
By contrast, Theorem \ref{thm: main_pinned_configuration} would yield positive measure only for
$\dim_HE>5/2$.

Thus the purpose of the present application is not to recover or
improve Harris's positive measure result. Rather, Theorem \ref{thm: Harris_projection_interior} illustrates that the pinned FIO approach 
yields the stronger qualitative conclusion of nonempty interior, together with the quantitative exceptional set 
estimate \eqref{eq: Harris_projection_interior_exceptional}, for a projection family
outside of the Peres-Schlag transversality framework.
\end{remark}

\end{document}